\documentclass[11pt,a4paper,reqno]{amsart}
\usepackage[applemac]{inputenc}
\usepackage[T1]{fontenc}
\usepackage{hyperref}
\usepackage{amsmath}
\usepackage{amsthm}
\usepackage{amsfonts}
\usepackage{amssymb}
\usepackage{graphicx}
\usepackage{color}
\usepackage{amsbsy}
\usepackage{mathrsfs}
\usepackage{bbm}
\usepackage{verbatim}
\usepackage{xcolor}
\usepackage{ulem}
\usepackage{dsfont}
\addtolength{\hoffset}{-0.6cm} \addtolength{\textwidth}{1.2cm}
\addtolength{\voffset}{-0.5cm} \addtolength{\textheight}{1.0cm}
\hbadness=100000
% % For tikz pictures. \begin{picture} stuff.
%\usepackage{physics}
%\usepackage{amsmath}
%\usepackage{tikz}
%\usepackage{mathdots}
%\usepackage{yhmath}
%\usepackage{cancel}
%\usepackage{color}
%\usepackage{siunitx}
%\usepackage{array}
%\usepackage{multirow}
%\usepackage{amssymb}
%\usepackage{gensymb}
%\usepackage{tabularx}
%\usepackage{extarrows}
%\usepackage{booktabs}
%\usetikzlibrary{fadings}
%\usetikzlibrary{patterns}
%\usetikzlibrary{shadows.blur}
%\usetikzlibrary{shapes}

\usepackage{graphicx}
\usepackage{subcaption}

%ENVIRONMENTS THEOREMS...q
\newtheorem{theorem}{Theorem}[section]
\newtheorem{lemma}[theorem]{Lemma}

\newtheorem{proposition}[theorem]{Proposition}
\newtheorem{corollary}[theorem]{Corollary}
\newtheorem{definition}[theorem]{Definition}

\theoremstyle{remark}
\newtheorem{remark}[theorem]{\it \bf{Remark}\/}
\newenvironment{acknowledgement}{\noindent{\bf Acknowledgement.~}}{}
\numberwithin{equation}{section}
\catcode`@=11
\def\section{\@startsection{section}{1}%
	\z@{1.5\linespacing\@plus\linespacing}{.5\linespacing}%
	{\normalfont\bfseries\large\centering}}
\catcode`@=12
%
%%%Alias%%%%%%%%%%%%%%%%%%%%%%%%%%%%%
%%%%%%%%%%%%%%%%%%%%%%%%%%%%%%%%%%
\newcommand{\be}{\begin{equation}}
	\newcommand{\ee}{\end{equation}}
\newcommand{\bea}{\begin{eqnarray}}
	\newcommand{\eea}{\end{eqnarray}}
\newcommand{\bee}{\begin{eqnarray*}}
	\newcommand{\eee}{\end{eqnarray*}}
\def\div{{\rm div \;}}

\def\pa{\partial}

\def\na{\nabla}

\def\CC{\mathbb{C}}
\def\NN{\mathbb{N}}
\def\RR{\mathbb{R}}

\def\ep{\varepsilon}

\def\calC{{\mathcal C}}
\def\calB{{\mathcal B}}

\def\calD{{\mathcal D}}

\def\calL{{\mathcal L}}

\def\calR{{\mathcal R}}

\def\calT{{\mathcal T}}

\def\calT{{\mathcal T}}

\catcode`@=11
\def\supess{\mathop{\operator@font Sup\,ess}}
\catcode`@=12

\def\div{{\rm div \;}}

\def\CC{\mathbb{C}}

\def\NN{\mathbb{N}}

\def\RR{\mathbb{R}}
\def\CC{\mathbb{C}}

\def\PP{\mathbb{P}}

\def\a{\alpha}
\def\e{\varepsilon}

\def\bar#1{{\overline #1}}

\def\R2+{\RR ^2_+}

\def\pa{\partial}
\def\na{\nabla}

\def\lim{\mathop{\rm lim}}

\def\sup{\mathop{\rm sup}}

\def\l{\lambda}

\def\log{{\rm log}}

\def\pa{\partial}

\def\pa{\partial}
\def\la{\langle}

\def\ra{\rangle}

\def\Dein{\Delta^{-1}}
\def\Ltwo{{_{L^2}}}
\def\delg{\delta_g}
\def\tpu{\tilde{P}_u}
\def\phir{\tilde \varphi_{i}}
\def\phirj{\tilde \varphi_{j}}

\def\bbf{\textbf{f}}
\def\tepu{\tilde{\ep}_u}
\def\teps{\tilde{\ep}_s}
\def\tps{\tilde{P}_s}
%%%%%%%%%%%%%%%%%%%%%%%%%%%%%%%%%%%%%%%%%%%%%%%%%%
\begin{document}
	
	\title[]{Finite-time blowup for Keller-Segel-Navier-Stokes system in three dimensions}
	
	\author[Z. Li]{Zexing Li}
	\address{DPMMS\\Center for Mathematical Sciences\\Univeristy of Cambridge\\Wilberforce road\\
		Cambridge\\ UK}
	\email{zl486@cam.ac.uk}

 \author[T. Zhou]{Tao Zhou}
 \address{Department of Mathematics\\
Faculty of Science,
National University of Singapore\\
Singapore}
\email{zhoutao@u.nus.edu}

	\maketitle

 \begin{abstract}
     While finite-time blowup solutions have been studied in depth for the Keller-Segel equation, a fundamental model describing chemotaxis, the existence of finite-time blowup solutions to chemotaxis-fluid models remains largely unexplored. To fill this gap in the literature, we use a quantitative method to directly construct a smooth finite-time blowup solution for the Keller-Segel-Navier-Stokes system with buoyancy in $3D$. The heart of the proof is to establish the non-radial finite-codimensional stability of an explicit self-similar blowup solution to $3D$ Keller-Segel equation with the abstract semigroup tool from \cite{MR4359478}, which partially generalizes the radial stability result \cite{MR4685953} to the non-radial setting. Additionally, we introduce a robust localization argument to find blowup solutions with non-negative density and finite mass.
     
 \end{abstract}

\section{Introduction}

Chemotaxis is a ubiquitous phenomenon observed in nature, where organisms like body cells and bacteria respond directionally to the chemical substance in the external environment.
A fundamental mathematical model for chemotaxis is the Keller-Segel equation, formulated as follows:
\be
\begin{cases}
\partial_t \rho  = \Delta \rho - \na \cdot (\rho \na c), \\
-\Delta c = \rho.
\end{cases}\tag{KS}
\label{equation, Keller-Segel}
\ee
Here, $\rho$ represents the cell density, while $c$ denotes the concentration of the self-emitted chemical substance. The system describes how cells exhibit random Brownian motion, resulting in dissipation. Simultaneously, the cells are attracted by chemical substances and move to the region with higher concentrations of chemical substances. For more background on chemotaxis and related models, interested readers can refer to \cite{MR3932458, MR2013508, MR2073515}.

On the other hand, many studies consider more practical chemotaxis models that are coupled with the ambient flow, and interested readers can refer to \cite{hillesdon1995development, MR1145013, tuval2005bacterial} for further motivations regarding the introduction of fluid-chemotaxis models. These models describe that cells are not solely attracted by self-emitted chemical substances, but are also transported by the ambient flow.
%, which is influenced by the cell's reaction forces (e.g. buoyancy or friction) exerted by the fluid during the cell movement.
In particular, this paper focuses on the $3D$ coupled Keller-Segel-Navier-Stokes system with buoyancy
\be
	\begin{cases}
		\partial_t \rho + u \cdot \nabla \rho = \Delta \rho - \nabla \cdot (\rho \nabla c), \\
		-\Delta c = \rho, \\
		\partial_t u + u \cdot \nabla u = \Delta u - \nabla \pi  - \rho e_3, \\
		\nabla \cdot u =0,
	\end{cases}  \tag{KS-NS}
	\label{equation: NS-KS velocity form}
\ee
where $e_3 = (0,0,1)$ and the vector field $u$, representing the ambient fluid flow, satisfies the Navier-Stokes equation with an additional force term $-\rho e_3$ on the right-hand side, which denotes the cell's reaction force to the buoyancy exerted by the fluid.

\subsection{Background}

\subsubsection{Chemotaxis and singularity formation.} 
\label{subsection111}

For Keller-Segel equation \eqref{equation, Keller-Segel}, the total mass $M(t) = \int \rho(t,x) dx$ is conserved. In addition, the model \eqref{equation, Keller-Segel} satisfies the following scaling invariance: if $(\rho,c)$ is the solution to \eqref{equation, Keller-Segel}, so is
\[
(\rho_\lambda,c_\lambda) = \left( \frac{1}{\lambda^2} \rho\left(\frac{t}{\lambda^2}, \frac{x}{\lambda} \right),  c\left( \frac{t}{\lambda^2}, \frac{x}{\lambda} \right) \right), \quad \l > 0.
\]

In two dimensions, the total mass $M$ plays a crucial role in determining whether a solution exhibits global existence or finite-time blowup, with $M=8\pi$ serving as the threshold distinguishing between the two possibilities. Specifically, when $M> 8 \pi$ and the initial data $\rho_0 \in L_+^1((1+|x|^2),dx)$, after tracking the evolution of the second moment $M_2[\rho(t)] := \int \rho(t,x) |x|^2 dx$, it can be confirmed that the solution would blow up in finite time (see \cite{Blanchet_Dolbeault_Perthame_globalexistence06, Dolbeault_Perthame_globalexistence04} for details). In addition, Blanchet, Dolbeault, and Perthame \cite{ Blanchet_Dolbeault_Perthame_globalexistence06, Dolbeault_Perthame_globalexistence04} established a uniform $L^\infty$ bound for the solution with mass $M<8 \pi$, proving the global existence.

In three dimensions, it is known that there is no critical mass: namely, there exist radial blowup solutions with arbitrarily small mass \cite{MR1361006}. For general solutions that are not necessarily radial, Corrias, Perthame, and Zaag \cite{MR2099126} proved blowup will happen as long as the second moment of the initial data is sufficiently small compared to its mass, whereas weak solutions exist globally if the initial data has small $L^\frac{3}{2}$ norm. Interested readers can refer to \cite{MR4201903, MR3411100, MR3438649, MR3936129} for more results.

When it comes to the singularity formation for the $2D$ Keller-Segel equation, with the model in the $L^1$ critical case, Naito and Suzuki \cite{Naito_Suzuki_typeIIblowup} verified that any finite-time blowup solution to \eqref{equation, Keller-Segel} is of type II\footnote{The solution of \eqref{equation, Keller-Segel} exhibits type I blowup at $t=T$ if 
\begin{equation*}
    \limsup_{t \to T} (T-t) \| \rho(t) \|_{L^\infty} < \infty,
\end{equation*}
otherwise, the blowup is of type II.
}. Later, in the radial setting, by using the tail analysis, Rapha\"{e}l and Schweyer \cite{Raphael_Schweyer_2DtypeII_blowup14} precisely constructed a stable finite-blowup solution of the form
\be
\rho(t,x) \simeq \frac{1}{\lambda^2(t)} U\left(\frac{x}{\lambda(t)} \right), \quad U(x) = \frac{8}{(1+|x|^2)^2}, \nonumber
\ee
with the blowup rate
\[
\lambda(t) = \sqrt{T-t} e^{-\sqrt{\frac{|\ln (T-t)|}{2}}+ O(1)}, \quad t \to T.
\]
Subsequently, Collot, Ghoul, Masmoudi, and Nguyen \cite{Collot_Ghoul_Masmoudi_Nguyen_2DtypeII_blowup22} utilized spectral theory to extend stability to the nonradial setting and obtained a more precise blowup rate. Meanwhile, they also constructed countably many unstable blowup solutions. More recently, Buseghin, Davila, Del Pino, and Musso \cite{buseghin2023existence} employed the inner-outer gluing method to further extend these findings to multiple profiles.

Compared with $2D$ case, the singularity formation for the $3D$ Keller-Segel equation \eqref{equation, Keller-Segel} is much more diverse, with various blowup profiles and formations known to exist. 
A countable family of self-similar blowup profiles have been identified by Herrero, Medina, Vel\'azquez \cite{MR1651769} and Brenner, Constantin, Kadanoff, Schenkel, Venkataramani \cite{Brenner_Constantin_Leo_Schenkel_Venkataramani_steady_state_99}. 
Moreover, in \cite{Brenner_Constantin_Leo_Schenkel_Venkataramani_steady_state_99}, they numerically verified the stability of an explicit self-similar solution given by
\be
\rho(t,x) = \frac{1}{T-t}Q\left( \frac{x}{\sqrt{T-t}} \right), \quad \text{ with } \quad Q(x) = \frac{4(6+|x|^2)}{(2 + |x|^2)^2}.
\label{Q: definition}
\tag{Profile}
\ee
 Later,  {Glogi\'c and Sch\"orkhuber \cite{MR4685953}} rigorously established its stability in the radial case using semigroup theory. To the best of the authors' knowledge, it is still open for stability when extended to nonradial case. In addition, other non-self-similar blowup solutions to the $3D$ Keller-Segel equation have been identified. For example, Collot, Ghoul, Masmoudi, and Nguyen \cite{Collot_Ghoul_Masmoudi_Nguyen_3Dblowup_Collasping-ring_blowup23} discovered a type II blowup solution that concentrates in a thin layer outside the origin and collapses towards the center. More recently, Nguyen, Nouaili, and Zaag \cite{nguyen2023construction} found a type I-Log blowup solution for the $3D$ Keller-Segel equation.

\subsubsection{Suppression of blowup by fluids}

\mbox{}

When considering the chemotaxis-fluid models such as \eqref{equation: NS-KS velocity form}, the dynamics become more intricate. Upon cell aggregation, the buoyancy/friction-induced enhancement of the ambient flow might suppress the cell aggregation process. The literature on fluid-chemotaxis models is extensive, particularly regarding the suppression of system blowup. 

Notably, significant progress has been made in the field of the suppression of chemotactic blowup by the ambient fluid flow, building upon the pioneering work of Kiselev and Xu \cite{Kiselevandxusuppresionof}, where they identified the existence of passive flow\footnote{Here, "passive" means that the flow is independent of the cells, otherwise it is called active flow.}, capable of suppressing finite-time blowup in both $2D$ and $3D$. Subsequently, Bedrossian and He \cite{MR3730537,MR3892424,MR3826109} confirmed blowup suppression via strong shear flow in both $2D$ and $3D$. 

Regarding active flow related to the Keller-Segel-Navier-Stokes system, Zeng, Zhang and Zi \cite{MR4222377} and He \cite{MR4612142} demonstrated blowup suppression near large Couette flow in $2D$, while Li, Xiang and Xu \cite{li2023suppression} revealed blowup suppression near strong Poiseuille flow in $2D$. Additionally, Cui, Wang, and Wang \cite{cui2024suppression} identified blowup suppression near strong non-parallel shear flow in $3D$. These results all fall within the regime where the cell dynamics are perturbative with respect to the flow, so as to exploit the enhanced dissipation effect induced by fluid-mixing.

Beyond the perturbative regimes mentioned previously, it is natural to consider large data dynamics. Recently, Hu, Kiselev and Yao \cite{hukiselevyao2023suppression} found a mechanism of suppression by buoyancy. This discovery led to the verification of global existence for arbitrary smooth data concerning the Keller-Segel equation coupled with a fluid flow adhering to Darcy's law for incompressible porous media via buoyancy force within a domain with a lower boundary within two dimensions. Furthermore, Hu and Kiselev \cite{kevinSashastokesboussi} confirmed the suppression of blowup through sufficiently strong buoyancy within the context of Stokes-Boussinesq flow with a cold boundary both in $2D$ and $3D$. In addition, He and Gong \cite{He_Gong_8pithresholdforNSKS} showed the global existence of solutions to the $2D$ coupled Keller-Segel-Navier-Stokes system with friction force when the total mass $M< 8 \pi$.

Despite the abundant literature on the suppression of blowup, to the best of the authors' knowledge, there is limited literature to study whether the finite-time blowup would happen for fluid-chemotaxis models. Precisely speaking, in a bounded planar domain, when taking the advection of an arbitrary passive flow into account,  Winkler \cite{winkler_blowup_passive_flow} proved the existence of finite-time blowup as long as the cell is sufficiently concentrated initially. However, when it takes the active flow into account, only numerical evidence \cite{MR2901320} suggests that cell aggregation (i.e. blowup) is likely to happen.

\subsection{Main result}

\mbox{}

Our objective in this paper is to fill the gap in the literature regarding the existence of finite-time blowup solutions for anisotropic chemotaxis-fluid models. Specifically, our main result establishes the existence of a smooth finite-time blowup solution to \eqref{equation: NS-KS velocity form} with nonnegative density and finite mass, stated as follows.

\begin{theorem}[Existence of smooth finite-time blowup solution with nonnegative density and finite mass]
\label{main thm}
For any integer $s \ge 3$ and  any divergence-free vector field $u_0 \in H_\sigma^\infty(\RR^3)$ fixed, there exists non-negative $\rho_0 \in C^\infty_0(\mathbb{R}^3)$, such that the smooth solution to \eqref{equation: NS-KS velocity form} with initial data $(\rho_0, u_0)$ blows up at some time $t=T<\infty$. Moreover, we have
\be
	\rho(t,x) =\frac{1}{T-t} \left[ Q\left( \frac{x}{\sqrt{T-t}} \right) + \ep \left(t, \frac{x}{\sqrt{T-t}} \right) \right], \quad \text{ for } x\in\mathbb{R}^3, \;  t\in [0,T),
 \label{blowup formation}
 \ee
 where $Q$ is given by \eqref{Q: definition} and $\lim_{t\to T^-}\| \ep(t) \|_{H^s(\RR^3)} = 0$.
\end{theorem}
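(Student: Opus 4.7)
The plan is to implement a quantitative modulation/perturbation approach around the explicit self-similar profile $Q$, treating the fluid coupling as a subcritical perturbation, and then run a soft-topological argument to select an initial datum that lives on the (finite-codimensional) stable manifold, while an outer localization guarantees nonnegativity and finite mass.

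First, I would rewrite the density part of \eqref{equation: NS-KS velocity form} in self-similar variables $y = x/\sqrt{T-t}$, $s = -\log(T-t)$, and decompose
\[
\rho(t,x) = \frac{1}{T-t}\bigl[Q(y) + \ep(s,y)\bigr],
\]
so that $\ep$ satisfies an equation of the form $\pa_s \ep = \mathcal{L}\ep + N(\ep) + F_{u}(\ep)$, where $\mathcal{L}$ is the linearization of the self-similar Keller-Segel flow at $Q$, $N(\ep)$ is the quadratic nonlinearity in $\ep$, and $F_{u}$ contains the transport-by-$u$ term $-\sqrt{T-t}\,u(t,\sqrt{T-t}\,y)\cdot\nabla_y(Q+\ep)$. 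The prefactor $\sqrt{T-t}\to 0$ shows that the Navier-Stokes coupling is genuinely subcritical at the blowup scale, which is the structural reason one can port a pure Keller-Segel blowup argument to the coupled system. The Navier-Stokes part would be handled in standard Sobolev spaces $H^s_\sigma$ via the local well-posedness theory, giving uniform bounds on $u$ on $[0,T]$ that feed back harmlessly.

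Second, the analytical heart is the non-radial, finite-codimensional stability of $Q$ for the free 3D Keller-Segel equation. Here I would invoke the abstract semigroup framework of \cite{MR4359478}: conjugate $\mathcal L$ to a maximal accretive-type operator (with a suitable weighted/Sobolev norm adapted to the growth of $Q$), verify that its essential spectrum lies in a strictly stable half-plane $\{\Re\lambda\le -\sigma\}$, and then separate the finitely many unstable eigenmodes. The unstable directions will include the symmetry-induced ones (time-translation/modulation, scaling, rigid translations — this is where going nonradial matters — possibly rotations), plus any genuinely unstable spectral modes; by the compactness of the resolvent perturbation, this set is finite. Calling $N_{\rm unst}$ its cardinality and $P_{\rm s}$ the spectral projector onto the stable complement, one gets $\|e^{s\mathcal L}P_{\rm s}\|\lesssim e^{-\sigma s}$.

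Third, with that linear decay in hand I would run a bootstrap in self-similar variables: for initial data of the form $\rho_0 = Q + \epsilon_0$ with $\epsilon_0$ small in the chosen norm and parameterized by the $N_{\rm unst}$ unstable coordinates, modulate out the symmetry-induced directions by choosing time-dependent parameters $(\lambda(s),x_s(s),\gamma(s),\dots)$, and close the estimate on the stable part using the semigroup bound together with the smallness of $N(\ep)$ and $F_u(\ep)$. The remaining genuinely unstable scalar coordinates are controlled by a Brouwer-type topological shooting argument: varying a finite-dimensional parameter in $\epsilon_0$, the exit map from a small ball is a continuous map to a sphere whose degree forces at least one non-exiting trajectory — that trajectory gives the desired blowup solution of the form \eqref{blowup formation}.

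Finally, the output of the previous step produces $\rho_0$ that is close to $Q$ in a weighted Sobolev norm but not compactly supported, possibly not nonnegative, and ties the initial fluid data to the construction. The robust localization argument I would use is: cut off the tail $\rho_0 \mapsto \chi_R\rho_0 + \text{correction}$ where $\chi_R$ is a smooth cutoff at scale $R$, add a small compactly supported non-negative bump to absorb negative pieces (possible since $Q>0$ on the core), and project the corrected profile back onto the stable manifold by re-running the Brouwer argument with the finite-dimensional unstable parameter. The fact that both the linear semigroup and the Navier-Stokes perturbation are handled in a \emph{localized weighted} norm means the tail truncation contributes a tiny error, and the perturbation still lies in the basin described above; meanwhile the $L^\infty$-core dominance of $Q$ preserves nonnegativity after truncation. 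The specified $u_0$ is used as the actual initial velocity — not perturbed — since the estimates only use $\|u\|_{H^s}\lesssim 1$ on $[0,T]$, which is true for any $u_0\in H^\infty_\sigma$ by local well-posedness (choosing $T$ small if needed, which is compatible because the blowup time can be tuned by the scaling of $\epsilon_0$).

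The main obstacle I expect is step two: upgrading the radial spectral analysis of \cite{MR4685953} to the full nonradial setting. One must identify all unstable eigenmodes in $\mathbb{R}^3$ without the radial reduction, control the essential spectrum in a norm strong enough to close the transport term $u\cdot\nabla\rho$ (which requires spatial derivatives), and verify the hypotheses of the abstract semigroup theorem of \cite{MR4359478}; in particular, proving that no hidden unstable mode appears off the radial sector is the delicate point, and it is where the decomposition into spherical harmonics combined with a resolvent/compactness argument would do the work.
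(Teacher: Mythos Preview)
Your overall strategy is sound and tracks the paper's approach in broad outline (self-similar renormalization, semigroup decay modulo a finite unstable space via the framework of \cite{MR4359478}, Brouwer shooting, and a localization step), but it diverges from the paper in two substantive ways and contains one misconception worth correcting.

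First, the paper does \emph{not} modulate. There are no time-dependent parameters $(\lambda(s), x_s(s),\dots)$; the scaling is frozen by $\mu_\tau/\mu = -\tfrac12$ from the outset, and \emph{all} unstable directions---symmetry-generated or otherwise---are handled uniformly by the Brouwer argument. This is simpler than what you sketch, and it explains why your identified ``main obstacle'' (ruling out hidden nonradial unstable modes) is in fact not an obstacle: the paper never counts or identifies the unstable spectrum. It only needs that the unstable space is finite-dimensional, which follows from the compact-perturbation decomposition $-\calL = A_0 - \tfrac{1}{16} + K$ (Proposition~\ref{prop: decomposition of L}) and the abstract theory. The precise nonradial spectral count is explicitly left open (Comment~4 after the theorem). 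Your plan to separate symmetry modes from ``genuinely unstable'' ones and treat them differently would require exactly the spectral identification the paper avoids.

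Second, the localization is done differently and in a single pass. Rather than truncating $\rho_0$ and re-running Brouwer, the paper localizes the \emph{unstable eigenmodes}: it replaces the Riesz projector $P_u$ by a modified $\tpu$ whose range is spanned by cut-off modes $\chi_R\varphi_i$ (Definition~\ref{def: modified unstable space}), arranged so that the diagonal dynamics on the unstable space are unchanged. Choosing the initial stable part as $\tilde\ep_{s0} = -\tps\big((1-\chi_{R_0})Q\big)$ then makes $\Psi_0 = \chi_{R_0}Q + (\text{element of }C^\infty_0)$ automatically compactly supported and nonnegative (since $Q$ dominates on the support of the $C^\infty_0$ piece), with no second topological argument. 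Finally, the fluid is not controlled by uniform $H^s$ bounds on $u$ in the original frame---indeed $\|u(t)\|_{L^\infty}$ diverges logarithmically (Appendix~\ref{appB}). Instead one rescales $U = \mu\, u(t,\mu\cdot)$, makes $U_0$ small by taking $\mu_0$ small (this is also how the blowup time $T=\mu_0^2$ is tuned), and proves energy decay $\|U\|_{\dot H^1\cap\dot H^{k+1}} \lesssim e^{-\tau/8}$ directly in the self-similar frame; the subcriticality manifests as the buoyancy forcing $\mu\Psi e_3$ carrying an explicit factor of $\mu\to 0$.
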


\mbox{}

\noindent \textit{Comments to the main result.}

\mbox{}

\noindent\textit{1. Direct construction of blowup solution.} 
\mbox{}

In the literature, most blowup results for chemotaxis equations are obtained either by tracking the evolution of some appropriate functional (such as the second moment) to obtain a contradiction  \cite{Blanchet_Dolbeault_Perthame_globalexistence06, MR2099126, Dolbeault_Perthame_globalexistence04}, or by working in the radial setting where the mass accumulation function satisfies an ODE  \cite{Fuest_blowup_optimal_2021}. However, the coupling with fluid equations destroys the structures that these proofs rely on, making them inapplicable.

Therefore, we turn to the strategy of direct construction via stability analysis of an approximate blowup solution, often involving modulation or dynamical rescaling method. This strategy has found great success for parabolic, dispersive and fluid problems. In addition to works on Keller-Segel equation mentioned in Subsection \ref{subsection111}, other instances include nonlinear heat equation \cite{MR3986939,MR1427848},  nonlinear wave equation \cite{MR3537340, kim2022self}, nonliear Schr\"odinger equation \cite{MR4359478, MR2729284},  incompressible Euler equation \cite{Chen_Hou_Euler_blowup_theory_2023, Chen_Hou_Euler_blowup_numerical_2023,MR4334974, MR4445341}, compressible fluids \cite{buckmaster2022smooth, MR4445443} and so forth.

\mbox{}

{  
\noindent\textit{2. Blowup behavior of the fluid.}

In our setting, the fluid not only is unable to suppress the chemotactic blowup, but on the contrary, the self-similar blowup of chemotaxis \eqref{blowup formation} also generates a singularity in the fluid at the blowup time $T$. Precisely, we will verify the following estimates in Appendix \ref{appB}: there exists $C_1, C_2 > 1$ such that 
\begin{align}
        & C_1^{-1} |\log (T-t)| \le \| u(t) \|_{L^\infty} \le C_1 |\log (T-t)|, \quad {\rm as}\,\, t\to T;
        \label{growth of u(t)}
        \\
        & C_2^{-1} (T-t)^{-1} \le \| \na \pi (t) \|_{L^\infty} \le C_2 (T-t)^{-1},   \qquad {\rm as}\,\, t\to T.
        \label{growth of body pressure nabla pi}
    \end{align}
    In particular, compared to the blowup rate of the density (see \eqref{blowup formation}), the fluid blows up at a much slower rate near the blowup time $T$. Furthermore, from \eqref{growth of u(t)}, note that the critical space-time norm $\| u \|_{L^2_t([0, T], L^\infty_x)} < \infty$ remains finite, which guarantees non-existence of blowup on $[0,T]$ for the Navier-Stokes equations if there is no buoyancy force term $-\rho e_3$ on the right-hand side (see \cite[Theorem $4.9$]{MR4475666}). Hence the blowup in the Navier-Stokes flow \eqref{growth of u(t)} is essentially driven by the blowup in the chemotaxis, instead of the intrinsic element of the Navier-Stokes flow.
 }

\mbox{}

\noindent\textit{3. Strategy for control of the self-similar flow in nonradial setting.} 
\mbox{}

We choose the self-similar solution $Q$ to \eqref{equation, Keller-Segel} given in \eqref{Q: definition} as an approximate solution to \eqref{equation: NS-KS velocity form} and study the evolution of perturbation near $Q$ in the self-similar coordinate. The fluid part is treated perturbatively, based on the crucial observation that the Navier-Stokes equation is subcritical in scaling with respect to the Keller-Segel equation.\footnote{This is different from the model in \cite{hukiselevyao2023suppression} where the fluid part is supercritical.} The main task is to obtain decay of linearized flow near $Q$ for the Keller-Segel part.

In the radial setting,  {Glogi\'c and Sch\"orkhuber} \cite{MR4685953} have exploited the reduced-mass formulation of \eqref{equation, Keller-Segel} and the self-adjointness of the linearized operator in a certain weighted space to determine its spectrum and obtain semigroup decay. However, due to the anisotropic nature of buoyancy in \eqref{equation: NS-KS velocity form}, we have to extend to the nonradial setting. { Based on the abstract semigroup theory constructed by  Merle-Rapha\"{e}l-Rodnianski-Szeftel \cite{MR4359478} (which was further built upon Engel-Nagel \cite{engel2000one})}, the main idea is to show the linearized operator as the compact perturbation of a maximal accretive operator, which yields the semigroup decay modulo finite unstable directions (See Proposition \ref{proposition: spectral properties of L}). {  Apart from direct applications of their method \cite{buckmaster2022smooth,kim2022self,MR4445443}, this idea has found extensive application earlier. For example, Jia and \v{S}ver\'ak \cite{jiaSverakillposedness} employed it to study forward self-similar solutions of the Navier-Stokes equation, while Donninger and Sch\"orkhuber \cite{MR2909934,MR3537340} applied the similar idea to investigate self-similar blowup in wave equation.}
Another framework for semigroup decay (and spacetime estimates) without quantitative spectral analysis is presented in \cite{li2023stability}.

Finally, we integrate this semigroup estimate with a higher-order energy estimate to overcome the loss of derivative in the nonlinearity. Subsequently, we use Brouwer's argument to select initial data for decay in unstable directions.

\mbox{}

\noindent\textit{4. On the stability of self-similar profile.} 
\mbox{}

Following the standard approach in \cite[Proposition $4.10$]{MR3986939}, we can construct a finite-codimensional Lipschitz manifold of nonradial initial data such that the corresponding solution to system (\ref{equation: NS-KS velocity form}) blows up in finite time.  After dropping the perturbative estimates for fluid part, the proof also works for \eqref{equation, Keller-Segel}, implying the finite-codimensional asymptotic stability of $Q$ in nonradial setting. To consider the nonradial stability problem, it requires to count the precise number of unstable directions of the linearized operator in the nonradial setting, which remains to be addressed. In particular, the radial case was done in \cite{MR4685953}.

\mbox{}

\noindent\textit{5. Non-negativity of the density and finiteness of the mass.} 

In real-life scenarios, the density of the cell is always non-negative, and the total mass of the cell is finite. However, as indicated by \eqref{Q: definition}, $Q \notin L^p(\mathbb{R}^3)$ for any $p \in \left[1, \frac{3}{2} \right]$, implying that the approximate solution has infinite mass. 

To obtain finite-mass/energy blowup solution, one natural strategy is to truncate the profile far away and show it induces the analogous self-similar blowup behavior, which can be adapted for stable profile \cite{MR4445341, MR4685953}. A direct generalization for finite-codimensional stable blowup is when the instabilities are either generated by symmetry or of finite-mass \cite{MR3986939}. However, it requires delicate spectral analysis to completely characterize the unstable spectrum. In addition, \cite{MR4359478} (also see \cite{kim2022self}) proposed another method to address the general finite-codimensional stability case by working in a weighted Sobolev space adapted to time-dependent dampened profile and modifying the choice of initial data suitably in the Brouwer argument. 

In this paper, we propose a different approach to constructing a compactly supported smooth initial data based on the finite-codimensional stability of the profile. We reparametrize the spectral decomposition to localize the unstable eigenmodes instead of the profile,  which leads to both the compact support and non-negativity of the initial density. This modified spectral decomposition retains the diagonal part of the linear evolution, so it does not obstruct the bootstrap estimate (see Subsection \ref{subsection: modified spectral of L} for details). Despite its apparent simplicity, this method has effectively addressed our problem and is robust enough to apply to other various models.

\mbox{}

{ \noindent \noindent\textit{6. Extension of Theorem \ref{main thm}.} 

Our result can extend to more general subcritical buoyancy force $\vec\Phi(\rho)$ in replacement of $-\rho e_3$. For instance, one can take $\vec\Phi(\rho) = -\rho^\alpha e_3$ with $\alpha < \frac{3}{2}$ (such force was discussed in  \cite[(NCS)]{MR3991564} for instance). Besides, the argument presented in this paper do not depend on the specific form of the self-similar profile $Q$, so that the construction and stability analysis can accommodate other radial self-similar profiles of \eqref{equation, Keller-Segel} constructed in \cite{Brenner_Constantin_Leo_Schenkel_Venkataramani_steady_state_99,MR1651769}. 
}

\subsection{Structure of the paper}

\mbox{}

 In Section \ref{section: linear theory}, we firstly introduce the self-similar variable and formulate the singularity formation as the perturbation of the self-similar profile $Q$, then consider the spectral properties of the linearized operator $\mathcal{L}$ and the semigroup decay modulo finite (modified) unstable directions. Section \ref{section: bootstrap} is devoted to the nonlinear dynamics to complete the proof of Theorem \ref{main thm}. We leave the standard local well-posedness theory for \eqref{equation: NS-KS velocity form} in Appendix \ref{appendix: LWP}.

\subsection{Notations}

\mbox{}

Firstly, for any $R >0$, we define $B(0,R)$ as the ball centered at the origin with radial $R$ in three dimensions. Then, we choose $0 \le \chi \le 1 $ as a smooth cut-off function in $B(0,R)$ defined by
\be
\chi(x) :=
\begin{cases}
	1, & |x| \le 1, \\
	0, & |x| \ge 2,
\end{cases}
\qquad 
\chi_R(x) := \chi\left( \frac{x}{R} \right).
\label{cutoff function: defi}
\ee

Next, we call the vector of form $\alpha = (\alpha_1, \alpha_2, \alpha_3) \in \NN^3$ the multi-index of order $|\alpha|=\alpha_1 + \alpha_2 +\alpha_3$. For any given scalar function $f,g$, we define the partial derivative of $f$ with respect to the multi-index $\alpha$ by
\[
\partial^\alpha f(x) = \partial_{x_1}^{\alpha_1} \partial_{x_2}^{\alpha_2} \partial_{x_3}^{\alpha_3} f(x),
\]
and for any $k \ge 0$, we denote $D^k$ by
\[
D^k f = (\partial^\alpha f)_{|\alpha|=k}.
\]
In particular, for $k=1$, we simplify the notation into $Df =(\partial_{x_1} f, \partial_{x_2}f , \partial_{x_3} f)$, which is the gradient of $f$. Hence for any integer $k \ge 0$, we define the inner product on $\dot H^k$ and $H^k$ by 
\[ (f,g)_{\dot H^k} = (D^k f, D^k g)_{L^2} := \sum_{|\alpha|=k}(\partial^\alpha f, \partial^\alpha g)_{L^2},\quad (f,g)_{H^k} = (f,g)_{L^2} +  (f,g)_{\dot H^k}.\]
We also write $H^\infty = \cap_{k \ge 0} H^k$. 
Next, we denote $H^{k} (\RR^3; \RR)$, $H^k(\RR^3;\CC)$ (or $\dot H^{k} (\RR^3; \RR)$, $\dot H^k (\RR^3; \CC$) for the collection of all $\RR$-valued or $\CC$-valued functions respectively with finite $H^k$ (or $\dot H^k$) norm.
Out of the simplicity of notation, we will use $H^k$ (or $\dot H^k$) to refer to $H^k(\RR^3; \CC)$ (or $\dot H^k(\RR^3; \CC)$) in Subsection \ref{subsection: spectral property of L0}-\ref{subsection: accretivity and maximality of L}, and to refer to $H^k(\RR^3; \RR)$ (or $\dot H^k(\RR^3; \RR)$) in Subsection \ref{subsection: modified spectral of L} and Section \ref{section: bootstrap}. 

Similarly, for any given vector-valued function $u =(u_1,u_2,u_3)$, we can also define $\pa^\alpha u$ by
$
\partial^\alpha u = (\partial^\alpha u_1, \partial^\alpha u_2, \partial^\alpha u_3),
$
 $D^k u$ by
$
D^k u = (\partial^\alpha u)_{|\alpha|=k},
$
and thereafter the vector-valued $H^k(\RR^3;\RR^3)$ (or $\dot{H}^k(\RR^3;\RR^3)$) space similarly. In addition, we denote $H^k_{\sigma}(\RR^3)$ the set of all divergence-free vector fields within the $H^k(\RR^3;\RR^3)$ space, namely 
\[
  H^k_\sigma(\RR^3) := \{ u \in H^k(\RR^3; \RR^3): \nabla \cdot u = 0 \}.
\]

Moreover, we denote $C_0^\infty$ for the set of infinitely differentiable functions with compact support, and similarly  
$\mathscr{S}$ for Schwartz functions. 

Finally, if $A$ is a linear operator on a Hilbert space $H$, then we denote $\rho(A)$, $\sigma(A)$ to be the resolvent set and spectral set of $A$ respectively.

\mbox{}\\

\begin{acknowledgement}
Z.L. is partially supported by the ERC advanced grant SWAT. T.Z. is partially supported by MOE Tier $1$ grant A-0008491-00-00. The authors gratefully thank Yao Yao for valuable discussions and insightful suggestions during the course of this research. We sincerely thank the reviewers for their careful reading and informative suggestions. Special thanks are also extended to Charles Collot, Mahir Had\v{z}i\'c, Zhongtian Hu, and Van Tien Nguyen for their helpful discussions.
\end{acknowledgement}

\section{Linear theory in the self-similar coordinate}
\label{section: linear theory}
\subsection{Self-similar coordinate and renormalization}
Firstly, we renormalize the system \eqref{equation: NS-KS velocity form} in the self-similar coordinate. We define
\be
y =\frac{x}{\mu}, \quad \frac{d\tau}{dt} = \frac{1}{\mu^2}, \quad
\tau\big|_{t=0} = 0,
\quad \frac{\mu_\tau}{\mu} =-\frac{1}{2},
\label{self-similar coordinate 2}
\ee
and the corresponding renormalization
\be 
\rho(t,x) = \frac{1}{\mu^2} \Psi (\tau, y), \quad u(t,x) = \frac{1}{\mu} U(\tau,y), \quad \pi(t,x) = \frac{1}{\mu^2} \Pi(\tau, y).
\label{self-similar coordinate}
\ee
If the initial data of $\mu$ is given by $\mu|_{\tau=0} = \mu_0>0$, then from the equation $\frac{\mu_\tau}{\mu}=-\frac{1}{2}$ determined in \eqref{self-similar coordinate 2}, we can explicitly solve it by
\be
\mu(\tau) = \mu_0 e^{-\frac{1}{2} \tau}, \quad \forall \; \tau\ge 0.
\label{bootstrap assum: scaling}
\ee
The original system \eqref{equation: NS-KS velocity form} of $(\rho, u, \pi)$ is mapped to the following renormalized system of $(\Psi, U, \Pi)$ under the self-similar coordinate,
\be
\begin{cases}
\partial_\tau \Psi + \frac{1}{2} \Lambda \Psi = \Delta \Psi + \na \cdot \left( \Psi \na \Dein \Psi \right) - U \cdot \nabla \Psi,  \\
\partial_\tau U+  \frac{1}{2} \left( U + y \cdot \nabla U \right) +U \cdot \nabla U = \Delta U -\nabla \Pi - \mu \Psi e_3, \\
\nabla \cdot U =0,
\end{cases}
\label{renormalized equation: NS-KS}
\ee
where $\Lambda$ is the scaling operator
\be \Lambda f :=2f +  y\cdot \nabla f.
\label{scaling oper: defi}
\ee
Plugging the ansatz $\Psi = Q + \e$, where $Q$ is the self-similar profile to the Keller-Segel equation given in \eqref{Q: definition}, the system is then transformed into:
\be
\begin{cases}
\partial_\tau \ep = -\calL \ep + \nabla \cdot (\ep \nabla \Dein \ep) -U \cdot \na \Psi, \\
\partial_\tau U+  \frac{1}{2} \left( U + y \cdot \nabla U \right) +U \cdot \nabla U = \Delta U -\nabla \Pi - \mu \Psi e_3,  \\
\nabla \cdot U =0,
\end{cases}
\label{linearized equation}
\ee
where $-\calL$ is the related linearized operator defined by
\be
\calL = \calL_0 + \calL', \quad \text{with }
\calL_0 f= -\Delta f + \frac{1}{2} \Lambda f \;  \text{ and } \; \calL' f = - \na \cdot (f \na \Dein Q) -
\na \cdot (Q \na \Dein f).
\label{linearized operator: def}
\ee

In the following content of Section \ref{section: linear theory}, our focus will be on studying the spectral properties of the linearized operator $-\calL$.

\subsection{Linear theory of $\calL_0$}
\label{subsection: spectral property of L0}
\mbox{}

In this subsection, we study the spectral properties of the linear operator $\calL_0$ in \eqref{linearized operator: def}, which is the dominating term of the linearized operator $\calL$.

Firstly, we study behavior of $\calL_0$ on $L_\omega^2$, where $L^2_\omega(\RR^3)$ is an $L^2$ weighted space defined as follows: 
\bee
(f, g)_{L^2_\omega(\RR^3)} = \int_{\RR^3} f \bar g \omega dy,\quad \omega(y) = e^{-\frac{|y|^2}{4}}. 
\eee

\begin{lemma}[Self-adjointness and semigroup of $\calL_0$ in $L^2_\omega$]\label{lemselfadjcalL0}
The operator $\calL_0: C^\infty_0(\RR^3) \subset L^2_\omega(\RR^3) \to L^2_\omega(\RR^3)$ is essentially self-adjoint. Denote $\calL_0 \big|_{L^2_\omega}$ as its unique self-adjoint extension, then $\sigma\left(\calL_0 \big|_{L^2_\omega}\right) \subset \left[ \frac 14, \infty \right)$, and $-\calL_0 \big|_{L^2_\omega}$ generates a strongly continuous semigroup $(e^{-\tau \calL_0})_{\tau \ge 0}$ of bounded operators on $L^2_\omega(\RR^3)$. Explicitly, for $f \in C^\infty_0(\RR^3)$, 
\be
 \left(e^{-\tau \calL_0} f\right)(y) = e^{-\tau} (G_{1- e^{-\tau}} * f)\left(e^{-\frac{\tau}{2}} y \right), \label{eqsemicalL0}
 \ee
 where $G_\l = \l^{-\frac 32} G_1(\l^{-\frac 12} \cdot)$, with $G_1(y)= (4\pi)^{-\frac 32} e^{-\frac{|y|^2}{4}}$ being the heat kernel. 
\end{lemma}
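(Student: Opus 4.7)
\emph{Step 1 (Symmetry, semiboundedness and the quadratic form).} My first move is to observe that $\mathcal{L}_0 = A + I$, where $A = -\Delta + \tfrac{1}{2} y \cdot \nabla$ is the standard Ornstein–Uhlenbeck generator for which the Gaussian weight $\omega$ is the natural invariant measure (note $\nabla \omega = -\tfrac{y}{2}\omega$). Integrating by parts in $L^2_\omega$, the drift term produced by $\tfrac{1}{2} y\cdot\nabla$ exactly cancels the one picked up from the weight when handling $-\Delta$, and I obtain, for $f,g\in C^\infty_0(\mathbb{R}^3)$,
\[
(\mathcal{L}_0 f, g)_{L^2_\omega} \;=\; (\nabla f,\nabla g)_{L^2_\omega} + (f,g)_{L^2_\omega}.
\]
This identity is manifestly symmetric in $f,g$, so $\mathcal{L}_0$ is symmetric on $C^\infty_0$, and it yields the coercive bound $(\mathcal{L}_0 f, f)_{L^2_\omega} \ge \|f\|^2_{L^2_\omega}$. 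Once essential self-adjointness is established, the min–max (or form) characterization of the spectrum of the self-adjoint extension then gives $\sigma(\mathcal{L}_0|_{L^2_\omega}) \subset [1,\infty) \subset [\tfrac14,\infty)$.

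\emph{Step 2 (Construction of the semigroup via Mehler's formula and essential self-adjointness).} To prove essential self-adjointness and pin down the semigroup simultaneously, I will verify formula \eqref{eqsemicalL0} directly. Define $S(\tau)$ by the right-hand side of \eqref{eqsemicalL0}. Writing $S(\tau) = e^{-\tau} e^{-\tau A}$ and using the ansatz $u(\tau,y) = e^{-\tau} h(\tau, e^{-\tau/2}y)$ inside $\partial_\tau u = -\mathcal{L}_0 u$, the ansatz reduces the problem to $\partial_\tau h = e^{-\tau}\Delta_z h$, which is solved by the heat kernel in the rescaled time $s = 1 - e^{-\tau}$. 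This identifies $h(\tau,\cdot) = G_{1-e^{-\tau}} * f$. Boundedness on $L^2_\omega$ is then a change-of-variables computation: the Mehler kernel of $e^{-\tau A}$ is a contraction on $L^2_\omega$, so $\|S(\tau)\|_{L^2_\omega \to L^2_\omega} \le e^{-\tau}$. The semigroup law and strong continuity at $\tau=0$ follow from the standard properties of the heat convolution and the continuity of the rescaling $y\mapsto e^{-\tau/2}y$ in $L^2_\omega$. Symmetry of $S(\tau)$ on $C^\infty_0$ is inherited from Step~1; combined with boundedness, each $S(\tau)$ is self-adjoint, so its generator $B$ is self-adjoint with $-B \ge I$.

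\emph{Step 3 (Identifying $B$ with the closure of $\mathcal{L}_0$).} It remains to match $B$ with the closure of $\mathcal{L}_0|_{C^\infty_0}$. A direct differentiation of $S(\tau)f$ at $\tau=0$ for $f\in C^\infty_0$ reproduces exactly $-\mathcal{L}_0 f$, so $B$ extends $-\mathcal{L}_0|_{C^\infty_0}$. Because the Schwartz class $\mathscr S$ is invariant under $S(\tau)$ (the map $f \mapsto G_{1-e^{-\tau}} * f \circ (e^{-\tau/2}\cdot)$ preserves $\mathscr S$) and is dense in $L^2_\omega$, it is automatically a core for $B$ by the standard semigroup-invariance criterion. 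The main (and only slightly non-trivial) step is upgrading from $\mathscr S$ to $C^\infty_0$ as a core; I expect this to be the main technical obstacle, but it is handled by a Gaussian-weighted cut-off argument: for $f\in\mathscr S$, set $f_R = \chi_R f$ with $\chi_R$ as in \eqref{cutoff function: defi}. Dominated convergence in $L^2_\omega$ gives $f_R \to f$, and the computation of $\mathcal{L}_0 f_R = \chi_R \mathcal{L}_0 f - 2\nabla\chi_R\cdot\nabla f - (\Delta\chi_R)f + \tfrac{1}{2}(y\cdot\nabla\chi_R) f$ shows, using the rapid decay of $f$ and derivatives together with $\omega$, that $\mathcal{L}_0 f_R \to \mathcal{L}_0 f$ in $L^2_\omega$ as $R\to\infty$. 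Hence $C^\infty_0$ is dense in the graph norm of $B$, which means $\mathcal{L}_0|_{C^\infty_0}$ is essentially self-adjoint with closure $-B$. Collecting the pieces from Steps 1–3 yields all three conclusions of the lemma: the unique self-adjoint extension, the spectral containment, and the explicit semigroup formula \eqref{eqsemicalL0}.
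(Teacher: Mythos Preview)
Your proposal is correct, and it takes a genuinely different route from the paper's proof.

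The paper argues essential self-adjointness by the \emph{unitary conjugation} $U: L^2 \to L^2_\omega$, $u \mapsto \omega^{-1/2}u$, which transforms $\calL_0$ into the dilated harmonic oscillator $A_0 = -\Delta + \tfrac{|y|^2}{16} + \tfrac14$ on the flat $L^2$; essential self-adjointness and the spectral bound then come for free from the classical Schr\"odinger-operator theory. The semigroup is obtained abstractly via Hille--Yosida, and the explicit formula is verified a posteriori by undoing the self-similar change of variables on the heat equation and invoking uniqueness of mild solutions. Your approach reverses the logic: you build the candidate semigroup $S(\tau)$ explicitly (Mehler/Ornstein--Uhlenbeck), check it is a $C_0$ contraction semigroup on $L^2_\omega$, and then recover essential self-adjointness from the semigroup side by showing $C^\infty_0$ is a core for the generator. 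Your integration-by-parts identity in Step~1 actually yields the sharper containment $\sigma(\calL_0|_{L^2_\omega})\subset [1,\infty)$, which of course implies the stated $[\tfrac14,\infty)$. The paper's route is shorter because it outsources the hard work to a textbook reference on the harmonic oscillator; yours is more self-contained and makes the Ornstein--Uhlenbeck structure transparent, at the cost of the extra core argument in Step~3.

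One small caution: in Step~2 you write that ``symmetry of $S(\tau)$ on $C^\infty_0$ is inherited from Step~1''. Step~1 gives symmetry of $\calL_0$, not directly of $S(\tau)$; to avoid circularity you should either verify symmetry of the Mehler kernel with respect to $\omega\,dy$ directly (a one-line Gaussian computation), or bypass this entirely by noting that once $C^\infty_0$ is a core for the generator $B$ and $B|_{C^\infty_0} = -\calL_0$ is symmetric, the closed operator $B$ is symmetric and, being the generator of a contraction semigroup, is m-dissipative, hence self-adjoint. Either fix is routine.
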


\begin{proof}
Conjugating $\calL_0$ with the unitary map
\be
 U: L^2(\RR^3) \to L^2_\omega(\RR^3),\quad u \mapsto \omega^{-\frac 12} u, \label{eqsemigroupcalL0}
\ee
we obtain 
\bee
  A_0 := U^{-1} \calL_0 U = -\Delta + \frac{|y|^2}{16} + \frac 14: C^\infty_0(\RR^3) \subset L^2(\RR^3) \to L^2(\RR^3).
\eee
This is a dilated harmonic oscillator, which is essentially self-adjoint (see for example \cite[Example 9.4]{helffer2013spectral}) and hence so is $\calL_0$ on $L^2_\omega$. 
Moreover, 
\[ \sigma\left(\calL_0\big|_{L^2_\omega}\right) = \sigma\left( A_0\big|_{L^2}\right) \subset \left[\frac 14, \infty\right), \]
so from the Hille-Yosida theorem, $-\calL_0$ generates a strongly continuous contraction semigroup on $L^2_\omega$. 

Now we compute \eqref{eqsemicalL0}. For $f_0 \in C^\infty_0(\RR^3)$, the standard semigroup theory (see \cite[Chap. 2 Proposition 6.4]{engel2000one}) implies that $f(\tau) := e^{-\tau \calL_0} f_0$ is the unique mild solution of the Cauchy problem
\be
\begin{cases}
      \pa_\tau f + \calL_0 f  = 0,  \\
    f\big|_{t=0} = f_0,
\end{cases}
% \left| \begin{array}{ll}
%      \pa_\tau f + \calL_0 f  = 0  \\
%    f\big|_{t=0} = f_0
% \end{array}\right. 
\label{eqlinearevocalL0}
\ee
namely $\int_0^\tau f(s) ds \in D\left(\calL_0 \big|_{L^2_\omega}\right)$ for all $\tau \ge 0$ and 
\[ f(\tau) = f_0 -\calL_0 \int_0^\tau f(s) ds. \]

Now let $g(t) = e^{t\Delta} f_0 =G_t * f_0$. Then $f_0 \in C^\infty_0(\RR^3)$ implies $g(t) \in \mathscr{S}(\RR^3)$, and thus $g(t)$ is the classical solution of the linear heat equation
\[ 
\begin{cases}
     \pa_t g -\Delta g  = 0,  \\
    g\big|_{t=0} = f_0.
\end{cases}
\]
Taking the self-similar renormalization
\bee
  \tau(t) = -\ln (1-t),\quad \l(t) = \sqrt{1-t},\quad y = \frac{x}{\l(t)},\quad v(\tau, y) = \l(t)^2 g(t, x),
\eee
we see $v(\tau) \in \mathscr{S}(\RR^3)$ for $\tau \in [0, \infty)$ is a classical solution to \eqref{eqlinearevocalL0}, in particular a mild solution since $\mathscr S(\RR^3)\subset D\left(\calL_0 \big|_{L^2_\omega}\right)$. By the uniqueness of mild solution, we have $f(\tau, y) = v(\tau, y)$, namely
\bee
  \left(e^{-\tau \calL_0} f_0 \right)(y) = \l(t(\tau))^2 \left(G_{t(\tau)} * f_0 \right)\left(\l(t(\tau))y \right) 
  = e^{-\tau} \left(G_{1-e^{-\tau}} * f_0 \right) (e^{-\frac{\tau}2} y).
\eee
\end{proof}

Next, we study the spectral properties of $\calL_0$ on Hilbert space $H^k$.
\begin{lemma}[Closedness and smoothing resolvent estimate for $\calL_0$ in $H^k$]
\label{lemclosmores}
Let $k \ge 0$. The operator $\calL_0: C^\infty_0(\RR^3) \subset H^k(\RR^3) \to H^k(\RR^3)$ is closable and we define the closure as $\left( \calL_0 \big|_{H^k}, \calD\left(\calL_0 \big|_{H^k}\right) \right)$. For any $\l \in \rho\left(\calL_0\big|_{H^k} \right)$ with $\Re \l < \frac 14$, the resolvent satisfies the following  smoothing estimate: for any $m \ge k$,
    \be
      \left\| (\calL_0 \big|_{H^k} - \l)^{-1} f \right\|_{H^{k+\frac 32}} \le C \left(\left|\frac 14 - \Re \l \right|^{-\frac 14} + \left|\frac 14 - \Re \l \right|^{-1} \right) \|f \|_{H^k}, \label{eqresest}
    \ee
    for some $C=C(k)>0$. In particular, $\calD \left(\calL_0 \big|_{H^k}\right) \subset H^{k+ \frac{3}{2}}$.
\end{lemma}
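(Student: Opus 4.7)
The strategy is to extend the explicit semigroup formula \eqref{eqsemicalL0} from the previous lemma to act on $H^k(\RR^3)$, read off closability of $\calL_0$ on $C_0^\infty$ from the semigroup, and then obtain the resolvent estimate via the Laplace transform representation $(\calL_0|_{H^k}-\lambda)^{-1} = \int_0^\infty e^{\lambda\tau} e^{-\tau\calL_0}\, d\tau$ together with the smoothing of the heat kernel $G_s$.

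\textbf{Semigroup bounds on $H^k$.} Set $T(\tau) f(y) := e^{-\tau}(G_{1-e^{-\tau}}*f)(e^{-\tau/2}y)$ for $f \in H^k(\RR^3)$. Combining the scaling identity $\|h(\mu \cdot)\|_{\dot H^s} = \mu^{s-3/2}\|h\|_{\dot H^s}$ with the contractivity $\|G_s * f\|_{\dot H^s} \le \|f\|_{\dot H^s}$ gives
\[
\|T(\tau) f\|_{L^2} \le e^{-\tau/4}\|f\|_{L^2}, \qquad \|T(\tau) f\|_{\dot H^k} \le e^{-\tau(k/2 + 1/4)}\|f\|_{\dot H^k},
\]
and hence $\|T(\tau)\|_{H^k \to H^k} \le e^{-\tau/4}$. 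Strong continuity at $\tau = 0$ is elementary on $C_0^\infty$ and extends to $H^k$ by density, so $(T(\tau))_{\tau \ge 0}$ is a $C_0$-semigroup on $H^k$.

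\textbf{Closability, generator, and Laplace representation.} Direct time-differentiation for $f \in C_0^\infty$ gives $\frac{d}{d\tau} T(\tau) f \big|_{\tau=0} = -\calL_0 f$; writing $-A$ for the generator of $T(\tau)$, this shows $C_0^\infty \subset \calD(A)$ and $A|_{C_0^\infty} = \calL_0$, and in particular $\calL_0|_{C_0^\infty}$ is closable with closure contained in $A$. For the reverse inclusion, $\mathscr{S}(\RR^3)$ is $T(\tau)$-invariant by the explicit formula and dense in $H^k$, hence a core for $A$; a standard cutoff-and-mollification argument then approximates Schwartz functions by $C_0^\infty$ in the graph norm, so $C_0^\infty$ is also a core and $\calL_0|_{H^k} = A$. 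By standard semigroup theory, for any $\lambda$ with $\Re\lambda < 1/4$,
\[ (\calL_0|_{H^k} - \lambda)^{-1} f = \int_0^\infty e^{\lambda \tau} T(\tau) f \, d\tau, \]
with the integral converging in the Bochner sense in $H^k$.

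\textbf{Smoothing and final estimate.} The heat smoothing $\|G_s * f\|_{\dot H^{k + 3/2}} \le C s^{-3/4} \|f\|_{\dot H^k}$, immediate from $\sup_{\xi} |\xi|^{3/2} e^{-s|\xi|^2} \le C s^{-3/4}$, combined with the scaling identity yields $\|T(\tau) f\|_{\dot H^{k+3/2}} \le C e^{-(k/2 + 1)\tau}(1 - e^{-\tau})^{-3/4} \|f\|_{\dot H^k}$. Inserting into the Laplace representation, the $L^2$ bound
\[ \|(\calL_0|_{H^k}-\lambda)^{-1} f\|_{L^2} \le \left|\tfrac14 - \Re\lambda\right|^{-1}\|f\|_{L^2} \]
is immediate, while the $\dot H^{k+3/2}$ integral, after the substitution $s = 1 - e^{-\tau}$, reduces to the Beta integral
\[ B\bigl(\tfrac14,\, \tfrac{k}{2} + 1 - \Re\lambda\bigr) = \frac{\Gamma(1/4)\,\Gamma(k/2 + 1 - \Re\lambda)}{\Gamma(k/2 + 5/4 - \Re\lambda)}, \]
and Stirling's asymptotic $\Gamma(x+a)/\Gamma(x+b) \sim x^{a-b}$ as $x \to \infty$ bounds this by $C|1/4 - \Re\lambda|^{-1/4}$ uniformly for $\Re\lambda < 1/4$. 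Adding the $L^2$ and $\dot H^{k+3/2}$ bounds and using $\|\cdot\|_{H^{k+3/2}}^2 \simeq \|\cdot\|_{L^2}^2 + \|\cdot\|_{\dot H^{k+3/2}}^2$ gives \eqref{eqresest}. The main obstacle is the semigroup-theoretic identification $\calL_0|_{H^k} = A$ via the core argument; once that is in place, the remainder is a direct Beta-function computation.
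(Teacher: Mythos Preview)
Your proof is correct and follows essentially the same route as the paper: both write the resolvent as the Laplace integral of the explicit semigroup \eqref{eqsemicalL0} and extract the $H^{k+\frac32}$ gain from the heat-kernel smoothing $\|G_s * f\|_{\dot H^{k+3/2}} \lesssim s^{-3/4}\|f\|_{\dot H^k}$, arriving at the same $|1/4-\Re\l|^{-1/4}+|1/4-\Re\l|^{-1}$ bound.

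The one genuine difference is the closability step. The paper obtains it from the coercivity estimate $\Re(\calL_0 f,f)_{H^k}\ge \tfrac14\|f\|_{H^k}^2$, computed directly from the scaling identity \eqref{energy estimate: scaling part KS} for $\Lambda$, and then transfers the resolvent formula from $L^2_\omega$ to $H^k$ via the embedding $H^k\hookrightarrow L^2_\omega$ and uniqueness of the closure. You instead build the semigroup $T(\tau)$ directly on $H^k$, identify the closure of $\calL_0|_{C_0^\infty}$ with its generator through an invariant-subspace core argument ($\mathscr{S}$ is $T(\tau)$-invariant and dense, then $C_0^\infty$ approximates $\mathscr{S}$ in graph norm). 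The paper's route is a bit shorter and elementary, avoiding the core verification; yours is self-contained in $H^k$ and never appeals to the weighted space $L^2_\omega$. Your final integral bookkeeping via the Beta function is also a clean alternative to the paper's direct splitting of $\int_0^\infty e^{-(1/4-\Re\l)\tau}\langle(1-e^{-\tau})^{-1}\rangle^{3/4}d\tau$.
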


\begin{proof}

\noindent Firstly, we consider the coercivity of $\calL_0$ in $H^k$ with domain $D(\calL_0) = C_0^\infty(\RR^3)$. For any $f \in C_0^\infty(\RR^3)$, the $\dot H^k$ norm of $f_\lambda(y) = \lambda^2 f(\lambda y)$ satisifies
\[
\| f_\lambda \|_{\dot H^k}^2 = \int_{\RR^3} |D^k f_\lambda|^2 dy  = \lambda^{2k+1} \| D^k f \|_{L^2}^2,
\]
then taking derivative with respect to $\lambda$ and choosing $\lambda=1$, then
\be
 \Re (D^k (\Lambda f), D^k f)\Ltwo = \frac{2k+1}{2} \| f \|_{\dot H^k}^2.
\label{energy estimate: scaling part KS}
\ee
Hence
 {
\begin{align}
\Re (\calL_0 f, f)_{H^k} 
& =  \Re \left(-\Delta f + \frac{1}{2} \Lambda f, f \right)_{L^2}  + 
\Re \left( D^k \left( -\Delta f + \frac{1}{2} \Lambda f \right), D^k f \right)_{L^2} \notag \\
& \ge \frac{1}{4} \| f \|_{H^{k+1}}^2 \ge \frac{1}{4} \| f \|_{H^{k}}^2.
\label{coercivity of L0}
\end{align}
}
Then applying Cauchy-Schwarz inequality to (\ref{coercivity of L0}), we easily obtain $\| \calL_0 f \|_{H^k} \ge \frac{1}{4} \| f \|_{H^k}$ for $f \in C^\infty_0(\RR^3)$. Also since $\calL_0$ is densely defined on $H^k$, it is closable according to \cite[Chap. II,  Proposition 3.14 (iv)]{engel2000one}.
Besides, we have $ \calD\left(\calL_0 \big|_{H^k}\right) \subset \calD\left(\calL_0 \big|_{L^2_\omega}\right)$, then $\left(\calL_0 \big|_{H^k} - \l\right)^{-1} g = \left(\calL_0 \big|_{L^2_\omega} - \l\right)^{-1} g$ for $g \in H^k$, and $\calL_0 \big|_{H^k} f =  \calL_0 \big|_{L^2_\omega} f$ for $f \in \calD\left(\calL_0 \big|_{H^k}\right)$, which follows that $H^k$ is embedded in $L^2_\omega$ and the uniqueness of closure $\left(\calL_0, C_0^\infty(\RR^3) \right)$ on $L_\omega^2$ and $H^k$.

From the closedness of $\calL_0 \big|_{H^k}$, it suffices to prove  \eqref{eqresest} for $f \in C^\infty_0(\RR^3)$, for which we can replace $\calL_0 \big|_{H^k}$ by $\calL_0 \big|_{L^2_\omega}$ and apply \eqref{eqsemicalL0}. Indeed, noticing that for any $\zeta, \mu > 0$,
\[
\| G_\zeta * f  \|_{H^{k+\frac{3}{2}}} = C \| \la \cdot \ra^{k+\frac{3}{2}} \hat G_{\zeta} \hat f \|_{L^2} 
\le C \| \la \cdot \ra^{\frac{3}{2}} \hat G_\zeta \|_{L^\infty} \| \la \cdot \ra^k f\|_{L^2} \le C \la \zeta^{-1} \ra^{\frac{3}{4}}  \| f \|_{H^k},
\]
for some $C=C(k)>0$, thus for $\Re\l < \frac 14$ and $f \in C^\infty_0(\RR^3)$, we obtain via Laplace transform and \eqref{eqsemicalL0}  that
\bee
 (\calL_0 - \l)^{-1} f = \int_0^\infty e^{-\tau (\calL_0 - \l)} f d\tau = \int_0^\infty e^{-(1-\l)\tau} (G_{1-e^{-\tau}} * f )\left(e^{-\frac \tau 2} \cdot \right) d\tau.
\eee
This implies that there exists a constant $C=C(k)>0$ such that
\bee
&&\left\| (\calL_0 - \l)^{-1} f  \right\|_{H^{k+\frac 32}} \le \int_0^\infty e^{-(1-\l)\tau} \left\| (G_{1-e^{-\tau}} * f )\left(e^{-\frac \tau 2} \cdot \right) \right\|_{H^{k+\frac 32}} d\tau \\
&\le & C\int_0^\infty e^{-(\frac 14 - \l) \tau} \left\la \frac{1}{1-e^{-\tau}} \right\ra^{\frac 34} \| f \|_{H^k}d\tau 
\le C\left(\left|\frac 14 - \Re \l \right|^{-\frac 14} + \left|\frac 14 - \Re \l \right|^{-1} \right)  \| f \|_{H^k}.
\eee
\end{proof}

\subsection{Perturbed maximal dissipativity of $-\calL$}
\label{subsection: accretivity and maximality of L}
This subsection focuses on the analysis of the linearized operator $-\calL$. The main result is the following:

\begin{proposition}[Perturbed maximal dissipativity of $-\calL$] For $k \ge 0$,
\label{prop: decomposition of L}
    there exist a maximally dissipative operator $A_0:\calD(A_0) \subset H^k(\RR^3;\CC) \to H^k(\RR^3;\CC)$ with $\calD(A_0) = \calD\left( \calL_0 \big|_{H^k} \right)$, namely 
    \bea
     \forall f \in \calD\left( \calL_0 \big|_{H^k} \right), \quad  \Re (A_0 f, f)_{H^k} \le 0, \label{eqA0dis} \\
      \exists R > 0, \quad A_0 - R: \calD\left( \calL_0 \big|_{H^k} \right) \to H^k(\RR^3;\CC)\,\,{\rm is \,\,surjective;} \label{eqA0max}
    \eea 
    and $K$ compact on $H^k(\RR^3; \CC)$ such that
    \be
    -\calL = A_0 - \frac 1{16} + K.
    \label{L: decomposition}
    \ee
\end{proposition}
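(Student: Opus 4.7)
The plan is to split $\calL'$ via a spatial cutoff, placing the small tail inside the maximally dissipative $A_0$ and the compactly supported part inside $K$. As a first step I would compute $\calL'$ explicitly. Since $Q(y) = 4(6+|y|^2)/(2+|y|^2)^2$ satisfies $\Delta^{-1}Q = 2\log(2+|y|^2)$, setting $v := \nabla \Delta^{-1} Q = 4y/(2+|y|^2)$ (so $\nabla \cdot v = Q$) and expanding the two divergences in $\calL'$ yields
\[
\calL' f = -v \cdot \nabla f - 2Qf - \nabla Q \cdot \nabla \Delta^{-1} f.
\]
All coefficients $v, Q, \nabla Q$ are smooth and decay polynomially at infinity ($|y|^{-1}, |y|^{-2}, |y|^{-3}$ respectively), together with all their derivatives.

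Fix $R \gg 1$ and the cutoff $\chi_R$ from \eqref{cutoff function: defi}. I would set
\[
A_0 := -\calL_0 + \tfrac{1}{16} - (1-\chi_R)\calL', \qquad K := -\chi_R \calL',
\]
on $\calD(\calL_0|_{H^k})$, so that $A_0 - \tfrac{1}{16} + K = -\calL$ is immediate. For dissipativity, the coercivity \eqref{coercivity of L0} gives $\Re(-\calL_0 f + \tfrac{1}{16}f, f)_{H^k} \le -\tfrac{1}{4}\|f\|_{H^{k+1}}^2 + \tfrac{1}{16}\|f\|_{H^k}^2$. Using the decay of $v, Q, \nabla Q$ and the boundedness of $\nabla \Delta^{-1}$ on Sobolev spaces, one obtains $\|(1-\chi_R)\calL' f\|_{H^k} \lesssim R^{-1}\|f\|_{H^{k+1}}$, so Young's inequality absorbs this contribution into the coercive $-\tfrac{1}{4}\|f\|_{H^{k+1}}^2$ and produces $\Re(A_0 f, f)_{H^k} \le 0$ for $R$ large enough. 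Maximality reduces to solving the resolvent equation $(A_0 - R_0)f = g$ for some large $R_0$, which I would handle via a Neumann series: the smoothing estimate \eqref{eqresest} supplies a bounded $(\calL_0+\lambda)^{-1}: H^k \to H^{k+1}$, and the $R^{-1}$-smallness of $(1-\chi_R)\calL'$ ensures the perturbation $(\calL_0+\lambda)^{-1}(1-\chi_R)\calL'$ has operator norm less than one on $H^{k+1}$.

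The core technical step is the compactness of $K$. Writing
\[
Kf = \chi_R v \cdot \nabla f + 2\chi_R Q f + \chi_R \nabla Q \cdot \nabla \Delta^{-1} f,
\]
each summand has image supported in $\overline{B_{2R}}$. Because $\calD(\calL_0|_{H^k}) \subset H^{k+3/2}$ by Lemma \ref{lemclosmores} and $\nabla \Delta^{-1}$ gains one derivative, the three summands map $\calD(A_0)$ into $H^{k+\alpha}$ with compact support in $\overline{B_{2R}}$ for $\alpha = 1/2, 3/2, 1$ respectively; the Rellich-Kondrachov theorem then gives $K:\calD(A_0) \to H^k$ compact. The main obstacle is that $K$ contains the first-order term $\chi_R v \cdot \nabla$, which does not extend to a bounded operator on $H^k$ in isolation, so the compactness has to be read in the $A_0$-relative sense, namely that $K(\lambda-A_0)^{-1}: H^k \to H^k$ is compact as a bounded operator. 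This relative compactness is exactly the notion required by the abstract semigroup framework of \cite{MR4359478}, and the extra $\tfrac{3}{2}$ regularity of $\calD(\calL_0|_{H^k})$ encoded in \eqref{eqresest} is precisely what makes the Rellich argument go through.
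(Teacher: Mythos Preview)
Your cutoff decomposition does not prove the proposition as stated: the operator $K=-\chi_R\calL'$ contains the first-order transport piece $\chi_R v\cdot\nabla$, which loses a derivative and is therefore unbounded on $H^k$, let alone compact. You recognize this and retreat to $A_0$-relative compactness, but the proposition explicitly asks for $K$ compact on $H^k$; relative compactness is a different statement. So as a proof of the proposition there is a genuine gap.

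The paper avoids this by a different mechanism. It never tries to place the first-order part of $\calL'$ inside $K$. Instead it keeps $\calL_3'f=-\nabla\Delta^{-1}Q\cdot\nabla f$ entirely inside $A_0$ and exploits its sign after integration by parts, $\Re(\calL_3'g,g)_{L^2}=\tfrac12\int Q|g|^2\ge 0$, together with compactness of the commutator $[D^k,\calL_3']:H^{k+1}\to L^2$. The remaining pieces $\calL_1',\calL_2'$ are compact from $H^{k+1}$ to $H^k$, so one approximates them by finite-rank operators up to $\delta$-small remainders. The upshot is $\Re(\calL f,f)_{H^k}\ge \tfrac18\|f\|_{H^k}^2$ on a finite-codimension subspace, and $K$ is taken to be the \emph{finite-rank} correction coming from the excluded directions; this $K$ is manifestly compact on $H^k$. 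Your spatial-cutoff idea works fine for the dissipativity and maximality of $A_0$, but for the compactness of $K$ you need either the paper's finite-rank extraction or a reformulation of the proposition in terms of relative compactness.
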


\begin{remark}
    We recall that maximal dissipative operators are closed. Hence $\calD(\calL \big|_{H^k}) = \calD\left( \calL_0 \big|_{H^k} \right)$, and $\left(\calL \big|_{H^k}, \calD(\calL \big|_{H^k})\right)$ is closed on $H^k$.
\end{remark}

\mbox{}

Recall $\calL'$  from \eqref{linearized operator: def}. We begin by showing its compactness. Decompose $\calL'$ as
\begin{align}
    & \calL '  = \calL'_1+ \calL'_2 + \calL'_3,  \notag \\
    & \calL_1' f = -2 Qu, \quad \calL_2' f = - \nabla Q \cdot \nabla \Dein f, \quad \calL_3' f = - \na \Dein Q \cdot \nabla f.
\label{linarized operator: perturbation term}
\end{align}
We also define $\calC_{3, k}'$ by
\be
\calC_{3,k}' = [D^k, \calL_3'].
\label{commutator: definition}
\ee

\begin{lemma}[Compactness of  $\calL_1', \calL_2'$, $\calL_3'$ and $ \calC_3'$]
\label{lemma: compactness of the operator}
For any $k \ge 0$, the linear operators $\calL_1': H^{k+1}(\RR^3) \to H^k(\RR^3)$, $\calL_2': H^{k}(\RR^3) \to H^k(\RR^3)$, $\calL_3': H^{k+\frac{3}{2}}(\RR^3) \to H^k(\RR^3)$ and $\calC_{3,k}': H^{k+1}(\RR^3) \to L^2(\RR^3)$ are all compact.
\end{lemma}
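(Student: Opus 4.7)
The strategy is to realize each of the four operators as a norm limit of compact operators via a cut-off argument. For each operator $T$, let $V(y)$ denote its spatial coefficient---one of $Q$, $\nabla Q$, $\nabla\Delta^{-1}Q$, or the various $D^\alpha\nabla\Delta^{-1}Q$ factors that appear after expanding the commutator $\calC_{3,k}'$ by Leibniz---and decompose $V=\chi_R V+(1-\chi_R)V$ with $\chi_R$ as in \eqref{cutoff function: defi}. Let $T_R$ denote the operator obtained from $T$ by replacing $V$ with $\chi_R V$. It suffices to establish that (i) $T_R$ is compact for every $R>0$, and (ii) $\|T-T_R\|\to 0$ in the operator norm as $R\to\infty$. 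Since compact operators form a closed subspace in the operator norm topology, this forces $T$ to be compact.

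For (i), the range of $T_R$ is contained in the set of functions supported in $\overline{B(0,2R)}$, and the combination of the derivative gap between source and target plus the smoothing of the explicit differential/integral operators allows $T_R$ to factor as a bounded map into $H^{k+\delta}(B(0,2R))$ for some $\delta>0$. The compact Sobolev embedding $H^{k+\delta}(B(0,2R))\hookrightarrow H^k(B(0,2R))$---or $\hookrightarrow L^2(B(0,2R))$ in the case of $\calC_{3,k}'$---then delivers the compactness of $T_R$. Concretely: for $\calL_1'$ the gain is the extra derivative in $H^{k+1}$ via Leibniz; for $\calL_3'$ it is the $3/2$-derivative gap; for $\calC_{3,k}'$ each term of the commutator places at most $k$ derivatives on $f$ so $H^{k+1}$ provides a genuine gain; for $\calL_2'$ one invokes Hardy--Littlewood--Sobolev ($\nabla\Delta^{-1}\colon L^2\to L^6$) together with the $L^2$-boundedness of the second-order Riesz transform $\partial_i\partial_j\Delta^{-1}$ to extract a gain of one derivative after multiplication by the compactly supported $\chi_R\nabla Q$.

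For (ii), the explicit decay of the coefficients---$|D^j Q|\lesssim\la y\ra^{-2-j}$, $|D^j\nabla Q|\lesssim\la y\ra^{-3-j}$, and, via the explicit identity $\nabla\Delta^{-1}Q(y)=4y/(2+|y|^2)$ obtained from $\Delta^{-1}Q=2\log(2+|y|^2)$, also $|D^j\nabla\Delta^{-1}Q|\lesssim\la y\ra^{-1-j}$ for $j\ge 1$---implies that the relevant $L^\infty$ or $L^3$ norms of $D^j\bigl((1-\chi_R)V\bigr)$ vanish as $R\to\infty$. Feeding these into the same boundedness computation that drives (i), but now applied to $T-T_R$, yields the desired operator-norm convergence.

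The main technical obstacle is $\calL_2'$, because $\nabla\Delta^{-1}f$ fails to lie in $L^2$ for general $f\in L^2(\RR^3)$. The key observation is that, when expanding $D^m(\chi_R\nabla Q\cdot\nabla\Delta^{-1}f)$ by Leibniz, exactly one term places all derivatives on the coefficient and none on $\nabla\Delta^{-1}f$; this term must be handled via the Sobolev embedding $\|\nabla\Delta^{-1}f\|_{L^6}\lesssim\|f\|_{L^2}$ paired with the $L^3$-integrability of $D^m(\chi_R\nabla Q)$. All remaining terms carry at least one derivative on $\nabla\Delta^{-1}f$ and therefore reduce to second-order Riesz transforms, which are bounded on $L^2$. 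This dichotomy governs both the factorization through $H^{k+1}$ needed for (i) and the tail estimate needed for (ii).
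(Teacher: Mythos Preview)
Your proposal is correct and follows essentially the same approach as the paper: cut-off plus operator-norm approximation, Rellich--Kondrachov for the localized part, and coefficient decay for the tail. The treatment of the nonlocal operator $\calL_2'$ via the dichotomy---$L^3$-$L^6$ pairing (Sobolev/HLS) for the term with no derivatives on $\nabla\Delta^{-1}f$, second-order Riesz transforms for the rest---is exactly what the paper does in \eqref{nonlocal term}. The only cosmetic difference is that the paper invokes a ready-made compact-multiplier theorem for $\calL_1'$ and $\calL_3'$ in place of the explicit Rellich factorization you spell out.
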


\begin{proof}
    Firstly of all, we consider the compactness of $\calL_1':H^{k+1} \to H^k$. In fact, for any $R>0$, since $2\chi_R Q \in  C_0^\infty(\RR^3) \subset \mathscr{S}(\RR^3)$, by \cite[Theorem $1.68$]{MR2768550}, we see that the multiplier $f \mapsto \calL_{1,R}' f : = 2\chi_R Qf$ is compact from $H^{k+1} \to H^k$. In addition, there exists $C=C(k)>0$ such that
    \begin{align*}
        \| (1-\chi_R) Q f \|_{H^k} 
        & = \| (1-\chi_R) Q f \|_{L^2} + \sum_{|\alpha|=k}\| \partial^\alpha \left((1-\chi_R) Q f \right) \|_{L^2} \\
        &  \le C \| (1-\chi_R) Q \|_{W^{k,\infty}} \| f \|_{H^k} \le C R^{-p} \| f \|_{H^k}, \quad \forall f \in H^k,
    \end{align*}
    for some $p>0$ uniformly in $R>0$. Then by the standard diagonal method, $\calL_1'$ is compact from $H^{k+1}$ to $H^k$.

    Secondly, for the commutator term $\calC_{3,k}'$ defined in \eqref{L: decomposition}, it involves at most the $k$-th order derivative, then with the decay of the arbitrary derivative of $\na Q$, we can repeat the argument above to conclude that $\calC_{3,k}'$ is compact from $H^{k+1} \to H^k$. 

    Thirdly, we consider the compactness of $\calL_3': H^{k+\frac{3}{2}} \to H^k$. Recall the definition of $\calL_3'$ in \eqref{L: decomposition}, it suffices to prove that the multiplier  $\calL_3''f: = \na \Dein Q f$ is compact from $H^{k+\frac{1}{2}} \to H^k$. In fact, similar to the argument above, by \cite[Theorem $1.68$]{MR2768550} again, we can see that $\calL_{3,R}'' = \chi_R \na \Dein Q$ is a compact multiplier from $H^{k+\frac{1}{2}}$ to $H^k$. In addition, there exists a constant $C=C(k)>0$ such that
    \[
    \Big\| \left((1-\chi_R) \na \Dein Q \right) f \Big\|_{H^k} \le C \| (1-\chi_R) \na \Dein Q \|_{W^{k,\infty}} \| f \|_{H^k} \le C R^{-p} \| f \|_{H^k}, \; \forall f \in H^k,
    \]
    for some $p>0$ uniformly in $R>0$. Then by the standard diagonal argument again, $\calL_3'$ is compact from $H^{k+\frac{3}{2}}$ to $H^k$.

    Finally, let us consider the compactness of the nonlocal term $\calL_2':H^{k} \to H^k$. Then there exists a uniform constant $C=C(k)>0$, such that for any $R>0$, by H\"{o}lder's inequality and Sobolev inequality,
    \begin{align}
        & \quad \Big\| (1-\chi_R) \na Q \cdot \na \Dein f  \Big\|_{H^k} \notag \\
        & \le C \Big\| (1-\chi_R) \na Q \cdot \na \Dein f  \Big\|_{L^2} + C\sum_{|\alpha| =k} 
         \Big\| \partial^\alpha \left( (1-\chi_R) \na Q  \right) \cdot \na \Dein f  \Big\|_{L^2} \notag \\
         & \qquad  \qquad \qquad + C \sum_{|\alpha| < k, |\alpha| + |\beta| =k} \Big\| \partial^\alpha \left( (1-\chi_R) \na Q  \right) \cdot \na \Dein \partial^\beta f  \Big\|_{L^2} \notag \\
         & \le C \Big\| (1-\chi_R) \na Q \Big\|_{W^{k,3}} \| \na \Dein f \|_{L^6} + 
         C \Big\| (1-\chi_R) \na Q \Big\|_{W^{k,\infty}} \| \na \Dein f \|_{\dot H^1 \cap \dot H^k}  \notag\\
         & \le
         \begin{cases}
            C R^{-p} \| f \|_{H^{k-1}}, &  k \ge 1, \\
            C R^{-p} \|f \|_{L^2}, & k=0,
         \end{cases} \quad \forall f \in H^k,
         \label{nonlocal term}
    \end{align}
    for some $p>0$ independent of $R > 0$. By similar computation and $\nabla Q \in W^{k, 3} \cap W^{k,\infty}$, we have the boundedness 
    % there exists a constant $C(R)>0$ only depending on $R>0$ such that
    \[
    \| \chi_R \calL_2' f \|_{H^{k+1}} \le C \| f \|_{H^{k}}, \quad \forall f \in H^{k}, \,\, \forall R > 0.
    \]
    % (We need this estimate to be uniform in $R$ so as to apply diagonal argument. Losing $C(R)$ is not acceptible.)
    Then by Rellich-Kondrachov compactness theorem \cite[Theorem $9.16$]{MR2759829}, the operator $\chi_R \calL_2'$ is compact from $H^k$ to $H^k$. Consequently, using the diagonal argument again, we obtain that $\calL_2': H^k \to H^k$ is compact.
\end{proof}

\begin{lemma}[Almost coercivity for $\calL'$] For any $k \in \NN$, and any $\delta >0$, there exist finite $(q_j)_{1 \le j \le N_0} \subset C_0^\infty(\RR^3;\CC)$ such that for $f \in H^{k+1}(\RR^3; \CC)$ with $f \perp_{H^k} q_j$ for $1 \le j \le N_0$, we have
\be
    \Re (\calL'f,f)_{H^k} \ge 
    -\delta \| f \|_{H^{k+1}}^2.
    \label{L': smallness}
    \ee
\end{lemma}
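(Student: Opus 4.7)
The plan is to combine the compactness supplied by Lemma \ref{lemma: compactness of the operator} with an integration-by-parts trick that extracts a non-negative main contribution from the first-order operator $\calL_3'$. Since $\nabla \cdot (\nabla \Delta^{-1} Q) = Q$ and $Q \ge 0$ from its explicit form in \eqref{Q: definition}, integration by parts yields
\begin{equation*}
\Re(\calL_3' f, f)_{L^2} = -\tfrac{1}{2} \int \nabla \Delta^{-1} Q \cdot \nabla |f|^2 \, dy = \tfrac{1}{2} \int Q|f|^2 \, dy \ge 0.
\end{equation*}
Applying Leibniz' rule to expand $D^k(\calL_3' f) = \calL_3' D^k f + \calC_{3,k}' f$ and repeating the same IBP at top order on each term $(\calL_3' \partial^\alpha f, \partial^\alpha f)_{L^2}$ with $|\alpha|=k$, I would obtain
\begin{equation*}
\Re(\calL_3' f, f)_{H^k} = \tfrac{1}{2} \int Q |f|^2 \, dy + \tfrac{1}{2} \int Q |D^k f|^2 \, dy + \Re (\calC_{3,k}' f, D^k f)_{L^2} \ge \Re (\calC_{3,k}' f, D^k f)_{L^2}.
\end{equation*}
Thus the task reduces to bounding each of $|\Re(\calL_1' f, f)_{H^k}|$, $|\Re(\calL_2' f, f)_{H^k}|$, and $|\Re(\calC_{3,k}' f, D^k f)_{L^2}|$ by $\tfrac{\delta}{3} \|f\|_{H^{k+1}}^2$ modulo finite codimension.

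Next, I would invoke the compactness of $\calL_1', \calL_2': H^{k+1} \to H^k$ and $\calC_{3,k}': H^{k+1} \to L^2$ from Lemma \ref{lemma: compactness of the operator} (noting that $\calL_2': H^k \to H^k$ compact implies $\calL_2': H^{k+1} \to H^k$ compact via the bounded inclusion $H^{k+1} \hookrightarrow H^k$), and use the singular-value decomposition of compact operators between Hilbert spaces to approximate each in operator norm by a finite-rank truncation $T_N = \sum_{j \le N} s_j (\cdot, \phi_j)_{H^{k+1}} \psi_j$, with tail $s_{N+1}$ as small as desired. The truncated forms then factor as finite sums of products of bounded linear functionals: $f \mapsto (f, \phi_j)_{H^{k+1}}$ on one side, and on the other either $f \mapsto (\psi_j, f)_{H^k}$ (for $\calL_1', \calL_2'$) or $f \mapsto (\psi_j, D^k f)_{L^2}$ (for $\calC_{3, k}'$); the latter is continuous on $H^k$ since $\|D^k f\|_{L^2} \le \|f\|_{H^k}$.

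Third, via Riesz representation in $H^k$, each of these $H^k$-bounded functionals equals $(Q_j, f)_{H^k}$ for some $Q_j \in H^k$. Using the density of $C_0^\infty(\RR^3; \CC)$ in $H^k$, I would approximate each $Q_j$ by $q_j \in C_0^\infty(\RR^3; \CC)$ with $\|Q_j - q_j\|_{H^k} < \eta$ for $\eta$ small enough. Then, for $f$ orthogonal in $H^k$ to all such $q_j$ (with $N_0$ the total count summed across the three compact operators), one has $|(Q_j, f)_{H^k}| \le \eta \|f\|_{H^k} \le \eta \|f\|_{H^{k+1}}$, so each factored form is bounded by $\eta \cdot \big(\sum_{j \le N} s_j\big) \cdot \|f\|_{H^{k+1}}^2$. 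Combining with the truncation-tail errors $s_{N+1} \|f\|_{H^{k+1}}^2$ and summing the three contributions, I would obtain $\Re(\calL' f, f)_{H^k} \ge -\delta \|f\|_{H^{k+1}}^2$.

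The main obstacle is the integration-by-parts identity for $\Re(\calL_3' f, f)_{H^k}$: one needs to carefully expand $D^k(\calL_3' f)$ via Leibniz' rule and then perform IBP at top order to isolate the non-negative term $\tfrac{1}{2} \int Q |D^k f|^2$ alongside the compact commutator $\calC_{3,k}'$. The subsequent Riesz-representation and $C_0^\infty$-approximation steps are essentially bookkeeping, but must be carried out carefully to keep $q_j \in C_0^\infty$ and to match the truncation level $N$ with the approximation parameter $\eta$ so that every piece contributes at most $\tfrac{\delta}{3} \|f\|_{H^{k+1}}^2$.
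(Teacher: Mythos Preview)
Your proposal is correct and follows essentially the same strategy as the paper: integration by parts on $\calL_3'$ to extract the non-negative term $\tfrac12\int Q|D^j f|^2$ plus the compact commutator $\calC_{3,k}'$, then finite-rank approximation of the compact pieces $\calL_1',\calL_2',\calC_{3,k}'$ combined with Riesz representation and $C_0^\infty$-density to produce the $H^k$-orthogonality directions $q_j$. The only minor technical divergence is that the paper converts the \emph{input} side of the finite-rank operator (the $H^{k+1}$ inner products $(f,\tilde q_j)_{H^{k+1}}$) into $H^k$ inner products via integration by parts against smooth test functions, so that $f\perp_{H^k} q_j$ kills $\calT_{1,2,\delta}f$ outright; you instead target the \emph{output} side, making each factor $(\psi_j,f)_{H^k}$ small under the orthogonality and bounding the remaining $H^{k+1}$ factor by $\|f\|_{H^{k+1}}$, which yields the same $\delta\|f\|_{H^{k+1}}^2$ control.
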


\begin{proof}
    By Lemma \ref{lemma: compactness of the operator}, $\calL_1' + \calL_2'$ is compact from $H^{k+1}$ to $H^k$, then  {it yields a finite-rank operator $\tilde \calT_{1,2,\delta}: H^{k+1} \to H^k$}, such that $\tilde \calR_{1,2,\delta}= \calL_1' + \calL_2' - \tilde \calT_{1,2,\delta}:H^{k+1} \to H^k$ is bounded with $\| \tilde \calR_{1,2,\delta} \|_{H^{k+1} \to H^k} \le \frac{\delta}{8}$. In particular, there exists $\{{\tilde {q}}_j \}_{1 \le j \le N_{1,2}} \subset H^{k+1}$ and $\{ p_j \}_{1 \le j \le N_{1,2}} \subset H^k$ such that $\tilde \calT_{1,2,\delta}$ can be represented by
	\[
	\tilde \calT_{1,2,\delta} = \sum_{j=1}^{N_{1,2}} \left( \cdot , {\tilde {q}} _j \right)_{H^{k+1}} p_j.
	\]
  By the density of $C_0^\infty$ in $H^{k+1}$, we introduce $\{\tilde {\tilde q}_j\}_{1 \le j \le N_{1,2}} \subset C_0^\infty$ such that $\tilde {\tilde{\calT}}_{1,2,\delta} = \sum_{j=1}^{N_{1,2}} (\cdot, {\tilde {\tilde{q}}}_j)_{H^{k+1}} p_j$ satisfies
 \[
 \| \tilde \calT_{1,2,\delta} -\tilde {\tilde{\calT}}_{1,2,\delta} \|_{H^{k+1} \to H^k} \le \frac{\delta}{8}.
 \]
Then by integration by parts, Riesz's representative theorem and the density of $C_0^\infty$ in $H^k$, we can find $\{q_j\}_{1\le j \le N_{1,2}} \subset C_0^\infty \subset H^k$ such that the finite-rank operator $\calT_{1,2,\delta} = \sum_{j=1}^{N_{1,2}} (\cdot,q_j)_{H^k} p_j$ satisfies
\[
\| \calT_{1,2,\delta} -  \tilde {\tilde{\calT}}_{1,2,\delta}\|_{H^{k+1} \to H^k} \le \frac{\delta}{8}.
\]
Hence, combining all of the estimates above and together with the  triangle inequality, $\calR_{1,2,\delta} = \calL_1' + \calL_2' - \calT_{1,2,\delta}$ satisfies 
\[
\| \calR_{1,2,\delta} \|_{H^{k+1} \to H^k}  \le \frac{\delta}{2}.
\]

Similarly, since $\calC_{k,3}'$ is compact from $H^{k+1}$ to $L^2$ derived from Lemma \ref{lemma: compactness of the operator}, repeating the argument above, we can find sequences $(q_j)_{N_{1,2}+1}^{N_{1,2}+ N_3} \subset C_0^\infty$ and $(p_j)_{N_{1,2}+1}^{N_{1,2}+ N_3} \subset L^2$, such that
\[
\calT_{3,\delta} = \sum_{N_{1,2}+1}^{N_{1,2}+ N_3} (\cdot, q_j)_{H^k} p_j,
\]
and the residual $ \calR_{3,\delta} = \calT_{3,\delta} -\calC_{3,k}'$ satisfies $\| \calR_{3,\delta} \|_{H^{k+1} \to L^2}\le \frac{\delta}{2}$. 

Additionally, we have the coercivity of $\calL_3'$ under the $L^2$ inner product as follows:
\[
\Re (\calL_3'  f,  f)_{L^2} = -\Re \int \left( \na \Dein Q \cdot \nabla f \right) \overline{f} dy =\frac{1}{2} \int Q | f|^2 dy \ge 0. 
\]
Consequently, combining all the argument above, for any $f\in H^{k+1}$ satisfying $f \perp_{H^k} q_j$ for any $1 \le j \le N_0=N_{1,2} +N_3$, we have $\calT_{1,2,\delta} f = \calT_{3,\delta} f = 0$, hence
\begin{align*}
    & \quad \Re (\calL'f,f)_{H^k(\RR^3)}
    = \Re ((\calL_1' + \calL_2')f,f)_{H^k(\RR^3)} + \Re(\calL_3' f,f)_{L^2(\RR^3)} + \Re(D^k \calL_3' f, D ^k f)_{L^2(\RR^3)}  \\
    & = \Re ((\calL_1' + \calL_2')f,f)_{H^k(\RR^3)} + \Re(\calC_{3,k}' f, D^k f)_{L^2(\RR^3)} + \Re(\calL_3' f,f)_{L^2(\RR^3)} + \Re(\calL_3' D^k f, D^k f)_{L^2(\RR^3)}  \\
    & \ge \Re ((\calL_1' + \calL_2')f,f)_{H^k(\RR^3)} + \Re(\calC_{3,k}' f, D^k f)_{L^2(\RR^3)} \\
    & =  \Re (\calR_{1,2,\delta}f,f)_{H^k(\RR^3)} + \Re(\calR_{3,\delta} f, D^k f)_{L^2(\RR^3)}
    \ge -\delta\| f\|_{H^{k+1}}^2.
\end{align*}
\end{proof}

\mbox{}

Now we are in place of proving Proposition \ref{prop: decomposition of L}.

\begin{proof}[Proof of Proposition \ref{prop: decomposition of L}] \underline{\textit{Step 1. Dissipativity of $-\calL$ modulo finite dimensions.}} We claim that there exist $\{q_i\}_{i=1}^N \subset C_0^\infty(\RR^3)$ with $(q_i, q_j)_{H^k} = \delta_{ij}$, and $C>0$ such that for $f \in \calD(\calL_0 \big|_{H^k})$,
    \be
    \Re (\calL f, f)_{H^k(\RR^3)}  \ge \frac{1}{8} \| f \|_{H^k(\RR^3)}^2 - C \sum_{j=1}^N |(f,q_j)_{H^k}|^2.
    \label{L: coercivity another form}
    \ee

In fact, note that Lemma \ref{lemma: compactness of the operator} and direct estimate of $\calL_3'$ from  \eqref{linarized operator: perturbation term} imply 
\be \| \calL'  \|_{ H^{k+1} \to H^{k}} \le C,
\label{eqestL'}
\ee
for some $C=C(k)>0$. Thus the LHS of \eqref{L: coercivity another form} is well-defined for $u \in \calD(\calL_0 \big|_{H^k}) \subset H^{k+\frac 32}$ (from Lemma \ref{lemclosmores}). Applying (\ref{coercivity of L0}) and (\ref{L': smallness}) with $\delta=\frac{1}{16}$, there exists $(q_{j})_{j=1}^{N} \subset C_0^\infty(\RR^3)$ such that
    \be
    \Re (\calL f, f )_{H^k(\RR^3)} \ge \frac{3}{16} \| u \|_{H^{k+1}(\RR^3)}^2,
    \label{coercivity of L}
    \ee
    for any $f$ satisfying  $f \perp_{H^{k}} q_j, \; \forall\;  1 \le j \le N$. Without loss of generalization, we assume that the $\{q_j\}$ obtained previously satisfies the orthogonal condition $(q_i, q_j)_{H^k} =\delta_{ij}$, otherwise we can use the Gram-Schmidt orthonormalization process to realize it.

Thus for any $f \in H^k$, if we decompose $f$ as
\[
f= f_1 + f_2, \quad \text{ where } f_2 =  \sum_{j=1}^{N} (f, q_j)_{H^k} q_j, \;  f_1 \in \text{span} \{ q_1, q_2, ..., q_{N} \}^\perp, \text{ and } f_1 \perp_{H^k} f_2,
\]
then by Cauchy-Schwarz inequality, Young's inequality and the fact that $q_j \in C_0^\infty(\RR^3)$, we conclude that
\begin{align*}
    \Re (\calL f,f)_{H^k}
    & = \Re (\calL f_1 ,f_1)_{H^k} + \Re (\calL f_1 ,f_2)_{H^k}
    + \Re(\calL f_2 ,f_1)_{H^k} + \Re(\calL f_2,f_2)_{H^k} \\
    & \ge \frac{3}{16} \| f \|_{H^k}^2 - \frac{1}{16}\| f_1\|_{H^k}^2
    - C\left( \| \calL f_2\|_{H^k}^2 +  \| \calL^* f_2\|_{H^k}^2 +\| f_2\|_{H^k}^2    \right) \\
    & = \frac{1}{8} \| f \|_{H^k}^2 - C\left( \| \calL f_2\|_{H^k}^2 +  \| \calL^* f_2\|_{H^k}^2 +\| f_2\|_{H^k}^2    \right)  \\ 
    & \ge \frac{1}{8} \| f \|_{H^k}^2 - C \sum_{j=1}^N |(f,q_j)_{H^k}|^2,
\end{align*}
for some universal $C>0$, which is definitely \eqref{L: coercivity another form}. 

\mbox{}

\noindent \underline{\textit{Step 2. Construction of $A_0$ and end of proof.}} With $(q_j)_{1 \le j \le N}$ and $C>0$ from Step 1, we define $A_0$ and $K$ as
\[
K= C \sum_{j=1}^N (\cdot,q_j)_{H^k} q_j, \quad  \quad A_0 = -\calL + \frac{1}{16} - K: \calD(\calL_0 \big|_{H^k}) \to H^k(\RR^3; \CC).
\]
Immediately, $K$ is compact on $H^k(\RR^3;\CC)$ and $A_0$ is well-defined on $\calD(\calL_0 \big|_{H^k})$ due to \eqref{eqestL'} and Lemma \ref{lemclosmores}. The decomposition \eqref{L: decomposition} comes from the definition of $A_0$, and the dissipativity follows (\ref{L: coercivity another form}) directly. 

It remains to prove maximality \eqref{eqA0max}. Noticing that
\[
-A_0 -\lambda= \calL_0 + \calL' - \frac{1}{16} +K -\lambda = \left( I + (\calL'+K) \left(\calL_0 - \frac{1}{16} - \lambda \right)^{-1} \right) \left(\calL_0 - \frac{1}{16} - \lambda \right),
\]
then from resolvent estimate (\ref{eqresest}) and boundedness of $\calL'$ \eqref{eqestL'}, we can choose $\lambda \ll -1$ such that $\big\| (\calL'+K) \left(\calL_0 - \frac{1}{16} - \lambda \right)^{-1} \big\|_{H^k \to H^k} \ll 1$. This implies $\left( I + (\calL'+K) \left(\calL_0 - \frac{1}{16} - \lambda \right)^{-1} \right)$ is invertible on $H^k(\RR^3;\CC)$, and thus $(-A_0-\lambda)^{-1}: H^k(\RR^3; \CC) \to D\left( \calL\big|_{H^k}\right)$ is well-defined and bounded, which yields \eqref{eqA0max}, then we have concluded the proof.
\end{proof}

\mbox{}

\subsection{Spectral properties of $-\calL$}
\label{subsection: spectral of L}

As a result of the decomposition (\ref{L: decomposition}), we conclude the following spectral properties of $-\calL$ via the abstract semigroup theory from \cite{MR4359478}.

\begin{proposition}[{\cite[Lemma 3.3, Lemma 3.4]{MR4359478}} Spectral properties of $-\calL$ on $H^k(\RR^3; \CC)$]
\label{proposition: spectral properties of L}
    For $k \ge 0$, consider $-\calL: D(\calL\big|_{H^k}) \subset H^k(\RR^3; \CC) \to H^k(\RR^3; \CC)$ given in (\ref{linearized operator: def}) and any $\delta \in (0, \frac{1}{16})$. There exists $0 < \delg < \delta$ such that the following spectral properties of $-\calL$ hold\footnote{The choice of $\delta_g$ is similar to the proof of \cite[Lemma 3.4]{MR4359478}. For instance, we can take
    \[\epsilon_g = \inf \left( \left\{ \Re \l + \frac{\delta} 2: \l \in \sigma(-\calL), 0 > \Re \l >  -\frac{\delta}{2}  \right\} \cup \left\{\frac\delta 2 \right\} \right) \in \left(0, \frac \delta 2\right], \]
    to be the length of the spectral gap on the right of $-\frac \delta 2$, and then let $-\frac \delg 2 = -\frac \delta 2 + \frac {\epsilon_g}2$.}:

    \noindent $\bullet$ The set $\Lambda_{\calL} = \sigma(-\calL) \cap \{ \lambda \in \CC: \Re \lambda > - \frac{\delg}{2} \}$ is finite and formed only by eigenvalues of $-\calL$. Furthermore, the generalized eigenspace
    \[
    V = \bigoplus_{\lambda \in \Lambda_{\calL}} \ker (-\calL -\lambda )^{\mu_\lambda},
    \]
    is finite-dimensional, where $\mu_\lambda$ is the smallest integer such that $\ker (-\calL -\lambda  )^{\mu_\lambda} = \ker (-\calL -\lambda  )^{\mu_\lambda +1}$.

\noindent $\bullet$ Consider $-\calL^*= A_0^* -\frac{1}{16}+ K^*$ and $\Lambda_{\calL}^* = \sigma(-\calL^*) \cap \{ \lambda \in \CC: \Re \lambda > - \frac{\delg}{2} \}$. Similarly, we define the corresponding generalized eigenspace by
\[
V^* = \bigoplus_{\lambda \in \Lambda_{\calL}^*} \ker (-\calL^* - \lambda)^{\mu_\lambda^*},
\]
with $\Lambda_{\calL}^* = \bar\Lambda_{\calL}$ and $\mu_\lambda = \mu_{\bar \lambda}^*$. Let $V^{*\perp}$ be the orgothogonal complement of $V^*$ in $H^k(\RR^3; \CC)$. Then $V$ and ${V^*}^\perp$ are invariant under $-\calL$. And the whole space $H^k(\RR^3; \CC)$ can be decomposed by $H^k(\RR^3; \CC) = V \oplus {V^*}^\perp$.

\noindent $\bullet$ Define the spectral projection of $-\calL$ to the set $\Lambda_{\calL}$:
\be
P_u = \frac{1}{2\pi i}\int_\Gamma (z - (-\calL))^{-1} dz,
\label{Pu: definition}
\ee
where $\Gamma \subset \rho(-\calL)$ is an arbitrary anti-clockwise contour containing the set $\Lambda_{\calL}$. Then $V = {\rm Ran} P_u$ and $V^{*\perp} = \ker P_u = {\rm Ran} (1-P_u)$.\footnote{This statement is contained in the proof of 
\cite[Lemma 3.3]{MR4359478}.}

\noindent $\bullet$ (Stable part) $-\calL$ generates a semigroup, which we denote by $e^{-\tau \calL}$, and it satisfies
\be
\| e^{-\tau \calL} v\|_{H^k} \le C e^{-\frac{1}{2} \delg \tau} \| v \|_{H^k}, \quad \forall v \in V^{* \perp},\quad \tau \ge 0,
\label{decay rate: stable part}
\ee
for some constant $C=C(k)>0$ uniformly in $v \in V^{*\perp}$ and $\tau \ge 0$.
\end{proposition}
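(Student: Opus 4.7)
The plan is to apply the abstract functional-analytic framework developed in \cite[Lemma 3.3, Lemma 3.4]{MR4359478}, which handles precisely the class of operators arising as compact perturbations of maximally dissipative generators on a Hilbert space. Proposition \ref{prop: decomposition of L} has already verified the structural hypothesis $-\calL = A_0 - \frac{1}{16} + K$ with $A_0$ maximally dissipative and $K$ compact on $H^k(\RR^3; \CC)$, so the task reduces to running through their construction and unpacking the statement in our concrete setting.

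First, by Lumer-Phillips applied to $A_0 - \frac{1}{16}$, we obtain generation of a strongly continuous contraction semigroup with operator norm bounded by $e^{-\tau/16}$; the bounded perturbation theorem \cite[Chap. III, Thm. 1.3]{engel2000one} then upgrades this to generation of $e^{-\tau \calL}$ by $-\calL$. Compactness of $K$ together with Weyl's theorem on stability of the essential spectrum yields $\sigma_{\mathrm{ess}}(-\calL) \subset \{\Re\lambda \le -\tfrac{1}{16}\}$, so in the right half-plane $\{\Re \lambda > -\tfrac{\delta}{2}\}$ (with $\delta < \tfrac{1}{16}$) the spectrum of $-\calL$ consists only of isolated eigenvalues of finite algebraic multiplicity, without accumulation. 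We can therefore choose $\delta_g \in (0, \delta)$ so that the vertical line $\{\Re \lambda = -\delta_g/2\}$ lies entirely in $\rho(-\calL)$, which gives finiteness of $\Lambda_{\calL}$ and, via the Dunford-Riesz contour integral (\ref{Pu: definition}), a finite-rank spectral projection $P_u$ with $\mathrm{Ran}\, P_u = V$. The same analysis applied to $-\calL^* = A_0^* - \tfrac{1}{16} + K^*$ produces $V^*$, and the duality identity $\ker P_u = (\mathrm{Ran}\, P_u^*)^\perp = V^{*\perp}$ yields the $-\calL$-invariant topological direct sum $H^k(\RR^3; \CC) = V \oplus V^{*\perp}$.

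The main technical point, and the step I expect to be the main obstacle, is the exponential decay (\ref{decay rate: stable part}) on $V^{*\perp}$, since on a Hilbert space a spectral bound on the generator does not by itself imply a matching semigroup bound. Following \cite[Lemma 3.4]{MR4359478}, one appeals to a Gearhart-Pr\"uss type argument: it suffices to establish uniform resolvent bounds $\sup_{s \in \RR} \|(-\calL + \tfrac{\delta_g}{2} - is)^{-1}\|_{H^k \to H^k} < \infty$ on the vertical line $\{\Re \lambda = -\delta_g/2\}$. These come from the factorization
\[
(-\calL - \lambda)^{-1} = \bigl(A_0 - \tfrac{1}{16} - \lambda\bigr)^{-1} \bigl(I + K(A_0 - \tfrac{1}{16} - \lambda)^{-1}\bigr)^{-1},
\]
where the first factor is controlled by the maximal dissipativity of $A_0$, and the second by analytic Fredholm theory combined with the norm decay $\|K(A_0 - \tfrac{1}{16} - \lambda)^{-1}\|_{H^k \to H^k} \to 0$ as $|\mathrm{Im}\,\lambda| \to \infty$; this last decay uses compactness of $K$ together with the smoothing resolvent behavior from Lemma \ref{lemclosmores}. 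Once the uniform resolvent bound is in place, the Hilbert-space Gearhart-Pr\"uss theorem delivers (\ref{decay rate: stable part}) with the claimed rate, completing the proof.
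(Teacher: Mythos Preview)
Your proposal is correct and matches the paper's approach exactly: the paper does not give its own proof but simply invokes \cite[Lemma 3.3, Lemma 3.4]{MR4359478} once Proposition~\ref{prop: decomposition of L} has supplied the structural decomposition $-\calL = A_0 - \tfrac{1}{16} + K$, and your outline is a faithful sketch of how that abstract machinery runs (Lumer--Phillips, bounded perturbation, Weyl, Riesz projection, Gearhart--Pr\"uss via analytic Fredholm). One small inaccuracy: the norm decay $\|K(A_0 - \tfrac{1}{16} - \lambda)^{-1}\| \to 0$ as $|\mathrm{Im}\,\lambda| \to \infty$ does not come from Lemma~\ref{lemclosmores}, which concerns $\calL_0$ rather than $A_0$; in the abstract argument it follows purely from compactness of $K$ together with the uniform resolvent bound and strong convergence to zero of $(A_0 - \tfrac{1}{16} - \lambda)^{-1}$ along the vertical line.
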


Notice that $\calL$ is real-valued operator, we have \be \overline{\calL f} = \calL \bar f,\quad f \in \calD\left( \calL\big|_{H^k(\RR^3; \CC)} \right),
\label{eqsymcalL}
\ee 
where $\bar f$ is the complex conjugation of $f$. 
This symmetry implies the following structure of unstable spectrum $\Lambda_{\calL}$.

\begin{proposition}[Structure of unstable spectrum on $H^k(\RR^3; \RR)$]
\label{Proposition: unstable mode}
For $k \ge 0$, and any $\delg>0$ determined in Proposition \ref{proposition: spectral properties of L}, the following statements of  $-\calL$ are true.
\mbox{}

  \begin{enumerate}
      \item Symmetry of unstable spectrum: $\Lambda_{\calL}$ is symmetric with respect to the real axis, namely we have
      \be  \Lambda_{\calL} \cap \RR = \{ \l_j\}_{j=1}^N,\quad \Lambda_{\calL} \backslash \RR = \{ \xi_j, \bar \xi_j\}_{j=1}^J. \label{spectral set of L} \ee
      Moreover, \textcolor{black}{there exists $\{ v_j \}_{j=1}^{M_1} \subset H^k(\RR^3; \RR) $ 
      such that
      \[
      \bigoplus_{\lambda \in \Lambda_{\calL} \cap \RR} \ker (\calL - \lambda )^{\mu_\lambda} = \text{span}_\CC \{ v_j \}_{j=1}^{M_1}.
      \]
      }

      \item Symmetry of Riesz projection on $H^k(\RR^3; \RR)$: There exists $\varphi_j, \psi_j \in H^k(\RR^3;\RR)$ for $j = 1, ..., N$ such that
      \be (\varphi_i, \psi_{j})_{H^k} = \delta_{i,j}, \label{eqorthovarphipsi}
      \ee
      and for real-valued function $f \in H^k(\RR^3; \RR)$,
      \be P_u f = \sum_{i = 1}^N (f, \psi_i)_{H^k} \varphi_j \in H^k(\RR^3; \RR). \label{eqPureal}
      \ee

      \item Stable/unstable decomposition of $H^k(\RR^3;\RR)$: Define 
      \[ H^k_u := {\rm Ran} \left(P_u\big|_{H^k(\RR^3;\RR)}\right),\quad
      H^k_s := {\rm Ran} \left((1-P_u)\big|_{H^k(\RR^3;\RR)}\right).  
      \]
      Then $H^k_u = V \cap H^k(\RR^3; \RR)$, $H^k_s =V^{*\perp} \cap H^k(\RR^3; \RR)$; they are invaraint under $-\calL$; and thus
      $H^k(\RR^3;\RR)$ can be decomposed by
      $
      H^k(\RR^3;\RR) = H^k_u \oplus H^k_s$.
      
      \item Real Jordan normal form of $(-\calL)\big|_{H^k_u}$:  The linear transformation $-\calL|_{H^k_u}$ has all its eigenavlues lying in $\{ z \in \CC: \Re z > -\frac{\delg}{2} \}$. Moreover, there exists a basis of $H^k_u$ such that it can be expressed as a real matrix $J = \text{diag} \{ J_1, J_2,..., J_l \}$,
and here $J_i$ must be one the following two forms:
\begin{align}
    J_i^{Re}=
    \begin{bmatrix}
    \lambda_i & \frac{\delg}{10} & & \\
    & \lambda_i & \ddots & \\
    & & \ddots & \frac{\delg}{10} \\
    & & & \lambda_i
    \end{bmatrix} 
    \quad
    \text{ or }
    \quad 
     J_i^{Im}=
    \begin{bmatrix}
    C_i & \frac{\delg}{10} I_2 & & \\
    & C_i & \ddots & \\
    & & \ddots & \frac{\delg}{10} I_2 \\
    & & & C_i,
    \end{bmatrix},
    \label{Jordan normal form}
    %\text{ when } \lambda_i = a_i+ib_i \in \Lambda \setminus \RR,
\end{align}
where $C_i = \begin{bmatrix}
    a_i & - b_i \\
    b_i & a_i
\end{bmatrix}$
and $I_2 = \begin{bmatrix}
    1 & 0 \\ 0 & 1
\end{bmatrix}$ are the $2 \times 2$ real matrices, where $\l_i, a_i > -\frac{\delta_g}{2}$. 
In particular, we have
\be
w^T J w \ge - \frac{6 \delg }{10} |w|^2, \quad w \in \RR^N.
\label{increase rate: unstable part}
\ee
  \end{enumerate}    
\end{proposition}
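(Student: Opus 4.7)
\textbf{Proof plan for Proposition \ref{Proposition: unstable mode}.} The master tool throughout is the reality symmetry \eqref{eqsymcalL}: the operator $\calL$ commutes with complex conjugation. I would first observe that if $\l \in \Lambda_{\calL}$ is an eigenvalue with generalized eigenvector $v$, then applying conjugation to $(-\calL-\l)^m v = 0$ yields $(-\calL - \bar\l)^m \bar v = 0$, so $\bar\l \in \Lambda_{\calL}$ with eigenvector $\bar v$ and the same Jordan length. This gives the symmetry of $\Lambda_{\calL}$ in \eqref{spectral set of L}. For a real eigenvalue $\l_j$, if $v$ is a generalized eigenvector, so are $\Re v$ and $\Im v$; hence the generalized eigenspaces associated with real eigenvalues admit a real basis $\{v_j\}_{j=1}^{M_1} \subset H^k(\RR^3;\RR)$, proving (1).

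For (2), I would choose the contour $\Gamma$ in \eqref{Pu: definition} so that it is symmetric with respect to the real axis and still encloses $\Lambda_{\calL}$, which is possible by (1). Since $\overline{(z+\calL)^{-1} f} = (\bar z + \calL)^{-1} \bar f$ from \eqref{eqsymcalL}, a change of variables $z \mapsto \bar z$ in the contour integral gives $\overline{P_u f} = P_u \bar f$. Hence $P_u$ preserves $H^k(\RR^3; \RR)$. Since $V = \mathrm{Ran}\, P_u$ is finite-dimensional and $P_u$ is conjugation-invariant, $V$ admits a real basis $\{\varphi_i\}_{i=1}^N \subset H^k(\RR^3;\RR)$. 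The adjoint $P_u^*$ is likewise conjugation-invariant (by the same contour argument applied to $-\calL^*$), so $V^* = \mathrm{Ran}\, P_u^*$ admits a real basis; after biorthogonalization within $V^*$ relative to $\{\varphi_i\}$, I obtain $\{\psi_i\}_{i=1}^N \subset H^k(\RR^3;\RR)$ satisfying \eqref{eqorthovarphipsi} and the formula \eqref{eqPureal} for real $f$.

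Assertion (3) then follows immediately: for $f \in H^k(\RR^3;\RR)$ we have $P_u f, (1-P_u) f \in H^k(\RR^3;\RR)$, and invariance under $-\calL$ inherits from the invariance of $V$, $V^{*\perp}$ stated in Proposition \ref{proposition: spectral properties of L}. For (4), I apply the real Jordan decomposition theorem to the finite-dimensional real linear map $(-\calL)\big|_{H^k_u}$: its eigenvalues (complex, but occurring in conjugate pairs by (1)) all lie in $\{\Re z > -\delta_g/2\}$, and a real Jordan basis yields blocks of the two types in \eqref{Jordan normal form} but with superdiagonal entries $1$ (or $I_2$) rather than $\delta_g/10$. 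To reduce these entries to $\delta_g/10$, I apply the standard rescaling $w \mapsto D_\epsilon w$ with $D_\epsilon = \mathrm{diag}(1, \epsilon, \epsilon^2, \dots)$ for $\epsilon = \delta_g/10$, which conjugates each superdiagonal $1$ into $\epsilon$ while leaving the diagonal untouched.

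The coercivity \eqref{increase rate: unstable part} is then a direct quadratic-form estimate on each block. For a real block $J_i^{Re}$ of size $n_i$,
\begin{equation*}
 w^T J_i^{Re} w = \l_i |w|^2 + \frac{\delta_g}{10}\sum_{j=1}^{n_i-1} w_j w_{j+1} \ge \left(\l_i - \frac{\delta_g}{10}\right)|w|^2 \ge -\frac{6\delta_g}{10}|w|^2,
\end{equation*}
using Young's inequality and $\l_i > -\delta_g/2$. For a complex block $J_i^{Im}$, the $2\times 2$ diagonal blocks $C_i$ are anti-symmetric plus $a_i I_2$, so $w^T C_i w = a_i(w_1^2+w_2^2)$, and the same Young estimate on the $\frac{\delta_g}{10} I_2$ superdiagonals gives the identical bound $\ge (a_i - \delta_g/10)|w|^2 \ge -6\delta_g/10 |w|^2$. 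Summing over blocks concludes \eqref{increase rate: unstable part} and the proposition.
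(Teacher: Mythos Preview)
Your proof is correct and follows essentially the same strategy as the paper: conjugation symmetry of $\calL$ for (1), contour-integral conjugation for (2), invariance inherited from $V$, $V^{*\perp}$ for (3), and real Jordan form plus diagonal rescaling for (4), with the identical block-by-block Young estimate for \eqref{increase rate: unstable part}.

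The one genuine difference is in part (2). The paper splits the Riesz projection as $P_u = P_{Re} + P_{Im+} + P_{Im-}$ according to whether the enclosed eigenvalues are real or lie in the upper/lower half-plane, verifies $\overline{P_{Im+} f} = P_{Im-} f$ and $\overline{P_{Re} f} = P_{Re} f$ separately, and then builds the real $\varphi_j$, $\psi_j$ explicitly from the real and imaginary parts of complex eigenbases $\{v_j\}$, $\{w_j\}$ and their duals $\{h_j\}$, $\{g_j\}$. Your route is more compact: by taking the single contour $\Gamma$ symmetric about the real axis you get $\overline{P_u f} = P_u \bar f$ in one step, then extract real bases of $V$ and $V^*$ abstractly and biorthogonalize. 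Both are valid; the paper's version is more constructive (one sees exactly which combinations of complex eigenmodes make up the real $\varphi_j$), while yours avoids the case-splitting and the bookkeeping of the orthogonality relations \eqref{orthogonal condition: basis}.
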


\begin{proof} 
(1) The symmetry of $\Lambda_{\calL}$ and thereafter the decomposition \eqref{spectral set of L} follow from that $(\calL -
\l)^n \varphi_\l = 0$ if and only if $(\calL - \bar \l)^n \bar \varphi_\l = 0$ due to  the symmetry of $\calL$ given in \eqref{eqsymcalL}. Taking a (complex-valued) basis $\{\tilde v_j \}_{1 \le j \le M_1} \subset H^k(\RR^3;\CC)$ of the real unstable generalized eigenspace $\bigoplus_{\lambda \in \Lambda_{\calL} \cap \RR} \ker(-\calL -\lambda)^{\mu_\lambda}$, the symmetry \eqref{eqsymcalL} implies that $\Re \tilde v_j, \Im \tilde v_j \in \bigoplus_{\lambda \in \Lambda_{\calL} \cap \RR} \ker(-\calL -\lambda)^{\mu_\lambda}$, from which we can choose a basis $\{ v_j \}_{j=1}^{M_1} \subset H^k(\RR^3; \RR)$.

\mbox{}

(2) Denote the Riesz projection onto $\bigoplus_{\lambda \in \Lambda_{\calL} \cap \{\pm \Im z >0 \}} \ker (\calL -\lambda)^{\mu_{\lambda}}$ by $P_{Im\pm}$ and the one onto $\bigoplus_{\lambda \in \Lambda_{\calL} \cap \RR} \ker (-\calL -\lambda)^{\mu_\lambda}$ by $P_{Re}$. By the definition of $P_{Im+}$ through the contour integration,
      \[
      P_{Im+} = \frac{1}{2 \pi i} \int_{\gamma_2} (\lambda-(-\calL))^{-1} d\lambda,
      \] 
      where $\gamma_2 \subset \rho(-\calL)$ is an anti-clockwise closed curve enclosing $\Lambda_{\calL} \cap \{ \Im z >0 \}$. So for real-valued $f$, 
     \begin{align}
       &\overline {P_{Im+}f} 
      = \overline {\frac{1}{2 \pi i} \int_{\gamma_2} (\lambda-(-\calL))^{-1} f d\lambda }
      = -\frac{1}{2 \pi i} \int_{\gamma_2} (\bar \lambda - (-\calL))^{-1} f d \bar \lambda \notag\\
      =& -\frac{1}{2 \pi i} \int_{\bar \gamma_2} (\lambda - (-\calL))^{-1}f  d \lambda= \frac{1}{2 \pi i} \int_{-\bar \gamma_2} (\lambda - (-\calL))^{-1} f d \lambda = P_{Im-} f
       \label{contour integral}
      \end{align}
    where $-\bar \gamma_2 \subset \rho(-\calL)$ is an anti-clockwise closed curve with its interior only containing $\Lambda_{\calL} \cap \{ \Im z <0 \}$ thanks to the symmetry of $\Lambda_{\calL}$. Similarly, we have  $\overline{P_{Re}f} = P_{Re}f$. 
      
    Now take $\{ w_j \}_{j=1}^{M_2}$ to be the basis of $  \bigoplus_{\lambda \in  \Lambda_{\calL} \cap \{z \in \CC: \Im z >0\}} \ker (-\calL -\lambda)^{\mu_\lambda}$, then $\Re w_j, \Im w_j$ are non-trivial since $\l \notin \RR$. By Riesz representative theorem, there exist $\{ h_j \}_{j=1}^{M_1}, \{ g_j\}_{j=1}^{M_2} \subset H^k(\RR^3; \CC)$ such that
      \[
      P_{Re} = \sum_{j=1}^{M_1} (\cdot, h_j)_{H^k} v_j \quad \text{ and } \quad P_{Im+} = \sum_{j=1}^{M_2} (\cdot, g_j)_{H^k} w_j,
      \]
      with $\{ v_j\}_{j=1}^{M_1} \subset H^k(\RR^3; \RR)$ from (1). Note that the symmetries \eqref{contour integral} and $\overline{P_{Re}f} = P_{Re}f$ also indicate that $P_{Im-} = \sum_{j=1}^{M_2} (\cdot, \bar g_j)_{H^k} \bar w_j$ on $H^k(\RR^3;\RR)$ and $h_j \in H^k(\RR^3;\RR)$. So
      the projection property of $P_{Re}$ and $P_{Im+}$ indicate that
      \be
      (v_i,h_j)_{H^k} = (w_i,g_j)_{H^k} =\delta_{i,j}, \; (v_i, g_j)_{H^k} = (w_i,h_j)_{H^k}= (\bar w_i, g_j)= 0, \quad \forall i,j.
      \label{orthogonal condition: basis}
      \ee
      As a corollary, $\Re h_j, \Re g_j, \Im g_j$ are non-trivial from the non-triviality of $v_i, \Re w_i$ and $\Im w_i$. 
      So with the symmetry of $P_{Im\pm}$ and $P_{Re}$, we compute for any real-valued function $f \in H^k(\RR^3;\RR)$,
      \begin{align*}
      	P_u f &= P_{Re}f  + P_{Im+}f + P_{Im-} f
      	= \frac{1}{2} \left( P_{Re} + \overline{P_{Re}} \right)f + \left( P_{Im+} + P_{Im-} \right) f \\
      	& =\sum_{j=1}^{M_1} (f, \Re h_j)_{H^k} v_j + \sum_{j=1}^{M_2} (f, g_j)_{H^k} w_j +  \sum_{j=1}^{M_2} (f, \bar g_j)_{H^k} \bar w_j \\
      	& = \sum_{j=1}^{M_1} (f, \Re h_j)_{H^k} v_j + 2\sum_{j=1}^{M_2} \left[(f, \Re g_j)_{H^k} \Re w_j + (f, \Im g_j)_{H^k} \Im w_j \right].
       \end{align*}
       This implies the choice of $\{ \varphi_j\}_{j=1}^N, \{ \psi_j\}_{j=1}^N \subset H^k(\RR^3;\RR)$ with $N = M_1 + 2M_2$ to realize \eqref{eqPureal}, and the orthonormal relation \eqref{eqorthovarphipsi} follows \eqref{orthogonal condition: basis}.

\mbox{}
 
 (3) For any $f \in H_u^k = \text{Ran}(P_u\big|_{H^k(\RR^3; \RR)}) \subset V$, there exists $g \in H^k(\RR^3; \RR)$ such that $f =P_u g$, then by (\ref{eqPureal}), $f \in H^k(\RR^3; \RR)$, hence $f \in V \cap H^k (\RR^3; \RR)$. Conversely, for any $f \in V \cap H^k(\RR^3;\RR)$, then by (\ref{eqPureal}) again, $f=P_u f \in H_u^k$. Hence $H_u^k = V \cap H^k(\RR^3;\RR)$. Likewisely, we can derive $H_s^k = V^{* \perp} \cap H^k(\RR^3; \RR)$.
 
When it comes to the invariance of $\calL$, since $P_u$ is commutable with $\calL$ from the definition of $P_u$ through the contour integral (\ref{Pu: definition}), for any $f \in H_u^k=V \cap H^k(\RR^3; \RR)$,
\[
\calL f = \calL P_u f = P_u \calL f \in H_u^k,
\]
hence $H_u^k$ is invariant under $-\calL$. Similarly, $H_s^k$ is invariant under $-\calL$.

\mbox{}

(4) 
From (2) and (3), the operator $-\calL \big|_{H^k_u}$ under the basis $\{ \varphi_j \}_{1 \le j \le N} \subset H^k(\RR^3, \RR)$ is a real matrix denoted by $A$, and its eigenvalue set ${\rm eig}(A) = \Lambda_{\calL} \subset \{ z\in \CC: \Re z > -\frac 12 \delta_g\}$. Hence its real Jordan normal form should be $\text{diag}\{ \tilde J_1, \tilde J_2, ..., \tilde J_l\}$, where $\tilde J_i$ is some blocks given by (\ref{Jordan normal form}) but simply replacing $\frac{\delg}{10}$ by $1$. If we introduce $D_i$ to be the matrix $D_i^{Re} = \text{diag} \{ 1, \delg/10, (\delg/10)^2... \}$ or $D_i^{Im} = \text{diag} \{ I_2, \frac{\delg}{10} I_2, \frac{\delg^2}{10^2} I_2, ... \}$, then
\[
\left(D_i^{Re} \right)^{-1} \tilde J_i^{Re} D_i^{Re} = J_i^{Re}, \text{ and }
\left(D_i^{Im} \right)^{-1} \tilde J_i^{Im} D_i^{Im} = J_i^{Im},
\]
which shows how we obtain (\ref{Jordan normal form}). To show (\ref{increase rate: unstable part}), it suffices to prove each block $\tilde J_i$ satisfies (\ref{Jordan normal form}). In fact, for the block corresponding to real-valued eigenvalue, given any $w \in \RR^{k_i}$ for $k_j= \text{Rank} J_i$, by Cauchy-Schwarz inequality,
\[
w^T \left(J_i^{Re} + \frac{6 \delg }{10} \right) w \ge \sum_{j=1}^{k_i} \frac{\delg}{10} w_i^2 - \frac{\delg}{10} \sum_{j=1}^{k_i-1} |w_j| |w_{j+1}| \ge 0.
\]
For the block corresponding to complex eigenvalue, notice that for any $v \in \RR^2$,
\[
v^T C_i v = [v_1, v_2] \left[ \begin{matrix}
    a_i & -b_i \\
    b_i & a_i
\end{matrix}
\right]
[v_1,v_2]^T
=a_i |v|^2,
\]
For any $w = [w_1^T, w_2^T, ..., w_{k_i}^T]^T \in \RR^{2 k_i}$, with $w_i \in \RR^2$,
we compute
\begin{align*}
    &w^T \left(J_i^{Im} + \frac{6 \delg}{10} I \right)w
    = \sum_{j=1}^{k_i} w_j^T  C_i w_j + \sum_{j=1}^{k_i-1} \frac{\delg}{10} w_j^T w_{j+1} + \frac{6 \delg}{10} w^T w \\
     =& \sum_{j=1}^{k_i} \left( a_i + \frac{6 \delg}{10} \right)|w_j|^2 + \sum_{j=1}^{k_i-1} \frac{\delg}{10} w_j^T w_{j+1} 
     \ge \frac{\delg}{10}\sum_{j=1}^{k_i} w_j^T w_j - \sum_{j=1}^{k_i-1} \frac{\delg}{10} |w_j| |w_{j+1}| \ge 0,
\end{align*}
then (\ref{increase rate: unstable part}) has been verified.
\end{proof}

\begin{lemma}[Smoothing effect of $P_u$]
\label{Lemma: smoothness of V}
For $k \ge 0$, and any $\delg>0$ determined in Proposition \ref{proposition: spectral properties of L}. For every $m \ge k$, the Riesz projection $P_u : H^k(\RR^3; \CC) \to V$ defined by \eqref{Pu: definition} satisfies the bound
\be
\| P_u f \|_{H^{m+1}} \le C \| f \|_{H^m},
\label{Pu: smoothness} 
\ee
for some universal constant $C=C(m)>0$. In particular, $V \subset H^\infty (\RR^3; \CC)$ and $H^k_u \subset H^\infty(\RR^3;\RR)$.
\end{lemma}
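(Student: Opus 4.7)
The strategy is to first establish $V \subset H^\infty(\RR^3; \CC)$ by a bootstrap argument using Lemma \ref{lemclosmores}, after which the quantitative bound \eqref{Pu: smoothness} follows from the finite-dimensionality of $V$.

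For the smoothness of $V$, I show by induction on $s$ that $V \subset H^s(\RR^3; \CC)$ for all $s \ge 0$. The base case $V \subset H^{k + \frac{3}{2}}$ is immediate from the inclusion $V \subset \calD(\calL|_{H^k}) = \calD(\calL_0|_{H^k}) \subset H^{k + \frac{3}{2}}$ via Proposition \ref{prop: decomposition of L} (and the remark thereafter) together with Lemma \ref{lemclosmores}. For the inductive step, assume $V \subset H^s$ for some $s \ge k + \frac{3}{2}$ and fix $v \in V$. Since $V$ is $\calL$-invariant (being a direct sum of generalized eigenspaces), $\calL v \in V \subset H^s$. Moreover, the decomposition \eqref{linarized operator: perturbation term} shows that $\calL'$ is a differential operator of order at most $1$ with smooth coefficients: $\calL'_1 f = -2Qf$ has order $0$; $\calL'_2 f = -\nabla Q \cdot \nabla \Dein f$ has order $0$ since $\nabla \Dein$ gains one derivative; and $\calL'_3 f = -\nabla \Dein Q \cdot \nabla f$ has order $1$. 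Thus $\| \calL' v \|_{H^{s-1}} \le C \| v \|_{H^s}$, which gives $\calL_0 v = \calL v - \calL' v \in H^{s-1}$. Applying Lemma \ref{lemclosmores} with $k$ there replaced by $s - 1$ yields $v \in H^{s + \frac{1}{2}}$, and iterating gains $\frac{1}{2}$ derivative each time, so $V \subset H^\infty$.

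For the quantitative bound, finite-dimensionality of $V$ ensures that all Sobolev norms are equivalent on $V$: for each $m \ge k$ there exists $C = C(m) > 0$ with $\| w \|_{H^{m+1}} \le C \| w \|_{H^k}$ for every $w \in V$. Combined with the boundedness of $P_u$ on $H^k$ (from its contour-integral definition in Proposition \ref{proposition: spectral properties of L}) and the continuous embedding $H^m \hookrightarrow H^k$, this gives
\[
\| P_u f \|_{H^{m+1}} \le C \| P_u f \|_{H^k} \le C \| P_u \|_{H^k \to H^k} \| f \|_{H^k} \le C(m) \| f \|_{H^m}.
\]
The real-valued assertion $H^k_u \subset H^\infty(\RR^3; \RR)$ then follows from $H^k_u = V \cap H^k(\RR^3; \RR) \subset V \subset H^\infty$ via Proposition \ref{Proposition: unstable mode}(3).

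The main technical subtlety is the use of Lemma \ref{lemclosmores} at the possibly fractional level $s - 1$ in the inductive step: it relies on the identification $\calD(\calL_0|_{H^{s-1}}) = \{ u \in H^{s-1} : \calL_0 u \in H^{s-1} \}$, equating the graph closure of $C_0^\infty$ under $\calL_0$ with the maximal realization on $H^{s-1}$. This is a standard fact for such linear differential operators, justifiable by mollification combined with a spatial cutoff to handle the polynomial growth of $\Lambda = 2 + y \cdot \nabla$, or equivalently by the resolvent identity and injectivity of $\calL_0 - \lambda$ for $\lambda$ sufficiently negative (using the resolvent estimate in Lemma \ref{lemclosmores}).
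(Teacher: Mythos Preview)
Your proof is correct and takes a genuinely different route from the paper. The paper proves the quantitative bound \eqref{Pu: smoothness} \emph{first}, by analyzing the resolvent $(\calL + \lambda)^{-1}$ along the contour $\Gamma$: writing $\calL + \lambda = (I + \calL'(\calL_0 + \lambda)^{-1})(\calL_0 + \lambda)$, it shows $\calL'(\calL_0 + \lambda)^{-1}$ is compact on $H^m$ (via Lemma~\ref{lemma: compactness of the operator}), invokes Fredholm's alternative to invert the first factor, and combines with the smoothing \eqref{eqresest} of $(\calL_0 + \lambda)^{-1}$. Smoothness of $V$ then follows by iterating $f = P_u f$. You instead establish $V \subset H^\infty$ first by bootstrapping regularity through the $\calL$-invariance of $V$, and only afterwards obtain \eqref{Pu: smoothness} from finite-dimensionality of $V$ together with boundedness of $P_u$ on $H^k$.

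Your approach is more elementary in that it avoids the Fredholm argument on the contour, trading it for the soft fact that all norms on a finite-dimensional space are equivalent. The price is the domain identification you flag at the end: to apply Lemma~\ref{lemclosmores} at level $s-1$ you need $v \in \calD(\calL_0|_{H^{s-1}})$, not merely $\calL_0 v \in H^{s-1}$ distributionally. This is indeed handleable---for instance, one can argue via the explicit semigroup formula \eqref{eqsemicalL0} that the resolvent $(\calL_0 - \lambda)^{-1}$ maps $H^{s-1}$ into $\calD(\calL_0|_{H^{s-1}})$ and agrees with the $L^2_\omega$-resolvent, so $v = (\calL_0 - \lambda)^{-1}(\calL_0 v - \lambda v)$ lies in the domain. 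The paper's route sidesteps this entirely because it never needs to identify the domain of $\calL_0$ on $H^m$ for $m > k$; the contour integral and Fredholm argument work directly at each fixed $m$.
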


\begin{proof}

\noindent \underline{Smoothing of $P_u$ \eqref{Pu: smoothness}.} Recall the  definition of the Riesz projection \eqref{Pu: definition} with contour $\Gamma \subset \{ z \in \CC: \Re z > - \frac \delg 2 \} \cap \rho(-\calL)$. Note that
\[
\calL + \lambda = \calL_0 + \calL' + \lambda = (I + \calL'( \calL_0 +\lambda)^{-1}) (\calL_0 + \lambda).
\]
From Lemma \ref{lemclosmores}, it suffices to show that $I + \calL'( \calL_0 + \lambda)^{-1}: H^m \to H^m$ has an inversion uniformly bounded for $\l \in \Gamma$. 

First of all, we claim that  $\calL'(\calL_0 + \lambda)^{-1}: H^m \to H^m$ is compact. In fact, as shown in Lemma \ref{lemma: compactness of the operator}, we get that $\calL_1', \calL_2': H^{m+1} \to H^m$ is compact, which indicates that $(\calL_1' + \calL_2')( \calL_0 +\lambda)^{-1} : H^m \to H^m$ is compact. Then it suffices to prove that $\calL_3' (\calL_0 + \lambda)^{-1}: H^m \to H^m$ is compact, and it can be verified with the two facts that $(\calL_0+\lambda)^{-1}: H^m \to H^{m+ \frac{3}{2}}$ is bounded and that $\calL_3': H^{m+\frac{3}{2}} \to H^m$ is compact, which has been proved in \eqref{eqresest} and Lemma \ref{lemma: compactness of the operator} respectively. 

Next we show that $I+\calL' (\calL_0+\lambda)^{-1}$ has a bounded inverse on $H^m$ for all $\lambda \in \Gamma \subset \{ z \in \CC: \Re z > - \frac \delg 2 \} \cap \rho(-\calL)$. By contradiction, if we assume that the result is not true, then by Fredholm's alternative, there exists a non-trivial $f \in H^m$ such that
\[
\left( I+\calL' (\calL_0+\lambda)^{-1} \right) f =0, \quad \Rightarrow \quad -\calL f = \lambda f,
\]
a contradiction to $\lambda \in \rho(-\calL)$.

Finally, since the resolvent identity $(\calL_0 + \l)^{-1} - (\calL_0 + \l')^{-1} = (\l' - \l) (\calL_0 + \l)^{-1}(\calL_0 + \l')^{-1}$ implies that $I + \calL' (\calL_0 + \l)^{-1}$ is continuous with respect to $\l$ in the operator norm, the uniformly bounded inversion of $I + \calL' (\calL_0 + \l)^{-1}$ follows the invertibility of each $\l \in \Gamma$ and compactness of $\Gamma$. Hence we have verified \eqref{Pu: smoothness}.

\mbox{}

\noindent \underline{Smoothness of elements in $V$.}
Since $f \in V \subset H^m$, and by the fact that $P_u: H^m \to V$ is a projection, it means that $f = P_u f$. Then by \eqref{Pu: smoothness}, $f \in H^{m+1}$. Iterating this argument again and again, the regularity of $f$ can then be improved to $f \in H^\infty$.
\end{proof}

\mbox{}

\subsection{Modified unstable space}
\label{subsection: modified spectral of L}

In this section, we construct modified unstable space $\tilde H^k_u \subset  C^\infty_0(\RR^3;\RR) \subset L^1(\RR^3;\RR)$ and design the related modified unstable projection $\tilde P_u$, with the linear diagonal dynamics preserved (see Proposition \ref{proposition: spectral property in modified space}).

Besides, from now on, we confine our attention to the real-valued functions and shall represent $H^k$ and $\dot H^{k}$ as $H^k(\RR^3; \RR)$ and $\dot H^{k}(\RR^3; \RR)$ respectively.

\mbox{}

Fix $k \ge 0$, and $\delg>0$ determined in Proposition \ref{proposition: spectral properties of L}. For $\{ \varphi_j, \psi _j\}_{1 \le j \le N}$ given in Proposition \ref{Proposition: unstable mode} (2), we define the matrix by 
\be M_R = \{ (\chi_R \varphi_i, \psi_j)_{H^k} \} _{1 \le i,j \le N},
 \label{matrix}
 \ee
 where $\chi_R$ the cut-off function defined in \eqref{cutoff function: defi}. Noticing that \eqref{eqorthovarphipsi} implies
\bee
M_R = \{  (\varphi_i, \psi_j)_{H^k} + \left( (1-\chi_R) \varphi_i, \psi_j \right)_{H^k} \}_{1 \le i, j \le N} = Id + o_R(1),
\eee
we make the following definition. 

\begin{definition}[Modified unstable space]
\label{def: modified unstable space}
For any $k \ge 0$, and any $\delg>0$ determined in Proposition \ref{proposition: spectral properties of L}, there exists $R = R_{k, \delta_g} \gg 1$ such that the matrix $M_R$ from \eqref{matrix} is invertible, and we define the modified unstable (generalized) eigenmodes $\{\tilde \varphi_i \}_{i=1}^N$ by
\be
(\tilde \varphi_1, ..., \tilde \varphi_N)^T =  M_R^{-1} (\chi_R \varphi_1, \chi_R \varphi_2, ..., \chi_R \varphi_N)^T, 
\label{modified eigenvalues}
\ee
In addition, on the real-valued Sobolev space $H^k$, we define the modified spectral projections as 
\be
\tpu = \sum_{j=1}^N (\cdot, \psi_j)_{H^k} \phirj, \text{ and } \tps= I-\tpu,
\label{modified projection: definition}
\ee
and accordingly, we define the subspaces of $H^k$ by
\be
 \tilde H_u^k = {\rm Ran} (\tpu) = {\rm span}(\tilde \varphi_i)_{i=1}^N,\,\, \text{ and } \tilde H_s^k = {\rm Ran} (\tps).
 \label{modified subspace}
\ee
\end{definition}

Immediately, the following essential properties hold.

\begin{lemma}
\label{lemma: properties of modified projection}
	For any $k \ge 0$, and any $\delg>0$ determined in Proposition \ref{proposition: spectral properties of L}, the projection $\tilde P_u$, $\tilde P_s$ and subspaces $\tilde H^k_u$, $\tilde H^k_s$ from Definition \ref{def: modified unstable space} satisfy the following statements.

	\noindent $\bullet$ $\tpu$ is a projection from $H^k$ to $\tilde H_u^k$, namely $\tilde P_u^2 = \tilde P_u$. Moreover, $H^k = \tilde H_u^k \oplus \tilde H_s^k$.
	
	\noindent  {$\bullet$ $\tilde{H}_u^k \subset C_0^\infty(\RR^3)$.} For any $m \ge k$, $\tpu: H^k \to H^m$ and $\tps: H^m \to H^m$ are bounded. 
	
	\noindent $\bullet$ $\tpu$ and $P_u$ satisfy the following identity:
	\be
	\tilde P_u P_u = \tilde P_u, \quad P_u \tilde P_u = P_u.
	\label{modified projection: property 2}
	\ee
	
	\noindent $\bullet$ $\tps$ and $P_s$ satisfy the following identity:
	\be
	P_s \tilde P_s = \tilde P_s, \quad \tilde P_s  P_s= P_s.
	\label{modified projection: property 3}
	\ee
     Hence $\tilde H_s^k = H_s^k$. 
\end{lemma}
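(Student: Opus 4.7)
The entire lemma will follow from a single biorthogonality relation combined with elementary algebra of projections. The plan is to first establish
\[
(\tilde\varphi_i, \psi_j)_{H^k} = \delta_{ij}, \qquad 1 \le i, j \le N,
\]
which is the modified counterpart of \eqref{eqorthovarphipsi}. This follows directly by expanding $\tilde\varphi_i = \sum_\ell (M_R^{-1})_{i\ell}\,\chi_R\varphi_\ell$ from \eqref{modified eigenvalues} and pairing against $\psi_j$: the resulting sum $\sum_\ell (M_R^{-1})_{i\ell}(M_R)_{\ell j}$ equals $\delta_{ij}$ by the very definition \eqref{matrix} of $M_R$. Applying $\tpu$ twice to any $f \in H^k$ then gives $\tpu^2 f = \sum_j(\tpu f,\psi_j)_{H^k}\tilde\varphi_j = \sum_{i,j}(f,\psi_i)_{H^k}\delta_{ij}\tilde\varphi_j = \tpu f$, so $\tpu$ is a projection onto $\tilde H_u^k$ and the decomposition $H^k = \tilde H_u^k \oplus \tilde H_s^k$ is automatic with $\tilde H_s^k = \ker\tpu$.

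For the second bullet, I would use Lemma \ref{Lemma: smoothness of V} to upgrade each $\varphi_j$ (a priori only in $H^k$) to $H^\infty(\RR^3;\RR)$. Multiplying by the compactly supported cutoff $\chi_R$ then places each $\chi_R\varphi_\ell$, and hence each linear combination $\tilde\varphi_i$, inside $C_0^\infty(\RR^3)$; this gives $\tilde H_u^k \subset C_0^\infty(\RR^3)$. The boundedness $\tpu: H^k \to H^m$ for every $m \ge k$ is immediate from the finite-rank representation \eqref{modified projection: definition} together with $\|\tilde\varphi_j\|_{H^m} < \infty$, and then $\tps = I - \tpu: H^m \to H^m$ is bounded using the continuous embedding $H^m \hookrightarrow H^k$ to make sense of the pairings against $\psi_j$.

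For the third bullet, both identities in \eqref{modified projection: property 2} reduce to short computations using the two biorthogonalities: $(\varphi_i,\psi_j)_{H^k} = \delta_{ij}$ from Proposition \ref{Proposition: unstable mode} and $(\tilde\varphi_i,\psi_j)_{H^k} = \delta_{ij}$ just established. Explicitly, $\tpu P_u f = \sum_j(P_u f,\psi_j)_{H^k}\tilde\varphi_j = \sum_{i,j}(f,\psi_i)_{H^k}\delta_{ij}\tilde\varphi_j = \tpu f$, while the relation $P_u\tilde\varphi_i = \sum_j(\tilde\varphi_i,\psi_j)_{H^k}\varphi_j = \varphi_i$ yields $P_u\tpu f = \sum_j(f,\psi_j)_{H^k}\varphi_j = P_u f$.

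Finally, the fourth bullet is a purely formal consequence of the third. Writing $\tps = I - \tpu$ and $P_s = I - P_u$, the expansion $P_s\tps = I - \tpu - P_u + P_u\tpu = I - \tpu = \tps$ uses $P_u\tpu = P_u$, and symmetrically $\tps P_s = P_s$ uses $\tpu P_u = \tpu$. The coincidence $\tilde H_s^k = H_s^k$ then follows by tracing ranges: $\tilde H_s^k = \tps(H^k) = P_s\tps(H^k) \subset H_s^k$ and conversely $H_s^k = P_s(H^k) = \tps P_s(H^k) \subset \tilde H_s^k$. The only substantive step is thus the opening biorthogonality; once it is in place, nothing beyond finite-dimensional linear algebra is required, so I do not anticipate any real obstacle beyond keeping track of indices.
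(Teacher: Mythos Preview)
Your proposal is correct and follows essentially the same approach as the paper: both proofs hinge on the biorthogonality $(\tilde\varphi_i,\psi_j)_{H^k}=\delta_{ij}$ derived from the definition of $M_R$, then use the smoothness of $\varphi_j$ from Lemma~\ref{Lemma: smoothness of V} plus the cutoff for $C_0^\infty$ regularity, and finally reduce everything else to finite-rank algebra with the two biorthogonality relations. Your algebraic expansion $P_s\tps = I - \tpu - P_u + P_u\tpu$ is slightly more explicit than the paper's ``simple result of \eqref{modified projection: property 2}'', but the content is identical.
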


\begin{proof}
    From \eqref{matrix} and \eqref{modified eigenvalues}, we have the orthogonality 
    \be 
(\phir, \psi_j)_{H^k} = \delta_{i,j} \text{ for any } 1 \le i,j \le N.
\label{eqorthoPutilde}
\ee
    which implies the first statement and the decomposition of $H^k$ into direct sum. The choice of $\{\phir\}$ \eqref{modified eigenvalues} and smoothness of $\varphi_i$ from Lemma \ref{Lemma: smoothness of V} also yield $\tilde H^k_u = {\rm span}(\phir)_{i=1}^N = {\rm span}(\chi_R \varphi_i)_{i=1}^N \subset C^\infty_0$. The boundedness of $\tpu$, $\tps$ follows immediately.

    From Proposition \ref{Proposition: unstable mode}, for any $f \in H^k(\RR^3; \RR)$,
	\begin{align*}
		& \qquad \qquad  \tilde P_u P_u f = \sum_{j=1}^N \sum_{i=1}^N (f, \psi_i)_{H^k} (\varphi_i, \psi_j)_{H^k} \phirj = \sum_{j=1}^N (f, \psi_j)_{H^k} \phirj = \tilde P_u f, \\
		& \text{and } \qquad  P_u \tilde P_u f = \sum_{j=1}^N \sum_{i=1}^N (f, \psi_i)_{H^k} (\phir ,\psi_j)_{H^k} \varphi_j = \sum_{j=1}^N (f, \psi_j)_{H^k} \varphi_j=P_u f,
	\end{align*}
	 hence (\ref{modified projection: property 2}) has been verified. And (\ref{modified projection: property 3}) is a simple result of (\ref{modified projection: property 2}). 
  
 Furthermore, for any $f \in \tilde H_s^k$, (\ref{modified projection: property 3}) implies
\[
f = \tps f = P_s \tps f \subset \text{Ran} P_s = H_s^k,
\]
so $\tilde H^k_s \subset H^k_s$. The other side of inclusion follows similarly.
\end{proof}

Next, we discuss the spectral properties of $-\calL$ under this modified spectral decomposition.

\begin{proposition}[\textcolor{black}{Modified spectral properties of $-\calL$}]
\label{proposition: spectral property in modified space}
 For any $k \ge 0$, and $\delg>0$ determined in Proposition \ref{proposition: spectral properties of L}, for $\tpu$, $\tps$ defined in (\ref{modified projection: definition}) and $\tilde H_u^k$, $\tilde H_s^k$ defined in (\ref{modified subspace}), we have the following properties:
 
\noindent $\bullet$ (Modified unstable part) $-\tpu \calL P_u$ maps $\tilde H_u^k$ to $\tilde H_u^k$. Moreover, there is some real basis $\{ \tilde \phi_{i} \}_{1 \le i \le N}$ such that it can be also expressed as $J= \text{diag}\{J_1, J_2, ...,J_l\}$ with $J_i$ given in (\ref{Jordan normal form}). In addition, $J$ satisfies (\ref{increase rate: unstable part}).

\noindent $\bullet$ (Modified stable part) There exists a universal constant $C=C(k)>0$ such that
% {  $\tilde H_s^k = H_s^k$}. Moreover,
\be
\| e^{-\tau \calL} f\|_{H^k} \le C e^{-\frac{1}{2} \delg \tau} \| f \|_{H^k}, \quad \forall f \in \tilde H_s^k.
\label{decay rate: stable part modified}
\ee

\end{proposition}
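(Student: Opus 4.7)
The plan rests on two structural observations. First, by (\ref{modified projection: property 2}) the restrictions $P_u|_{\tilde H^k_u}: \tilde H^k_u \to H^k_u$ and $\tilde P_u|_{H^k_u}: H^k_u \to \tilde H^k_u$ are mutually inverse isomorphisms; for instance, $\tilde P_u P_u \tilde f = \tilde P_u \tilde f = \tilde f$ for $\tilde f \in \tilde H^k_u$, and symmetrically $P_u \tilde P_u g = P_u g = g$ for $g \in H^k_u$. Second, by the last bullet of Lemma \ref{lemma: properties of modified projection}, the modified and original stable spaces coincide, $\tilde H^k_s = H^k_s$. Together, these reduce the proposition to the Jordan-form information already supplied by Proposition \ref{Proposition: unstable mode}(4) and the previously established semigroup bound (\ref{decay rate: stable part}).

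For the unstable part, I would exploit the invariance $\calL(H^k_u) \subset H^k_u$ from Proposition \ref{Proposition: unstable mode}(3), which gives $P_u \calL P_u = \calL P_u$, and hence on $\tilde H^k_u$
\[
 -\tilde P_u \calL P_u \big|_{\tilde H^k_u} = \bigl(\tilde P_u|_{H^k_u}\bigr)\,\bigl(-\calL|_{H^k_u}\bigr)\,\bigl(P_u|_{\tilde H^k_u}\bigr).
\]
This simultaneously shows that $-\tilde P_u \calL P_u$ maps $\tilde H^k_u$ into itself and that, as an operator on $\tilde H^k_u$, it is similar to $-\calL|_{H^k_u}$. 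Taking the real basis $\{e_j\}_{j=1}^N$ of $H^k_u$ furnished by Proposition \ref{Proposition: unstable mode}(4) in which $-\calL|_{H^k_u}$ has matrix $J$ of the block form (\ref{Jordan normal form}), I set $\tilde\phi_j := \tilde P_u e_j \in \tilde H^k_u$. Since $\tilde P_u|_{H^k_u}$ is an isomorphism, $\{\tilde\phi_j\}_{j=1}^N$ is a basis of $\tilde H^k_u$, and in this basis $-\tilde P_u \calL P_u$ is represented by exactly the same matrix $J$; the coercivity (\ref{increase rate: unstable part}) is then inherited verbatim.

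For the modified stable part, once $\tilde H^k_s = H^k_s$ has been recorded, it suffices to note that $H^k_s \subset V^{*\perp}$ and that the semigroup $e^{-\tau \calL}$ on $H^k(\RR^3;\CC)$ preserves real-valued functions (a consequence of the symmetry $\overline{\calL f} = \calL \bar f$ already noted in (\ref{eqsymcalL})), so the estimate (\ref{decay rate: stable part modified}) is obtained by restricting (\ref{decay rate: stable part}) to real initial data in $H^k_s$. The only genuinely delicate point in the whole argument is the bookkeeping of the interplay between $P_u$, $\tilde P_u$, and the invariant subspace $H^k_u$; once the two projection identities of Lemma \ref{lemma: properties of modified projection} are applied in the right order, no further analysis beyond the structural results already established in Proposition \ref{proposition: spectral properties of L} and Proposition \ref{Proposition: unstable mode} is required.
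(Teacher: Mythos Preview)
Your proposal is correct and follows essentially the same approach as the paper. The paper computes the matrix of $-\tilde P_u \calL P_u$ directly in the basis $\{\tilde\varphi_i\}$ by using $P_u \tilde\varphi_i = \varphi_i$ and $\tilde P_u \varphi_i = \tilde\varphi_i$, while you phrase the same computation as a similarity via the mutually inverse isomorphisms $P_u|_{\tilde H^k_u}$ and $\tilde P_u|_{H^k_u}$; the stable part is handled identically in both via $\tilde H^k_s = H^k_s$ and the known decay (\ref{decay rate: stable part}).
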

\begin{proof} The second conclusion (\ref{decay rate: stable part modified}) directly follows from (\ref{decay rate: stable part}) and $\tilde H^k_s = H^k_s$ from Lemma \ref{lemma: properties of modified projection}. Now we focus on the modified unstable part. In fact, for any $1 \le i \le N$, we see that 
\[P_u \phir = \sum_{j=1}^N (\phir, \psi_j)_{H^k} \varphi_j = \sum_{j=1}^N \delta_{ij} \varphi_j = \varphi_i,
\]
and 
\[
\tilde P_u \varphi_i = \sum_{j=1}^N  (\varphi_i, \psi_j)_{H^k} \phirj = \sum_{j=1}^N \delta_{ij} \phirj = \phir.
\]
If we denote $A$ as the representation matrix of $-\calL$ under the basis $\{ \varphi_1, \varphi_2, ..., \varphi_N \}$ of $H_u^k$, then according to Proposition \ref{Proposition: unstable mode} and Lemma \ref{lemma: properties of modified projection}, for any $f \in \tilde H_u^k$, expressed as $f=\sum_{i=1}^N f^i \phir$ for some $\bbf = (f^1, f^2, ..., f^N)^T \in \RR^N$,
\begin{align*}
  -\tpu \calL P_u f = -\tpu \calL \sum_{j=1}^N f^j \varphi_j
  = -\tpu 
  \sum_{j=1}^N( A \bbf )^j \varphi_j
  = \sum_{j=1}^N (A \bbf)^j \phirj,
\end{align*}
which implies that the representation matrix of $-\tpu \calL P_u$ under the basis $\{ \phir \}_{1 \le i \le N} \subset \tilde H_u^k$
should also be $A$. According to Proposition \ref{Proposition: unstable mode}, there exists a real-valued transition matrix $Q$ such that $Q^{-1} A Q = J$. Consequently, there exists a real basis $\{ \tilde{\phi}_{i} \}_{1 \le i \le N} \subset \tilde H_u^k$ such that the related representative matrix of $-\tilde P_u \calL P_u$ is $J$.

\end{proof}

\begin{corollary}
\label{corollary: introduce of B norm}
   For any $k \ge 0$, and $\delg>0$ determined in Proposition \ref{proposition: spectral properties of L}, for $\tpu$ defined in (\ref{modified projection: definition}) and $\tilde H_u^k$ defined in (\ref{modified subspace}), there exists an inner product on $\tilde H_u^k$, which we denote by $(\cdot, \cdot)_{\tilde B}$, such that
	\be
(-\tilde P_u \calL P_u f, f)_{\tilde B} \ge - \frac{6 \delta_g}{10} \|f\|_{\tilde B}^2, \quad \forall \; f \in \tilde H_u^k.
\label{growth rate of unstable part: modified part version 2}
\ee
Moreover, $(\cdot, \cdot)_{\tilde B}$ induces a norm $\| \cdot \|_{\tilde B}$ on $\tilde H_u^k$ and $\| \cdot \|_{\tilde B} \simeq \| \cdot \|_{H^k}$.
\end{corollary}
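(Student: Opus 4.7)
The plan is to reduce the problem to a coercivity statement for the real Jordan-form matrix $J$ via the basis provided by Proposition \ref{proposition: spectral property in modified space}, and then declare the inner product to be the pullback of the Euclidean one through this basis. In more detail, since $-\tilde P_u \calL P_u$ sends $\tilde H_u^k$ into itself and, in the real basis $\{\tilde\phi_i\}_{1\le i\le N}\subset \tilde H_u^k$, is represented by the matrix $J = \mathrm{diag}(J_1,\dots,J_l)$ satisfying $w^T J w \ge -\tfrac{6\delta_g}{10}|w|^2$ for all $w\in\RR^N$ (see \eqref{increase rate: unstable part}), any $f\in\tilde H_u^k$ can be uniquely written $f = \sum_{i=1}^N f^i\tilde\phi_i$ with coordinate vector $\bbf=(f^1,\dots,f^N)^T\in\RR^N$.

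I would then define
\begin{equation*}
(f,g)_{\tilde B} \;:=\; \bbf^T\bbg \;=\; \sum_{i=1}^N f^i g^i,\qquad f = \sum_i f^i \tilde\phi_i,\; g=\sum_i g^i\tilde\phi_i,
\end{equation*}
which is obviously an inner product on $\tilde H_u^k$ with induced norm $\|f\|_{\tilde B} = |\bbf|$. Since $-\tilde P_u\calL P_u f$ has coordinate vector $J\bbf$ in the basis $\{\tilde\phi_i\}$, we obtain
\begin{equation*}
(-\tilde P_u \calL P_u f, f)_{\tilde B} \;=\; (J\bbf)^T\bbf \;=\; \bbf^T J\bbf \;\ge\; -\frac{6\delta_g}{10}|\bbf|^2 \;=\; -\frac{6\delta_g}{10}\|f\|_{\tilde B}^2,
\end{equation*}
using that $\bbf^T J\bbf$ is a scalar so equals $\tfrac12 \bbf^T(J+J^T)\bbf$, to which \eqref{increase rate: unstable part} applies.

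For the norm equivalence, the map $\RR^N\ni\bbf\mapsto \sum_i f^i\tilde\phi_i\in\tilde H_u^k$ is a linear isomorphism between two finite-dimensional normed spaces, and since $\tilde H_u^k\subset H^k$ is finite-dimensional (see Lemma \ref{lemma: properties of modified projection}), all norms on it are equivalent. In particular $\|\cdot\|_{\tilde B}\simeq \|\cdot\|_{H^k}$, with the equivalence constants depending only on $\{\tilde\phi_i\}_{1\le i\le N}$, which are fixed once $k$ and $\delta_g$ are fixed. There is no real obstacle here beyond bookkeeping; the only step that required substance was the construction of the basis $\{\tilde\phi_i\}$ already furnished by Proposition \ref{proposition: spectral property in modified space}, and the present corollary is merely the linear-algebra packaging of that statement into a coercivity estimate in a Hilbert-space formulation suitable for later energy arguments.
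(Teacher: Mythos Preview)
Your proof is correct and follows essentially the same approach as the paper: define $(\cdot,\cdot)_{\tilde B}$ as the pullback of the Euclidean inner product through the basis $\{\tilde\phi_i\}$ from Proposition \ref{proposition: spectral property in modified space}, reduce \eqref{growth rate of unstable part: modified part version 2} to the matrix inequality \eqref{increase rate: unstable part}, and invoke finite-dimensionality for the norm equivalence. The paper's argument is the same, only more terse.
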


\begin{proof}
	For any $f,g \in \tilde H_u^k$, from Proposition \ref{proposition: spectral property in modified space}, they can be uniquely decomposed into $f= \sum_{i=1}^N \tilde f^i \tilde \phi_{i}$ and $g= \sum_{i=1}^N \tilde g^i \tilde \phi_{i}$ for some $\tilde \bbf= (\tilde f^1, ...\tilde f^N)^T \in \RR^N$ and $\tilde{ \textbf{g}}=(\tilde g^1, ...\tilde g^N)^T \in \RR^N$, then we define
\be
(f,g)_{\tilde B} = \tilde \bbf^\perp \tilde {\textbf{g}}.
\label{inner product on modified unstable space}
\ee
It can be easily verified that \eqref{inner product on modified unstable space} defines an inner product $(\cdot, \cdot)_{\tilde B}$ on $\tilde H_u^k$, inducing a norm $\| \cdot \|_{\tilde B}$ on $\tilde H_u^k$, then \eqref{growth rate of unstable part: modified part version 2} follows from Proposition \ref{proposition: spectral property in modified space} and Proposition \ref{Proposition: unstable mode} (4). Finally, the equivalence $\| \cdot |_{\tilde B} \simeq \| \cdot \|_{H^k}$ is established based on the well-known fact that any two norms on a finite-dimensional space are equivalent.
\end{proof}

\section{Existence of Stable Trajectory}
\label{section: bootstrap}

In this section, we come back to the nonlinear system \eqref{linearized equation} and discuss its dynamics to prove Theorem \ref{main thm}. We fix $k \ge 2$ and $\delta_g < \frac 1{16}$ chosen from Proposition \ref{proposition: spectral properties of L}. Applying the modified spectral decomposition $I = \tps + \tpu$ (see Definition \ref{def: modified unstable space}) to \eqref{linearized equation}, the real-valued variables $(\teps, \tepu,U, \Pi) := (\tilde P_s \ep, \tpu \ep, U, \Pi)$ solve the system
\begin{equation*}
\begin{cases}
	\partial_\tau \tepu = - \tpu\calL \ep + \tpu \left( -U \cdot \nabla \Psi + N(\ep) \right), \\
	\partial_\tau \teps = -\tps \calL \ep + \tps \left( -U \cdot \nabla \Psi + N(\ep) \right), \\
	\partial_\tau U + \frac{1}{2} \left( U + y \cdot \nabla U \right) + U \cdot \na U = \Delta U - \na \Pi - \mu \Psi e_3, \\
	\nabla \cdot U=0,
\end{cases}
\end{equation*}
where 
$\mu(\tau)= \mu_0 e^{-\frac{1}{2} \tau}$
for some $\mu_0 >0$ and $N(\ep)$ is the nonlinear term 
$$ N(\ep) = \na \cdot (\ep \na \Dein \ep). $$ 

In addition, by (\ref{modified projection: property 2}) and the commutability between $\calL$ and $P_u$, we obtain that
\[
-\tilde P_u \calL \ep = -\tilde P_u P_u \calL  \ep = -\tilde P_u \calL P_u \ep = -\tilde P_u \calL  P_u \tilde P_u \ep = -\tilde P_u \calL  P_u \tepu,
\]
and similarly, from \eqref{modified projection: property 3} and the fact that $\tilde H_s^k = H_s^k$, one gets that
\[
-\tps \calL \ep = - \tps \calL \tps \ep - \tps \calL \tpu \ep = -\calL \teps  - \tps \calL \tepu,
\]
hence the system can be rewritten as 
\begin{equation}
\begin{cases}
	\partial_\tau \tepu = - \tpu \calL P_u\tepu + \tpu \left( -U \cdot \nabla \Psi + N(\ep) \right), \\
	\partial_\tau \teps = -\calL \teps - \tps \calL \tepu + \tps \left( -U \cdot \nabla \Psi + N(\ep) \right), \\
	\partial_\tau U + \frac{1}{2} \left( U + y \cdot \nabla U \right) + U \cdot \na U = \Delta U - \na \Pi - \mu \Psi e_3, \\
	\nabla \cdot U=0.
\end{cases}
\label{linearized equation: coupled system of stable, unstable, U term}
\end{equation}

\begin{remark}
    The local wellposedness of \eqref{equation: NS-KS velocity form} given in Theorem \ref{theorem: LWP} implies the local wellposedness of renormalized system \eqref{linearized equation} with $(\e, U) \in H^m \times H^m_\sigma$ for any $m \ge 2$, and thereafter of this spectrally decoupled system \eqref{linearized equation: coupled system of stable, unstable, U term} with $(\tilde \e_s, \tilde \e_u, U) \in \left(\tilde H^k_s \cap H^m\right) \times \tilde H^k_u \times H^m_\sigma$ for $m \ge k \ge 2$. 
\end{remark}

\begin{proposition}[Bootstrap]
\label{proposition: bootstrap}
For any fixed $k \ge 2$, we take $\delta_g < \frac 1{16}$ from Proposition \ref{proposition: spectral properties of L} and let $\tps, \tpu$, $\tilde H_s^k, \tilde H_{u}^k$ and $R = R_{k, \delta_g}$ be given by Definition \ref{def: modified unstable space}. Then there exist $0< \delta_i \ll 1 (0 \le i \le 4)$ with 
\be
\delta_0 \ll \delta_4 \ll \delta_3 \ll \delta_1 \ll \delta_2 \ll \delta_3^{\frac{1}{2}} \ll \min\{\delg, Q(2R)\},
\label{constants: relationship}
\ee
such that for any  {initial datum $\mu(0)=\mu_0$, $\tilde \ep_s(0)=\tilde \ep_{s0} \in \tilde H^k_s \cap H^{k+3}$ and $U(0)=U_0 \in H_{\sigma}^{k+3}$} satisfying
		\be
		|\mu_0| + \| \tilde \ep_{s0} \|_{H^{k+1}} + \textcolor{black}{\| U_0 \|_{\dot H^1 \cap \dot H^{k+1}}}  \le \delta_0,
		\label{bootstrap assum NS-KS: initial data}
		\ee
		there exists $\tilde \ep_{u0} \in \tilde H_u^k$ such that the solution to the (\ref{renormalized equation: NS-KS}) starting from $(\Psi_0, U_0) = (Q+ \tilde \ep_{s0} + \tilde \ep_{u0},U_0)$ globally exists and satisfies the followings for all $\tau \ge 0$:

	\noindent $\bullet$ (Control of the modified stable part $\teps$) 
	\be
	\| \teps (\tau) \|_{H^k} < \delta_1 e^{-\frac{1}{2} \delta_g \tau},
	\label{bootstrap assum: stable part}
	\ee
	
	\noindent $\bullet$ (Control of the higher regularity of the $\teps$)
	\be
	\| \teps(\tau) \|_{\dot H^{k+1}} < \delta_2 e^{-\frac{1}{2} \delg \tau},
	\label{bootstrap assum: stable part higher regularity}
	\ee
	
	\noindent $\bullet$ (Control of the modified unstable part $\tepu$)
	\be
	\| \tepu(\tau) \|_{\tilde B}  \le \delta_3 e^{-\frac{7}{10}\delg \tau},
	 \label{bootstrap assum: unstable part}
	\ee

	\noindent $\bullet$ (Control of the flow)
	\be
	\| U(\tau) \|_{\dot H^1 \cap \dot H^{k+1}} < \delta_4 e^{-\frac{1}{8} \tau}.
	 \label{bootstrap assum: flow}
	\ee
\end{proposition}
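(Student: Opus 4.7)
I would follow the standard bootstrap-plus-topological-shooting scheme analogous to \cite{MR3986939, MR4359478}. For each $\tilde \ep_{u0}$ in the closed $\|\cdot\|_{\tilde B}$-ball of radius $\delta_3$ in $\tilde H^k_u$, the local well-posedness (Theorem \ref{theorem: LWP}) gives a solution of \eqref{linearized equation: coupled system of stable, unstable, U term} with initial data $\tilde \ep_{s0} + \tilde \ep_{u0}$ and $U_0$. Define the exit time $\tau^*(\tilde \ep_{u0}) \in (0,\infty]$ as the first time any of \eqref{bootstrap assum: stable part}--\eqref{bootstrap assum: flow} saturates; the aim is to locate some $\tilde \ep_{u0}$ with $\tau^* = \infty$. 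The proof splits into (i) closing strictly the stable, higher-regularity and fluid bounds on $[0,\tau^*]$ so that only \eqref{bootstrap assum: unstable part} can possibly saturate at $\tau^*$; (ii) deriving outgoing transversality at the unstable sphere; (iii) running a Brouwer no-retraction argument on the finite-dimensional $\tilde H^k_u$ to produce the required $\tilde \ep_{u0}$.

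\textbf{Closing the stable and fluid bounds.} For \eqref{bootstrap assum: stable part}, I would use the Duhamel representation
\[
 \tilde \ep_s(\tau) = e^{-\tau \calL}\tilde \ep_{s0} + \int_0^\tau e^{-(\tau-s)\calL}\tilde P_s\bigl(-\calL \tilde \ep_u - U\cdot \nabla \Psi + N(\ep)\bigr)(s)\,ds,
\]
together with the modified stable semigroup bound \eqref{decay rate: stable part modified} on $\tilde H^k_s$. Under the bootstrap, the three forcing contributions decay at rates $e^{-7\delg \tau/10}$ (from $\tilde \ep_u$ with size $\delta_3$), $e^{-\tau/8}$ (from $U\cdot \nabla \Psi$ with size $\delta_4 \|\nabla Q\|_{L^\infty}$) and $e^{-\delg \tau}$ (from $N(\ep)\sim \|\ep\|_{H^k}\|\ep\|_{H^{k+1}}$ of size $\delta_1 \delta_2$); since $\delg<1/16<1/4$ and $\delta_1 \gg \delta_3, \delta_4, \delta_1\delta_2$ by \eqref{constants: relationship}, the $H^k$-bound closes strictly with the target rate $e^{-\delg \tau/2}$. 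For \eqref{bootstrap assum: stable part higher regularity} I would run a direct $\dot H^{k+1}$ energy estimate on $\tilde \ep_s$ (equivalently on $\ep$ followed by projection), using the coercivity \eqref{coercivity of L0} of $\calL_0$ to extract a $\|\cdot\|_{H^{k+2}}^2$ sink, treating $\calL'$ as a compact lower-order perturbation whose low-frequency portion is controlled by the already-closed \eqref{bootstrap assum: stable part}, and absorbing the derivative-losing piece $\nabla \ep \cdot \nabla \Dein \ep$ of $N(\ep)$ into the sink via Sobolev product estimates. For \eqref{bootstrap assum: flow} a standard Navier--Stokes $\dot H^1 \cap \dot H^{k+1}$ energy estimate uses the coercivity of $\Delta U - \tfrac12(U + y\cdot \nabla U)$, absorbs $U\cdot \nabla U$ by smallness of $\delta_4$, and bounds the forcing $-\mu \Psi e_3$ by $\mu_0 e^{-\tau/2}(\|\nabla Q\|_{L^2} + \|\ep\|_{\dot H^{k+1}})$, which decays much faster than $e^{-\tau/8}$ and closes strictly in $\delta_4$.

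\textbf{Outgoing transversality and Brouwer shooting.} Testing the $\tilde \ep_u$-equation against $\tilde \ep_u$ in $(\cdot,\cdot)_{\tilde B}$ and invoking Corollary \ref{corollary: introduce of B norm} yields
\[
 \tfrac{d}{d\tau}\|\tilde \ep_u\|_{\tilde B}^2 \;\ge\; -\tfrac{6\delg}{5}\|\tilde \ep_u\|_{\tilde B}^2 - C\bigl(\delta_1^2 + \delta_4^2 + \delta_1^2\delta_2^2\bigr) e^{-\delg \tau},
\]
and the hierarchy \eqref{constants: relationship} ensures the second term is $\ll \delta_3^2 e^{-7\delg\tau/5}$. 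At any exit time $\tau^*$ saturating \eqref{bootstrap assum: unstable part}, this gives
\[
 \tfrac{d}{d\tau}\bigl[\|\tilde \ep_u\|_{\tilde B}^2 - \delta_3^2 e^{-7\delg\tau/5}\bigr]_{\tau=\tau^*} \;\ge\; \bigl(\tfrac{7\delg}{5}-\tfrac{6\delg}{5}\bigr)\delta_3^2 e^{-7\delg\tau^*/5} - (\text{small}) \;>\; 0,
\]
i.e.\ strict outgoing transversality on the $\tilde B$-sphere of radius $\delta_3 e^{-7\delg\tau/10}$. If no $\tilde \ep_{u0}$ in the closed ball gave a global solution, outgoing transversality together with continuous dependence on initial data would make $\tau^*$ continuous on $\overline{B}_{\tilde B}(0,\delta_3) \subset \tilde H^k_u$, with $\tau^* \equiv 0$ on the boundary. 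Then the map $\tilde \ep_{u0} \mapsto \delta_3 e^{7\delg \tau^*/10}\tilde \ep_u(\tau^*)$ would be a continuous retraction of this closed ball onto its boundary sphere, contradicting Brouwer's no-retraction theorem; the resulting $\tilde \ep_{u0}$ yields the global trajectory of the proposition.

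\textbf{Main obstacle.} I expect the most delicate point to be the higher-regularity bound \eqref{bootstrap assum: stable part higher regularity}. Because $N(\ep) = \ep^2 + \nabla \ep\cdot \nabla \Dein \ep$ loses a derivative in its second piece, the $\dot H^{k+1}$ estimate cannot close via the semigroup on $\tilde H^k_s$ and must instead be closed purely through the dissipation of $\calL_0$; the non-selfadjoint compact part $\calL'$ and the coupling $\tilde P_s\calL \tilde \ep_u$ then need careful low-order treatment via \eqref{bootstrap assum: stable part}, and the precise hierarchy $\delta_2 \ll \delta_3^{1/2}$ in \eqref{constants: relationship} (equivalently $\delta_2^2 \ll \delta_3$) is exactly what is needed to balance the quadratic nonlinear growth against the finite-dimensional unstable feedback.
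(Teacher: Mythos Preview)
Your overall scheme is exactly that of the paper: improve \eqref{bootstrap assum: stable part} by Duhamel with the stable semigroup decay \eqref{decay rate: stable part modified}, improve \eqref{bootstrap assum: stable part higher regularity} by a direct $\dot H^{k+1}$ energy estimate using the dissipation of $\calL_0$, improve \eqref{bootstrap assum: flow} by a Navier--Stokes energy estimate, then establish outgoing transversality via Corollary~\ref{corollary: introduce of B norm} and conclude by Brouwer.

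There is, however, a bookkeeping error in your transversality inequality that makes it fail to close as stated. Your displayed lower bound on $\tfrac{d}{d\tau}\|\tilde\ep_u\|_{\tilde B}^2$ carries an error term $C(\delta_1^2+\delta_4^2+\delta_1^2\delta_2^2)e^{-\delg\tau}$; since $\delta_1\gg\delta_3$ in \eqref{constants: relationship}, the piece $\delta_1^2$ cannot be dominated by $\tfrac{\delg}{5}\delta_3^2 e^{-7\delg\tau/5}$ (already at $\tau^*=0$). The remedy is not to apply Young to the cross terms but to keep the factor $\|\tilde\ep_u\|_{\tilde B}$: one has $\|\tilde P_u N(\ep)\|_{\tilde B}\lesssim\|\ep\|_{H^{k+1}}^2\lesssim\delta_2^2 e^{-\delg\tau}$ and $\|\tilde P_u(U\cdot\nabla\Psi)\|_{\tilde B}\lesssim\delta_4 e^{-\tau/8}$, so at $\tau=\tau^*$ the error is $\lesssim\delta_3(\delta_2^2+\delta_4)e^{-7\delg\tau^*/5}$, which is $\ll\delg\delta_3^2 e^{-7\delg\tau^*/5}$ precisely because $\delta_2\ll\delta_3^{1/2}$ and $\delta_4\ll\delta_3$. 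In particular, the constraint $\delta_2\ll\delta_3^{1/2}$ you single out is used \emph{here}, not in the $\dot H^{k+1}$ estimate; the latter closes simply because $\delta_2\gg\delta_0,\delta_1,\delta_3,\delta_4$.
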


Proposition \ref{proposition: bootstrap} is the center of the paper, and it implies Theorem \ref{main thm}. As in \cite{MR3986939,MR4359478}, the Proposition \ref{proposition: bootstrap} will be proven via contradiction using a topological argument as follows: given $(\mu_0, \tilde \ep_{s0}, U_0)$ satisfying \eqref{bootstrap assum NS-KS: initial data} with $\na \cdot U_0 =0$, we assume that for any $\tepu \in \tilde H_u^k$  satisfying \eqref{bootstrap assum: unstable part} when $\tau=0$, the exit time 	
\be
\tau^* = \sup \{ \tau \ge 0:  (\mu,\teps, \tepu, U) \text{  {satisfy (\ref{bootstrap assum: stable part})-(\ref{bootstrap assum: flow}) simultaneously} on } [0,\tau] \},
\label{exiting time}
\ee
\noindent is finite and then we look for a contradiction under the assumptions of the coefficients given in \eqref{constants: relationship}. Subsequently, we therefore study the flow on $[0,\tau^*]$ where \eqref{bootstrap assum: stable part}-\eqref{bootstrap assum: flow} holds. Precisely, we will verify that the bounds in (\ref{bootstrap assum: stable part}), (\ref{bootstrap assum: stable part higher regularity}), and (\ref{bootstrap assum: flow}) can be further improved within the bootstrap regime, ensuring that $(\teps, U)$ will not exit the bootstrap regime at time $\tau=\tau^*$ and the only potential scenario for the solution to exit the bootstrap regime is if \eqref{bootstrap assum: unstable part} fails as $\tau > \tau^*$, thus together with the outgoing flux property of $\tepu$ at the exit time, we can conclude a contradiction via Brouwer's fixed point theorem.

\subsection{A priori estimates}
In this subsection, we consider the a priori estimates. It is worth noting that throughout our discussion, the constant $C>0$ of the estimates may vary from line to line as required.

\subsubsection{Energy estimate of the flow $U$}

\mbox{}

\begin{lemma}[$L^\infty$ smallness of $U$] Under the bootstrap assumptions in Proposition \ref{proposition: bootstrap}, there exists a universal constant $C>0$ such that
\be
 {
\| U(\tau) \|_{\infty} \le C \| U(\tau) \|_{\dot H^1 \cap \dot H^{k+1}} \le C \delta_4 e^{-\frac{1}{8} \tau}, \quad \forall \tau \in [0,\tau^*].
\label{U: L infty}
}
\ee
\end{lemma}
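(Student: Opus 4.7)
The second inequality in the statement is an immediate consequence of the bootstrap assumption \eqref{bootstrap assum: flow}, so the entire task reduces to proving the Sobolev-type embedding
\[
  \|U\|_{L^\infty(\RR^3)} \le C \|U\|_{\dot H^1 \cap \dot H^{k+1}}
\]
for $k \ge 2$. The point is that the homogeneous scale $\dot H^{3/2}$ is the borderline for embedding into $L^\infty$ in $\RR^3$; here we have better regularity both above (since $k+1\ge 3>3/2$) and below ($\dot H^1$) the critical scale, so both ends of the frequency spectrum can be handled.

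The plan is to pass to the Fourier side via the standard bound $\|U\|_{L^\infty} \le (2\pi)^{-3} \|\widehat{U}\|_{L^1}$, then split $\widehat U$ into low and high frequencies against the unit ball. For the low-frequency part $\int_{|\xi|\le 1}|\widehat U(\xi)|\,d\xi$, I would insert a factor $|\xi|/|\xi|$ and apply Cauchy--Schwarz, writing
\[
  \int_{|\xi|\le 1}|\widehat U(\xi)|\,d\xi \le \left(\int_{|\xi|\le 1}|\xi|^{-2}\,d\xi\right)^{\!1/2} \left(\int_{|\xi|\le 1}|\xi|^{2}|\widehat U(\xi)|^2 d\xi\right)^{\!1/2} \le C\|U\|_{\dot H^1}.
\]
The weight integral is finite in three dimensions because $\int_0^1 r^{-2} r^2\,dr<\infty$. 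For the high-frequency piece $\int_{|\xi|>1}|\widehat U(\xi)|\,d\xi$, I apply Cauchy--Schwarz against the weight $|\xi|^{-(k+1)}\cdot|\xi|^{k+1}$, which yields a bound by $\|U\|_{\dot H^{k+1}}$ provided $\int_{|\xi|>1}|\xi|^{-2(k+1)}\,d\xi<\infty$; this requires $2(k+1)>3$, which is guaranteed by $k\ge 2$.

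There is no serious obstacle here: the only place one must be careful is the low-frequency end, where one really does need $\dot H^1$ (rather than $L^2$) to absorb the singular weight $|\xi|^{-2}$ near the origin, reflecting the fact that the Biot--Savart/divergence-free structure only provides homogeneous control on $U$. Combining the two frequency regimes yields the first inequality, and chaining with \eqref{bootstrap assum: flow} completes the proof.
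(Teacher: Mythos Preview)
Your argument is correct. The Fourier-side splitting with Cauchy--Schwarz against the weights $|\xi|^{\pm 1}$ and $|\xi|^{\pm(k+1)}$ is a clean and self-contained way to obtain the embedding, and the integrability checks you give are accurate in $\RR^3$.

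The paper takes a slightly different but equally short route: it invokes the Gagliardo--Nirenberg interpolation inequality
\[
  \|U\|_{L^\infty} \le C \|U\|_{L^6}^{1/2} \|U\|_{\dot H^2}^{1/2},
\]
then uses the Sobolev embedding $\dot H^1(\RR^3) \hookrightarrow L^6(\RR^3)$ for the first factor and the interpolation $\|U\|_{\dot H^2} \le C\|U\|_{\dot H^1 \cap \dot H^{k+1}}$ (valid since $1 \le 2 \le k+1$) for the second. Your approach has the advantage of being entirely elementary and not appealing to named inequalities; the paper's version, on the other hand, makes explicit that only $\dot H^1 \cap \dot H^2$ control is actually needed, which is a marginally sharper observation. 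Either way the content is the same: one must sit strictly on both sides of the critical scale $\dot H^{3/2}$, and the bootstrap hypothesis supplies exactly that.
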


\begin{proof}
With the initial data $\rho_0 \in H^{k+2}$ and $ U_0 \in H_{\sigma}^{k+2}$, the local well-posedness of system \eqref{equation: NS-KS velocity form} implies that $U(\tau) \in H_{\sigma}^{k+2}$ for any $\tau \in [0,\tau^*]$. Consequently, by Sobolev embedding and  {Gagliardo-Nirenberg inequality}, there exists a universal constant $C>0$ such that
\[
\| U \|_{L^\infty} \le C \| U \|_{L^6}^\frac{1}{2} \| U \|_{\dot H^{2}}^\frac{1}{2} \le C \| U \|_{\dot H^1}^\frac{1}{2} \| U \|_{\dot H^{2}}^\frac{1}{2} 
\le C \| U \|_{\dot H^1 \cap \dot H^{k+1}},
\]
then (\ref{U: L infty}) follows (\ref{bootstrap assum: flow}).
\end{proof}

\begin{lemma}[Energy estimates of $U$] 
\label{lemma: improve bootstrap flow}
Under the assumptions of Proposition \ref{proposition: bootstrap}, there exists a universal constant $C>0$ such that
\be
\frac{d}{d\tau} \| U \|_{\dot H^1}^2 \le -\frac{1}{4} \| U \|_{\dot H^1}^2 + C \delta_4^3 e^{-\frac{3}{8} \tau} + C \mu_0^2 e^{-\tau}, \quad \forall \tau \in [0,\tau^*],
\label{energy estimate of U H1 norm}
\ee
and
\be
 \frac{d}{d\tau}  \| U \|_{\dot H^{k+1}}^2
	\le -\frac{1}{4} \| U \|_{\dot H^{k+1}}^2 
	+ C \delta_4^3 e^{-\frac{3}{8} \tau}
	+ C\mu_0^2 e^{- \tau}, \quad \forall \tau \in [0,\tau^*].
	\label{energy estimate of U Hm norm}
\ee
Furthermore, the bootstrap assumption (\ref{bootstrap assum: flow}) can be improved to
\be
\| U (\tau) \|_{\dot H^1 \cap \dot H^{k+1}} \le \frac{1}{2} \delta_4 e^{-\frac{1}{8}  \tau}, \quad \forall \tau \in [0,\tau^*].
\label{improve bootstrap: flow}
\ee
\end{lemma}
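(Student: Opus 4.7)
The plan is to derive \eqref{energy estimate of U H1 norm} and \eqref{energy estimate of U Hm norm} by standard $L^2$ energy estimates on derivatives of the momentum equation in \eqref{linearized equation: coupled system of stable, unstable, U term}, and then close the bootstrap via Gronwall. For the $\dot H^1$ bound, I would apply $\pa_i$ to the momentum equation and test against $\pa_i U$, summing over $i$. The Laplacian contributes a favorable $-\|U\|_{\dot H^2}^2 \le 0$; the pressure gradient vanishes against $\pa_i U$ because $\nabla\cdot U = 0$; the transport splits into $(U\cdot\na)\pa_i U\cdot \pa_i U$, which vanishes by incompressibility, and $(\pa_i U\cdot\na)U\cdot\pa_i U$, which is absorbed by $\|\na U\|_{L^\infty}\|U\|_{\dot H^1}^2 \le C\|U\|_{H^{k+1}}\|U\|_{\dot H^1}^2 \le C\delta_4^3 e^{-3\tau/8}$ after invoking the Sobolev embedding $\dot H^{k+1}\cap\dot H^1 \hookrightarrow W^{1,\infty}$ (valid for $k\ge 2$) and \eqref{bootstrap assum: flow}. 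The two scaling pieces $\tfrac12 U$ and $\tfrac12 y\cdot\na U$, after applying $\pa_i$ and integrating $y\cdot\na$ by parts, combine into an effective $+\tfrac14\|U\|_{\dot H^1}^2$ on the left-hand side (using $\int y\cdot \na|\na U|^2\,dy = -3\|\na U\|_{L^2}^2$). Finally, the buoyancy source $-\mu\Psi e_3$ is handled by Young's inequality as $|\mu(\na\Psi,\na U)_{L^2}| \le \tfrac18\|U\|_{\dot H^1}^2 + C\mu_0^2 e^{-\tau}\|\na\Psi\|_{L^2}^2$, and the weight $\|\na\Psi\|_{L^2} \le \|\na Q\|_{L^2} + \|\na\e\|_{L^2}$ is uniformly bounded throughout the bootstrap regime since $Q \in \dot H^1$ and $\e = \teps+\tepu$ is controlled by \eqref{bootstrap assum: stable part}--\eqref{bootstrap assum: unstable part} together with Lemma \ref{lemma: properties of modified projection}.

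The $\dot H^{k+1}$ estimate is identical in structure after applying $D^{k+1}$ and testing against $D^{k+1}U$. The only new ingredient is the commutator $[D^{k+1},U\cdot\na]U$ produced by the transport term, which I would treat by the Kato--Ponce inequality to obtain $\|[D^{k+1},U\cdot\na]U\|_{L^2} \le C\|\na U\|_{L^\infty}\|U\|_{\dot H^{k+1}}$, then close as above using Sobolev at regularity $k+1\ge 3$. The scaling and pressure terms are unchanged, and the source contribution becomes $C\mu_0^2 e^{-\tau}\|\na\Psi\|_{\dot H^k}^2$, once again bounded uniformly via the smoothness of $Q$ and the bootstrap.

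To upgrade \eqref{bootstrap assum: flow}, I would sum \eqref{energy estimate of U H1 norm} and \eqref{energy estimate of U Hm norm} and set $y(\tau):=\|U(\tau)\|_{\dot H^1}^2 + \|U(\tau)\|_{\dot H^{k+1}}^2$. Gronwall on the resulting differential inequality yields
\begin{equation*}
    y(\tau) \le e^{-\tau/4}y(0) + C\int_0^\tau e^{-(\tau-s)/4}\left(\delta_4^3 e^{-3s/8} + \mu_0^2 e^{-s}\right)ds \le \left(\delta_0^2 + C\delta_4^3 + C\mu_0^2\right)e^{-\tau/4},
\end{equation*}
since both time integrals produce a factor bounded by a constant times $e^{-\tau/4}$. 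The hierarchy \eqref{constants: relationship}, namely $\delta_0\ll\delta_4\ll 1$ together with $|\mu_0| \le \delta_0$ from \eqref{bootstrap assum NS-KS: initial data}, then forces the right-hand side below $\tfrac14\delta_4^2 e^{-\tau/4}$, which is precisely the squared version of \eqref{improve bootstrap: flow}.

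I expect the main bookkeeping challenge to lie in isolating the sharp damping constant $-\tfrac14$ in the differential inequalities --- the coercivity of the self-similar scaling on $U$ differs from the one for $\Psi$ in \eqref{energy estimate: scaling part KS} because the coefficient in front of the scaling is $\tfrac12$ rather than $\tfrac12\Lambda$, so one must carefully combine the $+\tfrac12$ from the $\tfrac12 U$ contribution with the $-\tfrac34$ from integrating by parts on $\tfrac12 y\cdot\na$, leaving $+\tfrac12\|U\|_{\dot H^1}^2$ on the left before absorbing $\tfrac14$ into the source Young term. The nonlinear and commutator estimates are entirely routine once $k\ge 2$ is fixed.
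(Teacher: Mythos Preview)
Your proposal is correct and follows essentially the same approach as the paper: energy estimates on the momentum equation at $\dot H^1$ and $\dot H^{k+1}$ levels, then Gronwall. The only notable difference is your treatment of the nonlinear term---you bound it directly via $\|\nabla U\|_{L^\infty}\le C\|U\|_{\dot H^1\cap\dot H^{k+1}}$ and the bootstrap, whereas the paper instead interpolates (using $\|DU\|_{L^3}^3$ at the $\dot H^1$ level and $L^4$--Gagliardo--Nirenberg at the $\dot H^{k+1}$ level) and absorbs part into the dissipation $-\|U\|_{\dot H^{k+2}}^2$; your route is simpler given that $k\ge 2$ already provides the needed $W^{1,\infty}$ control. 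One minor caveat: the arithmetic in your final paragraph is muddled---the commutator $[\pa_i,\tfrac12 y\cdot\nabla]=\tfrac12\pa_i$ contributes an additional $+\tfrac12$, so the correct tally is $\tfrac12+\tfrac12-\tfrac34=\tfrac14$, matching what you stated earlier in the main body.
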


\begin{proof} 
\underline{Estimate of \eqref{energy estimate of U H1 norm}.}
By (\ref{linearized equation: coupled system of stable, unstable, U term}), we have 
\begin{align*}
	&\quad \frac{d}{d\tau} \frac{1}{2} \| U \|_{\dot H^1}^2
	= \sum_{|\alpha|=1}\int \partial^\alpha U \cdot \partial^\alpha(\partial_\tau U) dy \\
	& =  \sum_{|\alpha|=1} \int \partial^\alpha U \cdot  \left(  \partial^\alpha \left( - \frac{1}{2} \left( U + y \cdot \nabla U \right) -U \cdot \nabla U + \Delta U - \nabla \Pi - \mu \Psi e_3 \right) \right) dy \\
	& = -\frac{1}{2} \sum_{|\alpha| =1}\int \partial^\alpha U \cdot  \partial^\alpha (U + y \cdot \na U) dy -\sum_{|\alpha| =1} \int \partial^\alpha U \cdot \left( \partial^\alpha (U \cdot \na U) \right) dy  \\ 
 &  \quad -\| U \|_{\dot H^2}^2  
  -\sum_{|\alpha| =1} \mu \int \partial^\alpha U \cdot  \partial^\alpha \Psi e_3 dy,
\end{align*}
where the last equality uses the incompressible condition $\nabla \cdot U=0$. Similar to the argument for proving \eqref{energy estimate: scaling part KS}, we can get that
 {
\be
\frac{1}{2}\sum_{|\alpha| =1}\int \partial^\alpha U \cdot  \partial^\alpha (U + y \cdot \na U) dy  = \frac{1}{4}\| U \|_{\dot H^1}^2.
\label{NS: energy estimate for scaling part}
\ee
}

When it comes to the nonlinear term, by  {Gagliardo-Nirenberg inequality} and Young's inequality, one obtains that
\[
\| f \|_{L^3}^3 \le C\| f \|_{L^2}^\frac{3}{2} \| \nabla f \|_{L^2}^\frac{3}{2} \le \frac{1}{4} \| \nabla f \|_{L^2}^2 + C \| f \|_{L^2}^6, \quad \text{ for any } f \in H^1,
\]
thus it implies that the nonlinear term can be estimated by
\begin{align*}
    &  \quad \Big| \sum_{|\alpha| =1} \int \partial^\alpha U \cdot \left( \partial^\alpha (U \cdot \na U) \right) dy \Big| \\
    &=  \Big| \sum_{|\alpha|=1} \int \partial^\alpha U \cdot \left( ( U \cdot \na  \partial^\alpha U) \right) dy +  \sum_{|\alpha|=1} \int \partial^\alpha U \cdot \left( ( \partial^\alpha U \cdot \na U) \right) dy \Big| \\
    & = \Big|  \sum_{|\alpha|=1} \int \partial^\alpha U \cdot ( \partial^\alpha U \cdot \na U)  dy \Big| \le C\| U \|_{\dot W^{1,3}}^3 \le  \frac{1}{4} \| U \|_{\dot H^2}^2 + C \| U \|_{\dot H^1}^6.
\end{align*}

Additionally, for the buoyancy term, from bootstrap assumption (\ref{bootstrap assum: stable part}) and \eqref{bootstrap assum: unstable part},
\[
\| \mu D \Psi \|_{L^2} = \mu \| DQ + D\teps + D \tepu \|_{L^2} \le C \mu. 
\]

In summary, combining all the estimates above, for any $\tau \in [0,\tau^*]$, by the bootstrap assumptions given in Proposition \ref{proposition: bootstrap}, one obtains that
\begin{align*}
	\frac{d}{d\tau} \frac{1}{2} \| U \|_{\dot H^1}^2
	%& = - \frac{1}{4} \| U \|_{\dot H^1}^2  -\| U \|_{\dot H^2}^2  - \int DU \left( D(U \cdot \na U) \right) dy - \mu \int DU D\Psi e_3 dy \\
	& \le - \frac{1}{4} \| U \|_{\dot H^1}^2 + C \| U \|_{\dot H^1}^6+ C\mu \| U \|_{\dot H^1}  \\
	& \le  - \frac{1}{8} \| U \|_{\dot H^1}^2+ C \| U \|_{\dot H^1}^6 +  C \mu^2  \le -\frac{1}{8} \| U \|_{\dot H^1}^2 + C \delta_4^6 e^{-\frac{3}{4} \tau} + C \mu_0^2 e^{-\tau},
\end{align*}
which yields (\ref{energy estimate of U H1 norm}).

\noindent \underline{Estimate of \eqref{energy estimate of U Hm norm}.}
Similarly, from \eqref{linearized equation: coupled system of stable, unstable, U term}, one obtains that
\begin{align*}
	& \quad \frac{d}{d\tau} \frac{1}{2} \| U \|_{\dot H^{k+1}}^2 
	= \sum_{|\alpha| =k+1} \int  \partial^\alpha U \cdot \partial^\alpha(\partial_\tau U) dy \\
	& = \sum_{|\alpha| =k+1} \int (\partial^\alpha U) \cdot \left( \partial^\alpha \left( - \frac{1}{2} \left( U + y \cdot \nabla U \right) -U \cdot \nabla U + \Delta U - \nabla \Pi- \mu \Psi e_3 \right) \right) dy \\
	& = - \frac{2k+1}{4} \| U \|_{\dot H^{k+1}}^2 - \| U \|_{\dot H^{k+2}}^2  - \sum_{|\alpha|=k+1}\int \partial^\alpha U \cdot \left( \partial^\alpha (U \cdot \na U) \right) dy 
 - \sum_{|\alpha|=k+1} \mu \int \partial^\alpha U \cdot \partial^\alpha \Psi e_3 dy, 
\end{align*}
here we used $U(\tau) \in H^{k+2}_\sigma$ under the bootstrap assumption by the local wellposedness Theorem \ref{theorem: LWP}.

For the nonlinear term, from  {Gagliardo-Nirenberg inequality}, there exists $C=C(k)>0$ such that
\[
 \| D^j f \|_{L^4} \le C \| Df \|_{L^2}^\frac{k+\frac{5}{4}-j}{k+1}  \| D^{k+2} f \|_{L^2}^{1-\frac{k+\frac{5}{4}-j}{k+1}}, \quad \forall 1 \le j \le k+1,
\]
we have
\begin{align*}
	& \quad \sum_{0 \le i \le k} \Big\| |D^{{k+1}-i} U| \cdot |D^{i+1} U| \Big\|_{L^2}  
 \le \sum_{0 \le i \le {k}}\| D^{{k+1}-i} U \|_{L^4} \| D^{i+1} U \|_{L^4}\\
	&\le C \sum_{0 \le i \le k} \left( \| DU \|_{L^2}^\frac{k+\frac{5}{4}-(k+1-i)}{k+1}  \| D^{k+2} U \|_{L^2}^{1-\frac{k+\frac{5}{4}-(k+1-i)}{k+1}} \right) 
 \left( \| DU \|_{L^2}^\frac{k+\frac{5}{4}-(i+1)}{k+1}  \| D^{k+2} U \|_{L^2}^{1-\frac{k+\frac{5}{4}-(i+1)}{k+1}} \right) \\
 & \le C \| U \|_{\dot H^1}^\frac{k+\frac{1}{2}}{k+1} \| U \|_{\dot H^{k+2}}^{2- \frac{k+\frac{1}{2}}{k+1}},
\end{align*}
then by integration by parts and $\na \cdot U=0$, it implies that the nonlinear term satisfies

\begin{align*}
	& \quad \sum_{|\alpha|=k+1} \int \partial^\alpha U \cdot \left( \partial^\alpha (U \cdot \na U) \right) dy 
       \le C \| U \|_{\dot H^{k+1}} \sum_{0 \le i \le k} \Big\| |D^{{k+1}-i} U| \cdot |D^{i+1} U| \Big\|_{L^2}  \\
       & \le C \|  U \|_{\dot H^{k+1}} \| U \|_{\dot H^{1}}^\frac{k+\frac{1}{2}}{k+1} \|  U \|_{\dot H^{k+2}}^{2- \frac{k+\frac{1}{2}}{k+1}} 
        \le C \| U \|_{\dot H^1 \cap \dot H^{k+1}}^\frac{2k+\frac{3}{2}}{k+1} \|  U \|_{\dot H^{k+2}}^{\frac{k+\frac{3}{2}}{k+1}} 
       \le  \frac{1}{4} \| U \|_{\dot H^{k+2}}^2+  C \| U \|_{\dot H^1 \cap \dot H^{k+1}}^\frac{4k+3}{k+\frac{1}{2}}.
\end{align*}

In addition, for the buoyancy term, by bootstrap assumption (\ref{bootstrap assum: stable part higher regularity}) and \eqref{bootstrap assum: unstable part}, there exists a universal constant $C=C(k)>0$ such that
\bee
\| D^{k+1} \Psi \|_{L^2} = \| D^{k+1} (Q + \teps + \tepu) \|_{L^2} \le C.
\eee

In summary, together with \eqref{U: L infty},
\begin{align}
	\frac{d}{d\tau} \frac{1}{2} \| U \|_{\dot H^{k+1}}^2
	& \le -\frac{2k+1}{4} \| U \|_{\dot H^{k+1}}^2 
	+C \| U \|_{\dot H^1 \cap \dot H^{k+1}}^\frac{4k+3}{k+\frac{1}{2}}  + C \mu \| U \|_{\dot H^{k+1}} 
    \label{U:k+1 regularity}
    \\
	& \le -\frac{1}{8} \| U \|_{\dot H^{k+1}}^2 + C \| U \|_{\dot H^{1} \cap \dot H^{k+1}}^3 + C \mu^2 \notag \\
        & \le -\frac{1}{8} \| U \|_{\dot H^{k+1}}^2 + C \delta_4^3 e^{-\frac{3}{8} \tau} + C \mu_0^2 e^{-\tau}. \notag
	\end{align}
which is \eqref{energy estimate of U Hm norm}.

\mbox{}

\noindent \underline{Estimate of \eqref{improve bootstrap: flow}.}
	Applying Gronwall's inequality onto (\ref{energy estimate of U H1 norm}) and (\ref{energy estimate of U Hm norm}), then together with the conditions given in Proposition \ref{proposition: bootstrap},
	\begin{align*}
		   {\| U(\tau) \|_{\dot H^1 \cap \dot H^{k+1}}^2}
		  &=\| U(\tau) \|_{\dot H^{1}}^2 +\| U(\tau) \|_{\dot H^{k+1}}^2 \\
		 &\le e^{-\frac{1}{4} \tau} \| U(0) \|_{\dot H^1 \cap \dot H^{k+1}}^2 + C\int_0^\tau e^{-\frac{1}{4}(\tau-s)} \left(\delta_4^3 e^{-\frac{3}{8} s} + \mu_0^2 e^{- s} \right) ds  \\
		 &\le C (\delta_0^2 + \delta_4^3 + \mu_0^2) e^{-\frac{1}{4} \tau} \le \frac{1}{4} \delta_4^2 e^{-\frac{1}{4} \tau}.
		\end{align*}
\end{proof}

\subsubsection{Semigroup estimate for $\| \teps \|_{H^k}$}

\begin{lemma}[Semigroup estimate of $\| \teps \|_{H^k}$]
\label{lemma: improve bootstrap stable part}
	Under the assumptions of Proposition \ref{proposition: bootstrap}, the bootstrap assumption (\ref{bootstrap assum: stable part}) can be improved to
	\be
	\| \teps(\tau) \|_{H^k} \le \frac{1}{2} \delta_1 e^{-\frac{\delg}{2} \tau}, \quad \forall \; \tau \in [0,\tau^*].
	\label{improve bootstrap: stable part}
	\ee
\end{lemma}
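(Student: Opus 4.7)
The plan is to treat the $\teps$ equation in \eqref{linearized equation: coupled system of stable, unstable, U term} via Duhamel's principle, using the semigroup decay \eqref{decay rate: stable part modified} from Proposition \ref{proposition: spectral property in modified space} on the stable subspace $\tilde H_s^k$. Writing the source as
\[
F(s) := -\tps\calL\tepu + \tps\bigl(-U\cdot\na\Psi + N(\ep)\bigr),
\]
which lies in $\tilde H_s^k$ because $\tps$ projects onto it, Duhamel's formula yields
\[
\|\teps(\tau)\|_{H^k} \le C e^{-\frac{\delg}{2}\tau}\|\teps(0)\|_{H^k} + C\int_0^\tau e^{-\frac{\delg}{2}(\tau-s)}\|F(s)\|_{H^k}\,ds.
\]
It then suffices to show that each component of $F(s)$ decays strictly faster than $e^{-\delg s/2}$ with a sufficiently small coefficient.

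Next, I would estimate the three contributions separately. \emph{(i) Linear coupling $\tps\calL\tepu$:} since $\tilde H_u^k \subset C_0^\infty$ is finite-dimensional, the map $\tps\calL : \tilde H_u^k \to H^k$ is bounded, and by Corollary \ref{corollary: introduce of B norm} together with \eqref{bootstrap assum: unstable part} we get $\|\tps\calL\tepu\|_{H^k} \le C\|\tepu\|_{\tilde B} \le C\delta_3 e^{-7\delg s/10}$. \emph{(ii) Flow transport $\tps(U\cdot\na\Psi)$:} Moser-type product estimates in $H^k$ (available since $k\ge 2$) combined with $\|U\|_{L^\infty} + \|U\|_{\dot H^k} \le C\|U\|_{\dot H^1 \cap \dot H^{k+1}} \le C\delta_4 e^{-s/8}$ from \eqref{U: L infty} and \eqref{bootstrap assum: flow}, plus uniform bounds on $\na\Psi = \na Q + \na\teps + \na\tepu$ in $L^\infty$ and $\dot H^k$ obtained from \eqref{bootstrap assum: stable part higher regularity}, \eqref{bootstrap assum: unstable part} and smoothness of $Q$, yield $\|\tps(U\cdot\na\Psi)\|_{H^k} \le C\delta_4 e^{-s/8}$. \emph{(iii) Nonlinear term $\tps N(\ep)$:} expanding via $\Delta\Dein = -I$ as $N(\ep) = \na\ep\cdot\na\Dein\ep - \ep^2$, then invoking the algebra property of $H^k$ and the smoothing $\|\na\Dein\ep\|_{\dot H^k} \le C\|\ep\|_{\dot H^{k-1}}$, we obtain $\|N(\ep)\|_{H^k} \le C\|\ep\|_{H^k}\|\ep\|_{H^{k+1}}$; since $\|\ep\|_{H^k} \le C(\delta_1+\delta_3)e^{-\delg s/2}$ and $\|\ep\|_{H^{k+1}} \le C(\delta_2+\delta_3)e^{-\delg s/2}$ under the bootstraps, this gives $\|N(\ep)\|_{H^k} \le C\delta_1\delta_2 e^{-\delg s}$ after absorbing the $\delta_3$ pieces via \eqref{constants: relationship}.

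Finally, inserting these three bounds into the Duhamel integral and using $\delg < 1/16$ (so in particular $\delg/2 < 1/8 < 7\delg/10$), each integrand decays strictly faster than $e^{-\delg s/2}$, and the standard convolution estimate yields
\[
\|\teps(\tau)\|_{H^k} \le C\Bigl(\delta_0 + \delg^{-1}\bigl(\delta_3 + \delta_4 + \delta_1\delta_2\bigr)\Bigr) e^{-\frac{\delg}{2}\tau}.
\]
The hierarchy \eqref{constants: relationship}, in particular $\delta_0,\delta_4,\delta_3 \ll \delta_1$ and $\delta_2 \ll 1$, then forces the right-hand side to be at most $\tfrac{1}{2}\delta_1 e^{-\delg\tau/2}$, giving \eqref{improve bootstrap: stable part}. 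The main subtlety is the nonlinear estimate: placing both factors of $\ep$ in $H^{k+1}$ would produce $\delta_2^2$ in place of $\delta_1\delta_2$, and although the hierarchy $\delta_2^2 \ll \delta_3 \ll \delta_1$ still saves the bound, exploiting the nonlocal smoothing of $\na\Dein$ to share only one derivative between the two factors is what keeps the argument robust and compatible with the derivative loss inherent to $N(\ep)$.
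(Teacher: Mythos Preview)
Your argument is correct and follows essentially the same route as the paper's proof: Duhamel with the semigroup decay \eqref{decay rate: stable part modified} on $\tilde H_s^k$, then separate $H^k$ bounds on the linear coupling, the advection term, and the nonlinearity, closed via the hierarchy \eqref{constants: relationship}. Two harmless slips: the parenthetical $1/8 < 7\delg/10$ is false (since $\delg < 1/16$), but only $\delg/2 < \min\{1/8,\,7\delg/10,\,\delg\}$ is actually needed; and since $\Delta\Dein = I$ the expansion of $N(\ep)$ should read $\na\ep\cdot\na\Dein\ep + \ep^2$, which does not affect the norm estimate.
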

\begin{proof}
By Duhamel's principle, $\teps$ can be solved as
\begin{align*}
	\teps(\tau) = e^{-\tau \calL} \teps(0) + \int_0^\tau e^{-(\tau-s) \calL}  \left( -\tps \calL \tepu + \tps \left( N(\ep) -U \cdot \nabla \Psi  \right) \right)(s) ds,
\end{align*}
then by Proposition \ref{proposition: spectral property in modified space}, this yields that
\begin{align*}
	 \| \teps(\tau) \|_{H^k}  \le C e^{-\frac{\delg \tau}{2}}  \| \teps(0) \big\|_{H^k} + \int_0^\tau  e^{-\frac{\delg}{2} (\tau-s)}\| -\tps  \calL \tepu + \tps \left(  N(\ep) -U \cdot \nabla \Psi \right) \big\|_{H^k} ds.
\end{align*}

For $ -\tps \calL \tepu$, using the fact that $\calL: H^{k+2} \to H^k$ is bounded, the boundedness of $\tps$ in $H^k$ (Lemma \ref{lemma: properties of modified projection}) and the bootstrap assumption (\ref{bootstrap assum: unstable part}),
\begin{align*}
	\| -\tps \calL \tepu \|_{H^k} \le C \| \calL \tepu \|_{H^k} \le C \| \tepu \|_{H^{k+2}} \le C \| \tepu \|_{\tilde B} \le C \delta_3 e^{-\frac{7 \delg}{10}  \tau},
\end{align*}
for some universal constant $C=C(k)>0$.

For the nonlinear term $\tps N(\ep)$, we apply the bilinear estimate \eqref{energy estimate of nonlinear term} and bootstrap assumptions (\ref{bootstrap assum: stable part}), (\ref{bootstrap assum: stable part higher regularity}), (\ref{bootstrap assum: unstable part}) to find
\begin{align}
    \| \tps N(\ep) \|_{H^k} &\le C \| N(\ep) \|_{H^k} \le C  \| \teps \|_{H^{k+1}}^2 + C\| \tepu \|_{H^{k+1}}^2 \le C \| \teps \|_{H^{k+1}}^2 + C\| \tepu \|_{\tilde B}^2 \notag \\
	& \le C\left( \delta_1 + \delta_2 \right)^2  e^{-\delg \tau} + C \delta_3^2 e^{-\frac{7}{5} \delg  \tau}.
 \label{nonliear estimate: Hk}
\end{align}

When it comes to the advection term,  by  {Gagliardo-Nirenberg inequality}, bootstrap assumptions (\ref{bootstrap assum: stable part}), (\ref{bootstrap assum: stable part higher regularity}) and (\ref{bootstrap assum: unstable part}) together with \eqref{U: L infty}, the $\dot H^{k}$ norm can be controlled as follows:
\begin{align*}
        & \quad \| D^k \left( U \cdot \na \Psi \right) \|_{L^2} \le C \sum_{i=0}^k \Big\| |D^i U| \cdot |D^{k+1-i} \Psi| \Big\|_{L^2} \\
	%& \Big \| \sum_{i=0}^k  D^i U \cdot \nabla D^{k-i} \Psi \Big\|_{L^2}
	& \le C \|U \|_{L^\infty} \| D^{k+1} \Psi \|_{L^2} + C \sum_{i=1}^k \| D^i U \|_{L^4} \| D^{k+1-i} \Psi \|_{L^4}  \\
	& \le C \| U \|_{\dot H^1 \cap \dot H^{k+1}} \| D^{k+1} \Psi \|_{L^2} + C \sum_{i=1}^k \| U \|_{\dot H^1 \cap \dot H^{k+1}}  \| \Psi \|_{L^2 \cap \dot W^{k,4}} \\
	& \le C \| U \|_{\dot H^1 \cap \dot H^{k+1}} \| \Psi \|_{H^{k+1}} \le C \| U \|_{\dot H^1 \cap \dot H^{k+1}},
\end{align*}
for some $C=C(k)>0$. A similar argument also implies that 
\[
\| U \cdot \na \Psi \|_{L^2} \le \| U \|_{L^\infty} \| \na \Psi \|_{L^2} \le C \| U \|_{\dot H^1 \cap \dot H^{k+1}} \| \Psi \|_{H^{k+1}} \le C \| U \|_{\dot H^1 \cap \dot H^{k+1}},
\]
so together with Lemma \ref{lemma: properties of modified projection}, we have obtained the $H^k$ estimate of the advection term by
\begin{align}
  \| \tps \left(- U \cdot \nabla \Psi \right) \|_{H^k}
  \le \| U \cdot \na \Psi \|_{H^k} \le C \| U \|_{\dot H^1 \cap \dot H^{k+1}} \le C \delta_4 e^{-\frac{1}{8} \tau}, \quad \forall \tau \in [0,\tau^*].
  \label{advection term: energy estimate}
\end{align}

In summary, with $\delta_g < \frac 1{16}$, we conclude that
\begin{align*}
\| \teps(\tau) \|_{H^k} 
& \le  C e^{-\frac{\delg \tau}{2}}  \| \teps(0) \big\|_{H^k} +  C \int_0^\tau e^{-\frac{\delg}{2}(\tau -s)} \left(  \delta_3 e^{-\frac{7 \delg}{10} s} + \left( \delta_1 + \delta_2 \right)^2  e^{-\delg s} + \delta_3^2 e^{-\frac{7}{5} \delg  s} + \delta_4 e^{-\frac{1}{8} s} \right) ds
\\
& \le C e^{-\frac{\delg \tau}{2}} \left( \delta_0  + (\delta_1 + \delta_2 + \delta_3)^2 + \delta_3 + \delta_4 \right) \le \frac{1}{2} \delta_1 e^{-\frac{\delg \tau}{2}}, \quad \forall \; \tau \in [0,\tau^*].
\end{align*}
\end{proof}

\subsubsection{Energy estimate $ \|\teps \|_{\dot H^{k+1}}$}
\label{subsection: energy estimate}
\begin{lemma}[Energy estimate $ \|\teps \|_{\dot H^{k+1}}$]
\label{lemma: improve bootstrap higher regularity of stable part}
	Under the assumptions of Proposition \ref{proposition: bootstrap}, there exists a constant $C>0$ such that
	\be
	 \frac{d}{d\tau} \| \teps \|_{\dot H^{k+1}}^2
	\le -\| \teps \|_{\dot H^{k+1}}^2 + C\left( \delta_1+ \delta_3 +\delta_4 \right)^2e^{-\delg \tau}, \quad \forall \tau \in [0,\tau^*].
	\label{energy estimate of k+1 derivative}
	\ee
	Furthermore, the constant in (\ref{bootstrap assum: stable part higher regularity}) can be improved to
	\be
	\| \teps(\tau) \|_{\dot H^{k+1}} \le \frac{1}{2} \delta_2 e^{-\frac{1}{2} \delg \tau}, \quad \forall \tau \in [0,\tau^*].
	\label{improve bootstrap: higher regularity of stable part}
	\ee
\end{lemma}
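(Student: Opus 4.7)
\medskip

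\noindent\emph{Proof proposal.} The plan is to run a direct $\dot H^{k+1}$ energy estimate on the evolution equation for $\teps$ in \eqref{linearized equation: coupled system of stable, unstable, U term}, extract dissipation from $-\calL$, bound the three forcing terms, and then apply Gronwall. Computing
\[
\tfrac 12 \tfrac{d}{d\tau}\|\teps\|_{\dot H^{k+1}}^2 = -(\calL \teps, \teps)_{\dot H^{k+1}} - (\tps\calL\tepu, \teps)_{\dot H^{k+1}} - (\tps(U\cdot\nabla\Psi), \teps)_{\dot H^{k+1}} + (\tps N(\ep), \teps)_{\dot H^{k+1}},
\]
I treat the linear term first. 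Splitting $\calL=\calL_0+\calL'$, commuting $D^{k+1}$ past $-\Delta$ (which commute) and past $\tfrac12\Lambda$ (with commutator equal to $\tfrac{k+1}{2}D^{k+1}$ by the Leibniz rule, analogously to \eqref{energy estimate: scaling part KS}) gives $(\calL_0\teps,\teps)_{\dot H^{k+1}}=\|\teps\|_{\dot H^{k+2}}^2+\tfrac{2k+3}{4}\|\teps\|_{\dot H^{k+1}}^2$. For the perturbation $\calL'$, I use the commutator identity $D^{k+1}\calL_3'=\calL_3'D^{k+1}+[D^{k+1},\calL_3']$ together with the pointwise coercivity $(v,\calL_3'v)_{L^2}=-\tfrac12\int Q|v|^2\le 0$ (so its contribution to the lower bound costs only $\tfrac12\|Q\|_{L^\infty}\|\teps\|_{\dot H^{k+1}}^2$), and the boundedness $\calL_1',\calL_2':H^{k+1}\to H^{k+1}$ coming from the smooth rapidly-decaying coefficients, to obtain $(\calL'\teps,\teps)_{\dot H^{k+1}}\ge -C\|\teps\|_{H^{k+1}}^2$. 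Interpolating $\|\teps\|_{\dot H^{k+1}}^2\le \eta\|\teps\|_{\dot H^{k+2}}^2+C_\eta\|\teps\|_{H^k}^2$ absorbs the bad $\dot H^{k+1}$ term into the hyper-dissipation; since $k\ge 2$ gives $\tfrac{2k+3}{4}\ge \tfrac74$, I can keep a net coefficient $1$ in front of $\|\teps\|_{\dot H^{k+1}}^2$ and still have half the hyper-dissipation left.

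For the three forcing terms the main issue is that none of $U,\Psi$ is controlled at $\dot H^{k+2}$, so I avoid that norm by using a one-step integration by parts wherever a divergence is already present, and then absorbing the resulting $\|\teps\|_{\dot H^{k+2}}$ factor into the hyper-dissipation via Young. The spectral transfer term is the easiest: since $\tilde H^k_u\subset C_0^\infty$ is finite-dimensional, $\|\tps\calL\tepu\|_{\dot H^{k+1}}\le C\|\tepu\|_{\tilde B}\le C\delta_3 e^{-7\delg\tau/10}$. For the transport term, I use $\nabla\cdot U=0$ to write $U\cdot\nabla\Psi=\nabla\cdot(U\Psi)$, and then
\[
(\teps,\nabla\cdot(U\Psi))_{\dot H^{k+1}}=-(D^{k+1}\nabla\teps,D^{k+1}(U\Psi))_{L^2},
\]
together with the Moser-type product estimate $\|U\Psi\|_{\dot H^{k+1}}\le C(\|U\|_{L^\infty}\|\Psi\|_{H^{k+1}}+\|\Psi\|_{L^\infty}\|U\|_{\dot H^{k+1}})\le C\|U\|_{\dot H^1\cap\dot H^{k+1}}$ by the bootstrap bounds on $\Psi=Q+\teps+\tepu$ and \eqref{U: L infty}; the error from $\tpu(U\cdot\nabla\Psi)$ is absorbed via the smoothing Lemma \ref{lemma: properties of modified projection}. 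For the nonlinearity $N(\ep)=\nabla\cdot(\ep\nabla\Dein\ep)$, I apply the same trick, yielding
\[
|(\tps N(\ep),\teps)_{\dot H^{k+1}}|\le C\|\teps\|_{\dot H^{k+2}}\|\ep\nabla\Dein\ep\|_{\dot H^{k+1}}+(\tpu\text{-error}),
\]
where the product estimate $\|\ep\nabla\Dein\ep\|_{\dot H^{k+1}}\le C(\|\ep\|_{L^\infty}\|\nabla\Dein\ep\|_{\dot H^{k+1}}+\|\ep\|_{\dot H^{k+1}}\|\nabla\Dein\ep\|_{L^\infty})\le C\|\ep\|_{H^{k+1}}^2$ is valid because $\nabla\Dein$ is order $-1$ (so $\|\nabla\Dein\ep\|_{\dot H^{k+1}}\le C\|\ep\|_{\dot H^k}$) and $H^2\hookrightarrow L^\infty$ on $\RR^3$.

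Collecting all estimates, Young's inequality absorbs every $\|\teps\|_{\dot H^{k+2}}$ factor into the remaining $\tfrac12\|\teps\|_{\dot H^{k+2}}^2$, leaving a forcing bounded by $C\|\teps\|_{H^k}^2+C\|U\|_{\dot H^1\cap\dot H^{k+1}}^2+C\|\tepu\|_{\tilde B}^2+C\|\ep\|_{H^{k+1}}^4$. Invoking \eqref{bootstrap assum: stable part}, \eqref{bootstrap assum: stable part higher regularity}, \eqref{bootstrap assum: unstable part}, \eqref{bootstrap assum: flow}, and using $\delg<\tfrac{1}{16}\ll \tfrac14$ to dominate $e^{-\tau/4}$ by $e^{-\delg\tau}$, all forcing is of the form $C(\text{const})^2 e^{-\delg\tau}$; the quartic $\|\ep\|_{H^{k+1}}^4$ term contributes $(\delta_1+\delta_2+\delta_3)^4 e^{-2\delg\tau}$, which is dominated by $\delta_1^2 e^{-\delg\tau}$ using the ordering \eqref{constants: relationship} ($\delta_2^2\ll \delta_3\ll \delta_1$ implies $\delta_2^4\ll \delta_1^2$), thereby eliminating $\delta_2$ from the forcing and yielding \eqref{energy estimate of k+1 derivative}. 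For the bootstrap improvement, Gronwall applied to $f'\le -f+g$ with $g\le C(\delta_1+\delta_3+\delta_4)^2 e^{-\delg\tau}$ and $\delg<1$ gives $\|\teps(\tau)\|_{\dot H^{k+1}}^2\le \bigl(\delta_0^2+C(\delta_1+\delta_3+\delta_4)^2/(1-\delg)\bigr)e^{-\delg\tau}$, and the ordering $\delta_1\ll\delta_2$ implies this is $\le \tfrac14\delta_2^2 e^{-\delg\tau}$, which is \eqref{improve bootstrap: higher regularity of stable part}. The principal obstacle throughout is the apparent derivative loss in the nonlinear and transport terms at $\dot H^{k+1}$; it is resolved uniformly by exploiting the divergence structure of both $U\cdot\nabla\Psi=\nabla\cdot(U\Psi)$ and $N(\ep)=\nabla\cdot(\ep\nabla\Dein\ep)$ to perform one integration by parts and then redeem the borrowed derivative from the heat hyper-dissipation $\|\teps\|_{\dot H^{k+2}}^2$ produced by $-\Delta$ in $\calL_0$.
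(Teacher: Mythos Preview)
Your proposal is correct and follows essentially the same strategy as the paper: a direct $\dot H^{k+1}$ energy estimate, extraction of the dissipation $-\|\teps\|_{\dot H^{k+2}}^2-\tfrac{2k+3}{4}\|\teps\|_{\dot H^{k+1}}^2$ from $\calL_0$, treatment of $\calL'$ as lower order, absorption of top-order forcing into the hyper-dissipation via Young, and Gronwall to close. The only differences are cosmetic: for the transport and nonlinear forcing you exploit the divergence structure ($U\cdot\nabla\Psi=\nabla\cdot(U\Psi)$, $N(\ep)=\nabla\cdot(\ep\nabla\Dein\ep)$) to integrate by parts once and then apply a Kato--Ponce product estimate on the lower-order factor, whereas the paper estimates $\|\tps N(\ep)\|_{\dot H^{k+1}}$ and $\|\tps(U\cdot\nabla\Psi)\|_{\dot H^{k+1}}$ directly (producing the $\|\teps\|_{\dot H^{k+2}}$ factor through $\Psi=Q+\teps+\tepu$) and then interpolates $\|\teps\|_{\dot H^{k+1}}\le \|\teps\|_{H^k}^{1/2}\|\teps\|_{\dot H^{k+2}}^{1/2}$ before Young. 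Similarly, for $\calL'$ the paper computes each of $\calL_1',\calL_2',\calL_3'$ explicitly to land on $C\|\teps\|_{H^k}\|\teps\|_{\dot H^{k+2}}$, while you bound $|(\calL'\teps,\teps)_{\dot H^{k+1}}|\le C\|\teps\|_{H^{k+1}}^2$ and then interpolate. Both routes use the same ingredients and arrive at the same differential inequality.
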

\begin{proof}
\underline{\textit{Step 1. Linear estimates.}}
By integration by parts,  {Gagliardo-Nirenberg inequality} together with (\ref{energy estimate: scaling part KS}), 
\begin{align*}
& (-\calL_0 \teps, \teps)_{\dot H^{k+1}} = (\Delta \teps, \teps)_{\dot H^{k+1}} - \left( \frac{1}{2} \Lambda \teps, \teps\right)_{\dot H^{k+1}}= -\| \teps \|_{\dot H^{k+2}}^2- \frac{2k+3}{4} \| \teps \|_{\dot H^{k+1}}^2, \\
& (\nabla \cdot (\teps \nabla \Dein Q), \teps)_{\dot H^{k+1}}
	= (D^{k+1} \nabla \cdot (\teps \nabla \Dein Q), D^{k+1}\teps)\Ltwo
	= -(D^{k+1} (\teps \na \Dein Q), \na D^{k+1} \teps)_\Ltwo \\
	& \qquad \qquad \qquad \qquad \qquad \quad = -(D^{k+1} \teps \na \Dein  Q, \na D^{k+1} \teps )\Ltwo + O(\| \teps \|_{H^k} \| \teps \|_{\dot H^{k+2}}) \\
	& \qquad \qquad \qquad \qquad \qquad \quad = \frac{1}{2} \int Q |D^{k+1} \teps|^2 dx +  O(\| \teps \|_{H^k} \| \teps \|_{\dot H^{k+2}}) \le C \| \teps \|_{H^k} \| \teps\|_{\dot H^{k+2}}, \\
	&(\nabla \cdot (Q \nabla \Dein \teps), \teps)_{\dot H^{k+1}} = - (D^{k+1}(Q\na \Dein \teps), \na D^{k+1}\teps)_{L^2} \le C\| \teps \|_{H^k} \| \teps \|_{\dot H^{k+2}},
\end{align*}
which, by Young's inequality, implies that
\begin{align*}
	 -(\calL \teps, \teps)_{\dot H^{k+1}} 
	& \le - \| \teps \|_{\dot H^{k+2}}^2 - \frac{2k+3}{4} \| \teps \|_{\dot H^{k+1}}^2 + C \| \teps \|_{H^k} \| \teps \|_{\dot H^{k+2}} \\
	& \le - \frac{7}{8}  \| \teps \|_{\dot H^{k+2}}^2 - \frac{2k+3}{4} \| \teps \|_{\dot H^{k+1}}^2 + C  \| \teps \|_{H^k}^2. 
	\end{align*}
Here we used $\tilde \e_s(\tau) \in H^{k+2}$ under the bootstrap assumption again by the local wellposedness Theorem \ref{theorem: LWP}.

\mbox{}

\noindent \underline{\textit{Step 2. Nonlinear estimates.}}
For $-\tps \calL \tepu$, by Cauchy-Schwarz inequality, the definition of $\tilde B$ norm,
\begin{align*}
	\left( -\tps \calL \tepu, \teps \right)_{\dot H^{k+1}} \le C\| \calL \tepu \|_{H^{k+1}} \| \teps \|_{\dot H^{k+1}} \le C \| \tepu \|_{\tilde B} \|\teps \|_{\dot H^{k+1}} \le \frac{1}{8} \| \teps \|_{\dot H^{k+1}}^2 + C \| \tepu \|_{\tilde B}^2.
\end{align*}

When it comes to the nonlinear term $\tps N(\ep)$, by Sobolev embedding inequality and bilinear estimate \eqref{energy estimate of nonlinear term}, one gets

\begin{align*}
	 \|\tps N(\ep)\|_{\dot H^{k+1}}
	 \le C \| \ep \|_{H^{k+1}}^2 + C\| \ep \|_{H^k} \| \ep \|_{\dot H^{k+2}} 
	 \le C \| \tepu \|_{\tilde B}^2  + C \| \teps \|_{H^k}^2 + C\| \ep \|_{H^k} \| \teps \|_{\dot H^{k+2}},
 %C \| \teps \|_{H^k}\left( \| \teps \|_{H^k}  + \| \teps \|_{\dot H^{k+2}} \right),
	\end{align*}
which, by bootstrap assumptions (\ref{bootstrap assum: stable part}), (\ref{bootstrap assum: stable part higher regularity}), (\ref{bootstrap assum: unstable part}), Cauchy-Schwarz inequality and Young's inequality, indicates that
\begin{align*}
	\left( \tps N(\ep), \teps \right)_{\dot H^{k+1}}
	& \le C\left( \| \tepu \|_{\tilde B}^2  + \| \teps \|_{H^k}^2 + \| \ep \|_{H^k} \| \teps \|_{\dot H^{k+2}} \right) \| \teps \|_{\dot H^{k+1}} \\
	& \le  C\left( \| \tepu \|_{\tilde B}^2 + \| \teps \|_{H^k}^2 \right)\| \teps \|_{\dot H^{k+1}} + C \left( \| \teps \|_{H^k} +  \| \tepu \|_{H^k} \right)^{\frac{3}{2}} \| \teps \|_{\dot H^{k+2}}^\frac{3}{2}  \\
	& \le \frac{1}{8} \| \teps \|_{\dot H^{k+1}}^2 + \frac{1}{8} \| \teps \|_{\dot H^{k+2}}^2 + C \left( \| \tepu \|_{\tilde B}^4 + \| \teps \|_{H^k}^4 \right).
\end{align*}

Moreover, as for the advection term, similar to the argument of \eqref{advection term: energy estimate}, under the bootstrap assumptions in Proposition \ref{proposition: bootstrap}, we conclude that 
\[
\| \tps (-U \cdot \na \Psi) \|_{\dot H^{k+1}} \le C 
\| U \cdot \na \Psi \|_{H^{k+1}} \le C  \| U \|_{\dot H^1 \cap \dot H^{k+1}}\left( 1+ \| \teps \|_{\dot H^{k+2}} \right),
\]
which yields that
\begin{align*}
      & \quad \big| (\tps (U \cdot \nabla \Psi), \teps)_{\dot H^{k+1}} \big|
	 \le C \| U \|_{\dot H^1 \cap \dot H^{k+1}} \| \teps\|_{\dot H^{k+1}} \left( 1+ \| \teps \|_{\dot H^{k+2}} \right)  \\
	& \le C\| U \|_{\dot H^1 \cap \dot H^{k+1}} \| \teps\|_{\dot H^{k+1}} + C\| U \|_{\dot H^1 \cap \dot H^{k+1}} \| \teps\|_{H^{k}}^{\frac{1}{2}} \| \teps \|_{\dot H^{k+2}}^\frac{3}{2} \\
	& \le \frac{1}{8} \| \teps \|_{\dot H^{k+1}}^2 +  \frac{1}{8} \| \teps \|_{\dot H^{k+2}}^2 
	+ C  \| U \|_{\dot H^1 \cap \dot H^{k+1}}^2  + C \| \teps \|_{H^k}^2.
	\end{align*}

 \mbox{}

\noindent\underline{\textit{Step 3. Conclusion.}} In summary, from (\ref{linearized equation: coupled system of stable, unstable, U term}), combining all the estimates given above,
\begin{align*}
	\frac{1}{2} \frac{d}{d\tau} \| \teps \|_{\dot H^{k+1}}^2
	&= -(\calL \teps, \teps)_{\dot H^{k+1}} - \left( \tps \calL \tepu, \teps \right)_{\dot H^{k+1}}
	+\left( \tps \left(N(\ep) -U \cdot \na \Psi \right), \teps \right)_{\dot H^{k+1}} \\
	& \le - \frac{7}{8}\| \teps \|_{\dot H^{k+2}}^2 - \frac{2k+3}{4} \| \teps \|_{\dot H^{k+1}}^2 + C  \| \teps \|_{H^k}^2 + \frac{1}{8} \| \teps \|_{\dot H^{k+1}}^2 + C \| \tepu \|_{\tilde B}^2 \\
	& \quad + \frac{1}{8} \| \teps \|_{\dot H^{k+1}}^2 + \frac{1}{8} \| \teps \|_{\dot H^{k+2}}^2 + C \left( \| \tepu \|_{\tilde B}^4 + \| \teps \|_{H^k}^4 \right) \\
	& \quad + \frac{1}{8} \| \teps \|_{\dot H^{k+1}}^2 +  \frac{1}{8} \| \teps \|_{\dot H^{k+2}}^2 
	+ C \| U \|_{\dot H^1 \cap \dot H^{k+1}}^2 + C \| \teps \|_{H^k}^2 \\
	& \le -\frac{k}{2} \| \teps \|_{\dot H^{k+1}}^2 + C \left( \| U \|_{\dot H^1 \cap \dot H^{k+1}}^2+ \| \teps \|_{H^k}^2 + \| \tepu \|_{\tilde B}^2 \right) \\
	& \le -\frac{k}{2} \| \teps \|_{\dot H^{k+1}}^2 + C \left( \delta_1+ \delta_3 +\delta_4 \right)^2e^{-\delg \tau},
\end{align*}
which is (\ref{energy estimate of k+1 derivative}).
{  Here the differentiability of $\tau \mapsto \| \teps \|_{\dot H^{k+1}}^2$ is in the classical sense ensured by Theorem \ref{theorem: LWP} together with the initial regularity assumptions on $(\tilde{\ep}_{s0}, U_0) \in H^{k+3} \times H_{\sigma}^{k+3}$.}
 Hence by Gronwall's inequality,
\begin{align*}
	\| \teps (\tau) \|_{\dot H^{k+1}}^2
	& \le e^{-\tau} \| \teps(0) \|_{\dot H^{k+1}}^2 + C\int_0^\tau e^{-(\tau-s)} \left( \delta_1+ \delta_3 +\delta_4 \right)^2e^{-\delg s} ds \\
	& \le C(\delta_0 + \delta_1 + \delta_3 + \delta_4)^2 e^{-\delg \tau} \le \frac{1}{4} \delta_2^2 e^{-\delg \tau},
\end{align*}
where the last inequality holds from the conditions of $\{\delta_i \}$ given in Proposition \ref{proposition: bootstrap}.
\end{proof}

\subsection{Control of unstable mode}
\begin{proof}[Proof of Proposition \ref{proposition: bootstrap}]
\mbox{}

\noindent \underline{\textit{Improvement of the bootstrap assumptions.}} By contradiction, we assume that there exists $(\mu_0, \tilde \ep_{s0}, U_0)$ satisfying (\ref{bootstrap assum NS-KS: initial data}), such that for any $\tilde \ep_{u0}$ with $\| \tilde \ep_{u0} \|_{\tilde B} \le \delta_3$, the exit time $\tau^*$ defined in \eqref{exiting time} is finite. Then according to Lemma \ref{lemma: improve bootstrap flow}, Lemma \ref{lemma: improve bootstrap stable part}, and Lemma \ref{lemma: improve bootstrap higher regularity of stable part}, combined with the continuity of the solution to system \eqref{linearized equation: coupled system of stable, unstable, U term} with respect to $\tau$, there exists $0 < \tau_\epsilon \ll 1$ such that (\ref{bootstrap assum: stable part}), (\ref{bootstrap assum: stable part higher regularity}), and (\ref{bootstrap assum: flow}) hold for all $\tau \in [0, \tau^* + \tau_\epsilon]$. 
 
 \mbox{}
 
 \noindent \underline{\textit{Outgoing flux property.}} With the argument above, we see that the only scenario for the solution to exit the bootstrap regime is when the bootstrap assumption for the unstable part $\tepu$ given in (\ref{bootstrap assum: unstable part}) does not hold for $\tau > \tau^*$. Besides, from the local wellposedness of \eqref{equation: NS-KS velocity form} given in Theorem \ref{theorem: LWP}, we can see that $\tau \mapsto \tilde \e_u (\tau)$ is continuous. Hence, if we define
  \[
  \tilde\calB(\tau)= \{ v \in \tilde H_u^k: \| v\|_{\tilde B} \le \delta_3 e^{-\frac{7}{10} \delg \tau} \},
  \] 
  then from the previous contradiction assumption, 
   \[
   \begin{cases}
   	\tepu (\tau) \in \tilde\calB (\tau), & \forall \; \tau \in [0,\tau^*], \\
   	\tepu(\tau^*) \in \partial \calB (\tau^*), & \tau = \tau^*.
   \end{cases}
   \]
 Moreover, from (\ref{decay rate: stable part modified}) and (\ref{linearized equation: coupled system of stable, unstable, U term}),
 \begin{align*}
 	\frac{d}{d\tau}\frac{1}{2} \| \tepu \|_{\tilde B}^2
	 & = \left( -\tpu\calL \tpu \tepu, P_u  \right)_{\tilde B} + \left( \tpu \left(-U \cdot \nabla \Psi + N(\ep) \right), \tepu \right)_{\tilde B} \\
	 & \ge - \frac{6 \delg}{10} \| \tepu \|_{\tilde B}^2 + \left( \tpu \left(-U \cdot \nabla \Psi + N(\ep) \right), \tepu \right)_{\tilde B}.
 \end{align*}

Similar to the estimate of \eqref{nonliear estimate: Hk}, by bootstrap assumptions (\ref{bootstrap assum: stable part}), (\ref{bootstrap assum: stable part higher regularity}) and (\ref{bootstrap assum: unstable part}), the nonlinear part can be controlled by
\begin{align*}
	|(\tpu N(\ep), \tepu)_{\tilde B}|
	& \le C \| \tpu N(\ep) \|_{\tilde B} \| \tepu \|_{\tilde B}
	\le C \| N(\ep) \|_{H^k} \| \tepu \|_{\tilde B}  \\
	& \le C \left( \| \teps \|_{H^{k+1}}^2 + \| \tepu \|_{\tilde B}^2 \right) \| \tepu \|_{\tilde B} \\
	& \le C \left( (\delta_1^2 + \delta_2^2) e^{-\delg \tau} + \delta_3^2 e^{-\frac{7}{5} \delg \tau} \right) \delta_3 e^{-\frac{7}{10} \delg \tau} \le C \delta_2^2 \delta_3 e^{-\frac{17}{10}\delg \tau}, \quad 
	\forall \; \tau \in [0,\tau^*],
\end{align*}
for some universal constant $C=C(k)>0$.

For the advection term,  from the estimate obtained in (\ref{advection term: energy estimate}), by Cauchy-Schwarz inequality and bootstrap assumption (\ref{bootstrap assum: unstable part}), one concludes that
\begin{align*}
      |(\tpu \left(U \cdot \na \Psi \right), \tepu)_{\tilde B}|
	 \le C \| U \cdot \nabla \Psi \|_{H^k} \| \tepu \|_{\tilde B} 
	 \le C \delta_3\delta_4 e^{-\left( \frac{1}{8} + \frac{7\delg}{10} \right) \tau}, \quad \forall \tau \in [0,\tau^*].
\end{align*}
In summary, we conclude the outer-going flux property of $\tepu$ as follows: 
\begin{align}
	& \quad \frac{d}{d\tau} \Big|_{\tau =\tau^*} \left( e^{\frac{7}{5} \delg \tau}  \| \tepu \|_{\tilde B}^2 \right) \notag \\
	& \ge \frac{\delg}{5} e^{\frac{7}{5} \delg \tau} \| \tepu \|_{\tilde B}^2  - C \delta_3 \left(\delta_2^2 e^{-\frac{3 \delg}{10} \tau} + \delta_4 e^{-\frac{1}{16} \tau}\right) \Big|_{\tau = \tau^*} \notag\\
	& \ge \frac{\delta_g}{5} \delta_3^2  - C\delta_3(\delta_2^2 + \delta_4)  \ge \frac{\delg}{10} \delta_3^2 >0.
	\label{outer going flux property: NS-KS}
\end{align}

\mbox{}

\noindent \underline{\textit{Brouwer's topological argument.}}
By the local well-posedness theory and \eqref{outer going flux property: NS-KS}, we have the continuity of the mapping $\tilde \ep_{u0} \mapsto \tepu \left(\tau^*(\tilde \ep_{u0}) \right)$, where $\tau^*(\tilde \ep_{u0})$ denotes the exit time with initial data $\tilde \ep_{u0}$. Subsequently, we define a continuous map $\Phi:\overline{B_{\tilde B}(0,1)} \to \partial {B_{\tilde B}(0,1)}$ as follows:
\[
f \in \overline{B_{\tilde B}(0,1)}  \mapsto \tilde \ep_{u0}:= \delta_3 f \in \overline{B_{\tilde B}(0, \delta_3)} \mapsto \tepu(\tau^*) \in \partial \tilde \calB(\tau^*) \mapsto \frac{\tepu(\tau^*)}{\| \tepu(\tau^*) \|_{\tilde B}} \in \partial B_{\tilde B}(0,1),
\]
where $B_{\tilde B}(0,1)$ represents the unit ball centered at the origin in the finite-dimensional space $(\tilde H_u^k, | \cdot |_{\tilde B})$. Particularly, when $f \in \partial B_{\tilde B}(0,1)$, i.e., $\delta_3 f \in \partial B_{\tilde B} (0,\delta_3)$, according to the outer going flux property of the flow as indicated in (\ref{outer going flux property: NS-KS}), $\tau =0$ is the exit time, thus leading to $\Phi(f) = f$. In essence, $\Phi= Id$ on the boundary $\partial B_{\tilde B}(0,1)$. However, upon applying Brouwer's fixed-point theorem to $-\Phi$ on $B_{\tilde B}(0,1)$, we conclude the existence of a fixed point of $-\Phi$ on the boundary $\partial B_{\tilde B}(0,1)$, contradicting the assertion that $\Phi =Id$ on the boundary. Consequently, we have established Proposition \ref{proposition: bootstrap}.
\end{proof}

\subsection{Existence of a finite blowup solution to (\ref{equation: NS-KS velocity form}) with finite mass}

\begin{proof}[Proof of Theorem \ref{main thm}]

\mbox{}

\noindent \underline{\textit{Construction of initial datum.}} Firstly, we choose an arbitrary integer $s \ge 3$, an arbitrary divergence-free vector field $u_0 \in H_\sigma^\infty$. If we set $k=s-1$ and $\delta_g \ll 1$ in Proposition \ref{proposition: bootstrap}, then there exist constants $R \gg 1$ from Definition \ref{def: modified unstable space} and $\{\delta_i\}_{0 \le i \le 4}$ satisfying \eqref{constants: relationship}, such that Proposition \ref{proposition: bootstrap} holds true. 

For the initial velocity field $u_0 \in H_{\sigma}^\infty$ and the constant $0< \delta_0 \ll 1$ determined above, we choose the initial data of scaling parameter $\mu_0 \in (0,\frac{1}{8}\delta_0)$ sufficiently small, such that the initial renormalized velocity field  $U_0 := \mu_0 u_0 \left( \mu_0 \cdot \right)$ satisfies
\[
\| U_0 \|_{\dot H^1 \cap \dot H^s}^2 = \| \mu_0 u_0 (\mu_0 \cdot) \|_{\dot H^1 \cap \dot H^s}^2 = \mu_0 \| u_0 \|_{\dot H^1}^2 + \mu_0^{2s-1} \| u_0 \|_{\dot H^s}^2  \le \left( \frac{1}{8} \delta_0 \right)^2.
\]
In addition, for $Q$ given by (\ref{Q: definition}), we choose $R_0 \gg R \gg 1$ sufficiently large such that 
 \begin{align}
 & \quad \big \| \tps \left( (1-\chi_{R_0}) Q \right) \big\|_{H^{s}} + \big \| \tpu \left( (1-\chi_{R_0}) Q \right) \big\|_{H^{s}}  \notag \\
 &\le C \| (1- \chi_{R_0})  Q \|_{H^{s}} \le \frac{1}{8} \delta_0 \ll Q(2R),
 \label{R0: choice}
 \end{align}
hence Proposition \ref{proposition: bootstrap} holds with the initial datum 
 \be
 (\mu_0, \tilde \ep_{s0}, U_0) = \left(\mu_0, -\tps \left( (1-\chi_{R_0}) Q \right), \mu_0 u_0 \left( \mu_0 \cdot \right) \right),
 \label{initial data}
 \ee
 which yields that there exists $\tilde \ep_{u0} \in \tilde H_u^k$ such that the solution $(\mu, \teps, \tepu,U)$ to (\ref{linearized equation: coupled system of stable, unstable, U term}) globally exists and satisfies (\ref{bootstrap assum: stable part}), (\ref{bootstrap assum: stable part higher regularity}), (\ref{bootstrap assum: unstable part}) and (\ref{bootstrap assum: flow}) on $[0,+\infty)$. In other words, in the self-similar coordinate, the solution $(\mu,\Psi, U)$ to (\ref{renormalized equation: NS-KS}) with initial datum $(\mu_0, \Psi_0, U_0)$, where $\Psi_0$ is defined by
 \be
 \Psi_0 = Q + \tilde \ep_{s0} + \tilde \ep_{u0} = Q - \tps \left( (1-\chi_{R_0}) Q \right) + \tilde \ep_{u0}
 = \chi_{{R_0}} Q  + \tpu \left( (1-\chi_{R_0}) Q \right) + \tilde \ep_{u0},
 \label{initial data}
 \ee
globally exists and the corresponding solution $\Psi(\tau)$ can be decomposed into $\Psi(\tau) = Q +\ep (\tau)$ with
 \begin{align*}
  \| \ep(\tau) \|_{H^{s}}  &  \le \| \teps(\tau) \|_{H^{s}} + \| \tepu (\tau) \|_{H^{s}} \\
  &  \le C \left( \| \teps(\tau) \|_{H^{s}} + \|\tepu(\tau) \|_{\tilde B} \right) \le C (\delta_1 + \delta_2 + \delta_3) e^{-\frac{1}{2}\delg \tau}, \quad \forall \; \tau \ge 0,
 \end{align*}
 for some universal constant $C>0$.

\mbox{}

\noindent \underline{\textit{Non-negativity and finite-mass of the solution.}}
  Observing from (\ref{initial data}) and recalling Lemma \ref{lemma: properties of modified projection} that $\tilde{\ep}_{u0} \in \tilde H_u^{s-1} \subset C^\infty_0$, the initial data $\Psi_0$ constructed in \eqref{initial data} satisfies $\Psi_0 \in C_0^\infty \subset L^1$. Furthermore, according \eqref{constants: relationship}, \eqref{bootstrap assum: unstable part} and \eqref{R0: choice}, by Sobolev embedding inequality, there exists a universal constant $C>0$ such that
\[
\| \tilde P_u \left( \left( 1- \chi_{R_0} \right) Q \right) + \tilde \ep_{u0} \|_{L^\infty} \le
\| \tilde P_u \left( \left( 1- \chi_{R_0} \right) Q \right)\|_{H^{s}} + \|\tilde \ep_{u0} \|_{H^{s-1}} \le C (\delta_0 + \delta_3) \ll Q(2R).
\]
Hence with the previous argument, the radial decreasing of profile $Q$ and the fact that $R_0 \gg R$, within the $2R$-ball $B(0,2R)$, which contains the support of $\tilde P_u \left( \left( 1- \chi_{R_0} \right) Q \right) + \tilde \ep_{u0}$, we obtain that
\begin{align*}
\Psi_0(y) 
&= \chi_{R_0 } Q + \tilde P_u \left( \left( 1- \chi_{R_0} \right) Q \right) + \tilde \ep_{u0} \\
& \ge Q(2R) - \| \tilde P_u \left( \left( 1- \chi_{R_0} \right) Q \right) + \tilde \ep_{u0} \|_{L^\infty} \ge \frac{1}{2} Q(2R) >0, \quad \forall \; |y| \le 2R.
\end{align*}
In addition, outside of $2R$-ball, we have $\Psi_0(y) = \chi_{R_0} Q  \ge 0$ when $|y| \ge 2R$. Consequently, we conclude the non-negativity of the initial density $\Psi_0$.

If we go back to the original system \eqref{equation: NS-KS velocity form}, by \eqref{self-similar coordinate 2} and \eqref{self-similar coordinate},  {then the related initial datum becomes as follows:}
\[
(\rho_0,u_0) = \left( \frac{1}{\mu_0^2} \Psi_0 \left( \frac{x}{\mu_0} \right), u_0 \right),
\]
where $\rho_0 \in C_0^\infty \in L^1$. And by the local-wellposedness theory (see Theorem \ref{theorem: LWP}), this ensures that the corresponding solution $(\rho(t),u(t)) \in H^\infty \times H_\sigma^\infty$ throughout its lifespan.

\noindent \underline{\textit{Finite blowup rate.}} From \eqref{self-similar coordinate 2}, by chain rule,
\[
\frac{d\mu}{dt} = \frac{d \mu}{d\tau} \frac{d \tau}{dt}  = -\frac{\mu}{2} \frac{1}{\mu^2} = -\frac{1}{2 \mu}, \quad \Rightarrow \quad \frac{d}{dt} \mu^2 =-1.
\]
This yields that the solution would blow up at finite time $T = \mu_0^2$ and $\mu$ can be explicitly solved by
\[
\mu(t) = \sqrt{\mu_0^2 -t} = \sqrt{T-t}, \quad \forall \; t \in [0,T).
\]
Hence we have finished the proof of Theorem \ref{main thm}.

\end{proof}

\appendix
\section{Local-wellposedness to Keller-Segel-Navier-Stokes system}
\label{appendix: LWP}
\begin{theorem}[Local wellposedness]
\label{theorem: LWP}
 Fix  $(\rho_0,u_0) \in H^m(\RR^3) \times H_\sigma^m(\RR^3)$ for $m \ge 2$. Then there exist a small constant $T_1 = T_1(\| \rho_0 \|_{H^{\max \{ 2, m-1\}}}, \| u_0 \|_{H^{\max\{ 2,  m-1 \} } })>0$ such that there exists a unique solution 
 { 
 \be
 (\rho,u) \in \bigcap_{j = 0, 1} 
 \left(C^j([0,T_1],H^{m-2j}) \times C^j([0,T_1],H_\sigma^{m-2j}) \right)
 \label{LWP: regularity}
 \ee
 to \eqref{equation: NS-KS velocity form}. Denote the maximal existence time as $T^* = T^*(\rho_0, u_0) \le \infty$. This solution satisfies the following properties:
 \begin{enumerate}
 \item Instant smoothing: \be
 (\rho,u) \in C^\infty([\epsilon, T^*) \times \RR^3) \times C^\infty ([\epsilon,T^*) \times \RR^3), \quad \forall \; \epsilon >0.
 \label{LWP: improve regularity}
 \ee
 \item Blowup criteron: $T^* < \infty$ implies
 \be \lim_{t \to T^*} \left(\|\rho(t)\|_{H^2} + \| u(t)\|_{\dot H^1 \cap \dot H^2} \right)= \infty.
 \label{eqblowupcriterion} \ee
 \item Persistence of non-negativity of density: if $\rho_0$ is nonnegative, then $\rho(t,x)$ stays nonnegative on $[0,T^*)$.
 \end{enumerate}
 }
\end{theorem}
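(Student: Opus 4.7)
The plan is the classical scheme for quasilinear parabolic systems: Picard iteration plus high-order energy estimates. First, I set up the iteration. Given $(\rho^n, u^n)$, define $(\rho^{n+1}, u^{n+1})$ by solving the linear heat equations
\begin{align*}
\partial_t \rho^{n+1} - \Delta \rho^{n+1} &= -u^n \cdot \nabla \rho^n - \nabla \cdot (\rho^n \nabla c^n), \\
\partial_t u^{n+1} - \Delta u^{n+1} &= -\mathbb{P}(u^n \cdot \nabla u^n) - \mathbb{P}(\rho^n e_3),
\end{align*}
where $-\Delta c^n = \rho^n$ and $\mathbb{P}$ is the Leray projector, starting from $(\rho^0, u^0) \equiv (\rho_0, u_0)$. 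A uniform $L^\infty_t H^m \cap L^2_t H^{m+1}$ bound on $[0, T_1]$ keeps the iterates in a fixed ball, and differencing successive iterates and estimating in $L^\infty_t H^{m-1} \cap L^2_t H^m$ yields contraction. The limit gives the solution with regularity \eqref{LWP: regularity} --- the $C^1_t H^{m-2j}$ component follows from reading $\partial_t \rho, \partial_t u$ directly off the equations --- and uniqueness is inherited from the contraction.

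The core a priori estimate is the $H^m$ energy bound. Rewriting $-\nabla \cdot (\rho \nabla c) = -\nabla \rho \cdot \nabla c + \rho^2$ via $\Delta c = -\rho$, and applying Kato--Ponce / Moser commutator inequalities, the Calder\'on--Zygmund bound $\|D^2 c\|_{H^{m-1}} \lesssim \|\rho\|_{H^{m-1}}$, and the Sobolev embedding $H^2 \hookrightarrow L^\infty$ in three dimensions, one obtains
$$\frac{d}{dt}\bigl(\|\rho\|_{H^m}^2 + \|u\|_{H^m}^2\bigr) + \|\rho\|_{\dot H^{m+1}}^2 + \|u\|_{\dot H^{m+1}}^2 \le C \, P\bigl(\|\rho\|_{H^s}, \|u\|_{H^s}\bigr) \bigl(\|\rho\|_{H^m}^2 + \|u\|_{H^m}^2 + 1\bigr),$$
for $s = \max\{2, m-1\}$ and $P$ polynomial. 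Gronwall then yields the lifespan $T_1 = T_1(\|\rho_0\|_{H^s}, \|u_0\|_{H^s})$. The blow-up criterion \eqref{eqblowupcriterion} follows by refining the estimate so the multiplier depends only on $\|\rho\|_{H^2} + \|u\|_{\dot H^1 \cap \dot H^2}$, which controls $\|\rho\|_{L^\infty}, \|u\|_{L^\infty}, \|\nabla u\|_{L^\infty}$ via Sobolev embedding in $\RR^3$. Instant smoothing \eqref{LWP: improve regularity} is a bootstrap: the parabolic gain $\rho, u \in L^2_t H^{m+1}$ produces some $t_0 \in (0, \epsilon)$ with $(\rho(t_0), u(t_0)) \in H^{m+1}$, and restarting local existence and iterating upgrades the regularity to arbitrary order on $(\epsilon, T^*)$.

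For non-negativity, I rewrite the density equation as $\partial_t \rho + (u + \nabla c) \cdot \nabla \rho - \Delta \rho = \rho^2$ and test with $\rho_- := \min(\rho, 0) \le 0$. Using $\nabla \cdot u = 0$ and $\Delta c = -\rho$, the advection term contributes $\tfrac{1}{2}\int \rho \rho_-^2 \, dx = \tfrac{1}{2}\int \rho_-^3 \, dx$, and combined with $\int \rho^2 \rho_- \, dx = \int \rho_-^3 \, dx$ one gets
$$\frac{1}{2}\frac{d}{dt}\|\rho_-\|_{L^2}^2 + \|\nabla \rho_-\|_{L^2}^2 = \tfrac{1}{2}\int \rho_-^3 \, dx \le 0,$$
so $\rho_-(0) = 0$ forces $\rho_- \equiv 0$ on $[0, T^*)$. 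The main technical obstacle is the nonlocal chemotactic coupling $\nabla c = \nabla(-\Delta)^{-1}\rho$ on $\RR^3$: without an $L^1$ assumption on $\rho$ one cannot place $\nabla c \in L^2$, so $\nabla c$ must always be paired against a gradient of $\rho$ in the integration by parts and the Calder\'on--Zygmund gain exploited consistently. Obtaining the sharp $H^{\max\{2, m-1\}}$ dependence of $T_1$ (rather than $H^m$) requires equally careful distribution of derivatives in the commutator estimates so that only the lower-order norm appears on the bounding side; everything else follows classical parabolic / Navier--Stokes local theory.
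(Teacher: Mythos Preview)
Your approach is correct and standard, but it differs from the paper's in two places worth noting. For local existence the paper works directly with the Duhamel formulation \eqref{eqDuhamelNSKS} and the semigroup smoothing estimate $\|e^{t\Delta}\|_{H^m \to H^{m+1}} \le Ct^{-1/2}$, closing a contraction in $C_t H^m$ rather than running energy estimates; the key product bound there is the bilinear estimate $\|\rho \nabla \Delta^{-1}\rho\|_{H^m} \le C\|\rho\|_{H^m}\|\rho\|_{H^{m_*}}$ with $m_* = \max\{2,m-1\}$, which makes the dependence of $T_1$ on the lower norm transparent. Your energy method achieves the same conclusion but requires more care distributing derivatives to isolate the $H^{m_*}$ norm on the bounding side. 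For non-negativity, the paper uses the pointwise parabolic maximum principle (contradiction at a negative interior minimum), whereas your Stampacchia argument testing with $\rho_-$ is equally valid and arguably more robust for weaker solutions; the computation giving $\frac{1}{2}\int\rho_-^3 \le 0$ is correct.

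One small slip: you assert that $\|u\|_{\dot H^1 \cap \dot H^2}$ controls $\|\nabla u\|_{L^\infty}$, but this fails in $\RR^3$ (the embedding $H^1 \hookrightarrow L^\infty$ is false). This does not break the blowup criterion, but you should instead argue as the paper does: since $T_1$ depends only on the $H^{\max\{2,m-1\}}$ norm, boundedness of the $H^2$ norm propagates the $H^2$ solution, and then by induction propagates $H^m$ for every $m$, because at each step the lifespan is controlled by the already-bounded lower norm.
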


\begin{proof} \underline{\textit{Local wellposedness.}} This is a standard application of semigroup estimate and contraction mapping principle, see for \cite[Chapter 5]{MR4475666}  for the Navier-Stokes case. 

 We write \eqref{equation: NS-KS velocity form} in the Duhamel form
\be
\begin{cases}
  \rho = e^{t\Delta} \rho_0 + \int_0^t e^{(t-s)\Delta} \nabla \cdot \left(-u \rho + \rho \nabla \Delta^{-1} \rho\right) ds, \\
  u = e^{t\Delta} u_0 +   \int_0^t e^{(t-s)\Delta} \mathbb{P} \left( -\nabla \cdot (u \otimes u ) + \rho e_3 \right) ds,
\end{cases}
\label{eqDuhamelNSKS}
\ee
where we use the incompressiblity to rewrite $u \cdot \nabla \rho = \nabla \cdot(u\rho)$ and to apply the Leray projection $\mathbb{P} := I - \nabla \Delta^{-1}\circ  \div$ (see \cite[Section 1.3.5]{MR4475666}) to remove the pressure term. Noticing that for $m \ge 2$, with $m_* := \max \{2, m-1\}$, the aggregation term of Keller-Segel equation can be controlled by
\begin{align} 
\| \rho \nabla \Delta^{-1} \rho \|_{H^m} 
&\le C \| \rho \|_{H^m} \| \nabla \Delta^{-1} \rho\|_{L^\infty}+ C \| \rho \|_{L^\infty} \| \nabla \Delta^{-1} \rho \|_{\dot H^1 \cap \dot H^m} \notag \\
& \quad + C \sum_{\substack{|\a| + |\beta| = m,\\ 1 \le |\a|, |\beta| \le m-1}}\| \pa^\a \rho \|_{L^4} \| \pa^\beta \nabla \Delta^{-1} \rho \|_{L^4} \notag  \\
& \le C \| \rho \|_{H^m} \| \rho \|_{H^{m_*}}, 
\label{energy estimate of nonlinear term}
\end{align}
for some constant $C=C(m)>0$ uniformly in $\rho \in H^m$ with $m \ge 2$. And similarly,
\bee
  \| fg \|_{H^m} \le C \| f \|_{H^m} \| g \|_{\dot H^1 \cap \dot {H^{m_*}} } + C \| g \|_{H^m} \| f\|_{\dot H^1 \cap \dot {H^{m_*}} },
\eee
for any $f,g \in H^m$ with $m \ge 2$.

We apply the boundedness of $\mathbb{P}$ in $H^m$, smoothing estimate of the heat kernel $\| e^{t\Delta}\|_{H^m\to H^{m+1}} \le C t^{-\frac 12}$, together with the boundedness of { Leray projector $\PP$ in $H^m$ (see \cite[Lemma $1.15$]{MR4475666})} to obtain
\bee
  &&\left\|  \int_0^t e^{(t-s)\Delta} \nabla \cdot \left(-u \rho + \rho \nabla \Delta^{-1} \rho\right) ds \right\|_{H^m} \\
  &\le & C \int_0^{t} (t-s)^{-\frac 12} \left( \| u \rho \|_{H^m} + \| \rho \nabla \Delta^{-1} \rho\|_{H^m} \right)(s)  ds \\
  &\le & C t^{\frac 12} \left[  \left( \| \rho \|_{L^\infty_s H^m_x}+ \| u \|_{L^\infty_s H^m_x} \right) \| \rho \|_{L^\infty_s H^{m_*}_x} + \| \rho \|_{L^\infty_s H^m_x} \| u \|_{L^\infty_s(\dot H^1_x \cap \dot H^{m_*}_x)}   \right] \\
  \text{ and } \\
  && \left\|  \int_0^t e^{(t-s)\Delta} \mathbb{P} \left( -\nabla \cdot (u \otimes u ) + \rho e_3 \right) ds \right\|_{H^m} \\
  &\le & C   \int_0^{t}  (t-s)^{-\frac 12}\left( \| u\otimes u\|_{H^{m}} + \| \rho \|_{H^m} \right)(s) ds \\
  &\le & C t^\frac 12  \left(  \| u \|_{L^\infty_s  H^m_x}  \| u \|_{L^\infty_s(\dot H^1_x \cap \dot H^{m_*}_x)} +  \| \rho \|_{L^\infty_s H^m_x} \right).
\eee
{ It is then easy to verify that \eqref{eqDuhamelNSKS} has a unique solution on $[0,T_1]$ by contraction mapping principle for some $T_1>0$ only depending on $ \|u\|_{L^\infty_t (\dot H^1_x \cap \dot H^{m_*}_x)} + \| \rho \|_{L^\infty_t H^{m_*}_x} + 1$. For the regularity \eqref{LWP: regularity}, the $j=0$ case follows the boundedness above and the Duhamel form \eqref{eqDuhamelNSKS}, which further implies the $j=1$ case using the equation \eqref{equation: NS-KS velocity form}. }

{As for \eqref{LWP: improve regularity}, similar to the previous argument, we can easily improve half order of regularity instantly by the smoothing estimate of the heat kernel $\|e^{t\Delta}\|_{H^m \to H^{m+\frac{3}{2}}} \le C t^{-\frac{3}{4}}$, then it is easy to verify \eqref{LWP: improve regularity} by induction.}

In addition, noticing that the lifespan depending on one less order regularity of the solution, the blowup criterion \eqref{eqblowupcriterion} now follows this local wellposedness and induction on $m \ge 2$.

\vspace{0.2cm}

\noindent \underline{\textit{Persistence of non-negativity of the density.}} With $m \ge 2$, this becomes a classical solution to \eqref{equation: NS-KS velocity form}, and the persistence of non-negativity of the density $\rho$ is a standard parabolic maximum principle. We give the proof out of completeness.

{ 
Suppose there exists $0 < T < T^*$ such that $\inf_{(t, x) \in [0, T] \times \RR^3} \rho(t, x) < 0$. Since $\rho_0 \ge 0$ and $\rho(t,x) \in C^0([0, T]; H^2(\RR^3))$ from \eqref{LWP: regularity}, together with the Sobolev embedding $H^2(\RR^3) \subset C^{0,\frac{1}{2}}(\RR^3)$, there exists $(t_*,x_*) \in (0,T] \times \RR^3$ such that
\[ \rho(t_*, x_*) = \min_{(t, x) \in [0, T] \times \RR^3} \rho (t, x) <0. \]
From \eqref{LWP: improve regularity}, this minimum implies 
\[
\pa_t \rho(t_*, x_*) \le 0, \quad \nabla_x \rho (t_*, x_*) = 0, \quad \Delta \rho(t_*, x_*) \ge 0.
\]
Hence we have 
\[
0 = \left[ \partial_t \rho + u \cdot \na \rho -\Delta \rho - \na \rho \cdot \na \Delta \rho - \rho^2 \right]\Big|_{(t_*,x_*)} 
\le -\rho^2(t_*,x_*)<0,
\]
which leads to a contradiction and concludes the proof. 
}
\end{proof}

\section{Behavior of flow}\label{appB}

In this section, we prove the estimates \eqref{growth of u(t)} and \eqref{growth of body pressure nabla pi} regarding singular behavior of fluid part. To begin with, we recall from Proposition \ref{proposition: bootstrap} that
\begin{align}
& \qquad \qquad \; u(t,x) = \frac{1}{\sqrt{T-t}} U\left(t, \frac{x}{\sqrt{T-t}} \right), \quad  \text{ and } \quad 
\rho(t,x) = \bar Q(t,x) + \bar \ep(t,x), \nonumber \\
& \quad \text{ with } \quad \quad  \bar Q(t,x)= \frac{1}{T-t} Q \left( \frac{x}{\sqrt{T-t}} \right),
 \; 
 \quad \text{ and } \quad 
 \bar \ep (t,x) =\frac{1}{T-t} \ep\left( t, \frac{x}{\sqrt{T-t}} \right), \nonumber \\
& \quad \text{ where } \quad \| U(t) \|_{\dot H^1 \cap \dot H^{k+1}} \le \delta_4 (T-t)^{\frac{1}{8}}  \quad \text{ and } \quad  \| \ep(t) \|_{H^{k+1}} \le 2 \delta_2 (T-t)^{\frac{\delta_g}{2}}. \label{eqbddUebar}
\end{align}

\noindent \underline{1. Control of velocity.} 
Via the Duhamel formulation of $u$ \eqref{eqDuhamelNSKS}, we further decompose $u = \sum_{j=1}^4 u^{(j)}$ as
\bee
u^{(1)} = e^{t \Delta} u_0, && u^{(j)} = \int_0^t e^{(t - s)\Delta}  \PP F_j(s) ds \quad {\rm for \,\,}j = 2, 3, 4, \\
{\rm with} \quad&&   F_2 = \bar Q e_3,\quad F_3 = \bar \ep e_3,\quad F_4 = -\nabla \cdot (u \otimes u).
\eee

\mbox{}

\noindent\underline{\textit{Upper bound of $u^{(j)}$.}} We claim for $t \in (0, T)$,
\bea 
\|u^{(j)}(t)\|_{L^\infty} \le \left\{ \begin{array}{ll}
    C \;, & j = 1, 3, 4; \\
    C|\log(T-t)| \;, & j = 2.
\end{array}\right. \label{equpperbdduj}
\eea

Clearly they imply the upper bound in \eqref{growth of u(t)}. 

Indeed, the estimate of $u^{(1)}$ directly follows from $\| e^{t \Delta}\|_{H^2 \to H^2} \le 1 $, $u_0 \in H^2(\RR^3)$ and Sobolev embedding. For $u^{(2)}$ and $u^{(3)}$ we exploit the classical $L^2$-$L^\infty$ estimate
\be
\| e^{t \Delta} \|_{L^2 \to L^\infty} \le C t^{-\frac 34},
% \frac d2(\frac{1}{2} - \frac{1}{p})},\quad\forall\,\, p \in [2, \infty],
\label{heat kernel: decay}
\ee
to integrate the Duhamel formula with $\| \bar Q(t)\|_{L^2} =  (T-t)^{-\frac 14} \| Q \|_{L^2}$ and $\| \bar \e(t)\|_{L^2} \le 2\delta_2(T-t)^{-\frac 14 + \frac{\delta_g}{2}}$ from \eqref{eqbddUebar}. Finally, for $u^{(4)}$, we firstly combine the boundedness of $u^{(j)}$ ($j=1,2,3$), \eqref{heat kernel: decay}, and $\| \nabla u \|_{L^2} = (T-t)^{-\frac 14} \| U \|_{\dot H^1} \le C (T-t)^{-\frac 18}$ from \eqref{eqbddUebar} to compute 
 \begin{align*}
      \| u^{(4)}(t) \|_{L^\infty}
      & \le C \int_0^t (t-s)^{-\frac{3}{4}} \big \| \na \cdot (u \otimes u) \big\|_{L^2}(s) ds 
      \le C \int_0^t (t-s)^{-\frac{3}{4}} \| u \|_{L^\infty} \| \na u \|_{L^2}(s) ds 
      \\
      & \le C \int_0^t (t-s)^{-\frac{3}{4}} \left( \| u^{(4)}(s) \|_{L^\infty} + |\log (T-t)| \right) (T-s)^{-\frac{1}{8}} ds\\
      & \le C + C \int_0^t (t-s)^{-\frac{3}{4}} (T-s)^{-\frac 18} \| u^{(4)}(s) \|_{L^\infty} ds.
 \end{align*}
Note that the boundedness of $u^{(j)}$ for $j = 1, 2, 3$ and \eqref{eqbddUebar} imply that
\[ \| u^{(4)}(t) \|_{L^\infty} \le C|\log (T-t)| + \|u(t) \|_{L^\infty} \le C|\log (T-t)| + \frac{C}{\sqrt{T-t}}\| U \|_{L^\infty} \le C (T-t)^{-\frac 38}.\]
Plugging this boundedness in the integral estimate and iterating 4 times will lead to the constant bound in \eqref{equpperbdduj}.

\mbox{}

\noindent\underline{\textit{Lower bound of $u^{(2)}$.}} Next, we are devoted to showing
\be 
 \| u^{(2)}(t)\|_{L^\infty} \ge C |\log (T-t)|,\quad {\rm as}\,\,t \to T,\label{eqlowerbddu2}
\ee
which implies the lower bound in \eqref{growth of u(t)} in viewing the upper bound of $u^{(j)}$ for $j = 1, 3, 4$ from \eqref{eqbddUebar}. 

Recalling the explicit formula of $Q$ \eqref{Q: definition} and $\PP = I - \nabla \Delta^{-1}\circ \div$, we compute 
 \begin{align}
     &  \Delta^{-1} \pa_3 Q = \frac{4y_3}{2+r^2},\quad \Rightarrow \quad \pa_3 \Delta^{-1} \pa_3 Q = \frac{8+4r^2-8y_3^2}{(2+r^2)^2}, 
     \label{Q: Leray projection result}\\
     \Rightarrow \quad &(\PP (Qe_3))_3 = Q - \pa_3 \Delta^{-1} \pa_3 Q  = \frac{8(2 + y_3^2)}{(2+r^2)^2} \ge \frac{16}{(2+r^2)^2}. \nonumber
\end{align}
Then from the positivity of the integral kernel for $e^{t\Delta}$ and compactness of $[0, T]$, we have 
\[ C_T := \inf_{t \in [0, T]} e^{t\Delta}(\PP (Qe_3))_3 > 0. \]
Consequently, for $t > \frac T2$, 
\[ (u^{(2)})_3(t, 0) = \int_0^t \left[e^{(t-s) \Delta}  (\PP (\bar Q e_3))_3 \right](s,0) ds \ge \int_0^t \frac{C_T}{T-s} ds \ge C_T' |\log (T-t)|, \]
which implies \eqref{eqlowerbddu2}. 

\mbox{}

\noindent\underline{2. Control of pressure}. Since $u$ is divergence free, we take divergence onto both sides of Navier-Stokes equation in \eqref{equation: NS-KS velocity form} to obtain
\[
\div (u \cdot \nabla u) = \Delta \pi - \div(\rho e_3), \quad \Rightarrow \quad 
\na \pi = \na \Dein \div (u \cdot \nabla u) + \na \Dein \div(\rho e_3).
\]
We will prove \eqref{growth of body pressure nabla pi} by showing the following estimates for $t \in (0, T)$
\begin{align}
  \left\| \na \Dein \div (u \cdot \nabla u) \right\|_{L^\infty} 
&\le C (T-t)^{-1+\frac{1}{40}} |\log (T-t)|, \label{eqestpres1} \\
\left\| \na \Dein \div (\bar \e e_3) \right\|_{L^\infty} 
&\le C (T-t)^{-1+\frac{\delta_g}2},  \label{eqestpres2} \\
C^{-1} (T-t)^{-1} \le \left\| \na \Dein \div (\bar Q e_3) \right\|_{L^\infty} 
&\le C (T-t)^{-1}.  \label{eqestpres3}
\end{align}

For \eqref{eqestpres1}, we first apply $\| \na \Dein \div \|_{\dot H^1 \cap \dot H^2 \to L^\infty} \le C$ and  Gagliardo-Nirenberg inequality to see
\begin{align*}
    \Big\| \na \Dein \div (u \cdot \na u) \Big\|_{L^\infty}
    & \le C \| u \cdot \na u \|_{\dot H^1 \cap \dot H^2}
    \le C \| u \|_{L^\infty} \| u \|_{\dot H^1 \cap \dot H^3}.
\end{align*}
Then we recall the energy estimate \eqref{U:k+1 regularity} with $k=2$ reads as
\[
\frac{d}{d\tau} \frac{1}{2} \| U(\tau) \|_{\dot H^{3}}^2
	 \le -\frac{5}{4} \| U(\tau) \|_{\dot H^{3}}^2 
	+C \| U(\tau) \|_{\dot H^1 \cap \dot H^{3}}^\frac{22}{5}  + C \mu_0 e^{-\frac 12 \tau} \| U(\tau) \|_{\dot H^{3}}.
\]
Iterating this with the estimate \eqref{eqbddUebar} yields $\| U(\tau) \|_{\dot H^3} \le C e^{-\frac{11}{40 }\tau} = C (T-t)^{\frac{11}{40}}$. Noticing that $\| u \|_{\dot H^k} = (T-t)^{-\frac{2k-1}{4}} \| U(t)\|_{\dot H^k}$, this conclude \eqref{eqestpres1} by combining with \eqref{eqbddUebar} and \eqref{growth of u(t)}. 

Next, \eqref{eqestpres2} and the upper bound in \eqref{eqestpres3} follow from $\| \na \Dein \div \|_{\dot H^1 \cap \dot H^2 \to L^\infty} \le C$ and the $H^2$ boundedness of $Q$ and $\e$ \eqref{eqbddUebar}. Finally, we exploit the explicit formula \eqref{Q: Leray projection result} to see
\[
(\na \Dein \div (\bar Qe_3))_3(0)
= \frac{1}{T-t}(\partial_3 \Dein \partial_3 Q) (0)=\frac{2}{T-t}. 
\]
That implies the lower bound in \eqref{eqestpres3} and hence we have concluded the proof of \eqref{growth of body pressure nabla pi}.

\mbox{}

\vspace{0.5cm}

\noindent \textbf{Conflict of interest.} The authors declare that there is no conflict of interest regarding the publication of this paper.

\vspace{0.1cm}

\noindent \textbf{Data availability.} No new data were created or analyzed in this study. Data
sharing is not applicable to this article.

\normalem
\bibliographystyle{siam}
\bibliography{Bib-2}

\begin{thebibliography}{10}

\bibitem{MR2768550}
{\sc H.~Bahouri, J.-Y. Chemin, and R.~Danchin}, {\em Fourier analysis and
  nonlinear partial differential equations}, vol.~343 of Grundlehren der
  mathematischen Wissenschaften [Fundamental Principles of Mathematical
  Sciences], Springer, Heidelberg, 2011.

\bibitem{MR3730537}
{\sc J.~Bedrossian and S.~He}, {\em Suppression of blow-up in
  {P}atlak-{K}eller-{S}egel via shear flows}, SIAM J. Math. Anal., 49 (2017),
  pp.~4722--4766.

\bibitem{MR3892424}
\leavevmode\vrule height 2pt depth -1.6pt width 23pt, {\em Erratum:
  {S}uppression of blow-up in {P}atlak-{K}eller-{S}egel via shear flows
  [{MR}3730537]}, SIAM J. Math. Anal., 50 (2018), pp.~6365--6372.

\bibitem{MR4475666}
{\sc J.~Bedrossian and V.~Vicol}, {\em The mathematical analysis of the
  incompressible {E}uler and {N}avier-{S}tokes equations---an introduction},
  vol.~225 of Graduate Studies in Mathematics, American Mathematical Society,
  Providence, RI, [2022] \copyright 2022.

\bibitem{MR4201903}
{\sc P.~Biler}, {\em Singularities of solutions to chemotaxis systems}, vol.~6
  of De Gruyter Series in Mathematics and Life Sciences, De Gruyter, Berlin,
  [2020] \copyright 2020.

\bibitem{MR3411100}
{\sc P.~Biler, I.~Guerra, and G.~Karch}, {\em Large global-in-time solutions of
  the parabolic-parabolic {K}eller-{S}egel system on the plane}, Commun. Pure
  Appl. Anal., 14 (2015), pp.~2117--2126.

\bibitem{Blanchet_Dolbeault_Perthame_globalexistence06}
{\sc A.~Blanchet, J.~Dolbeault, and B.~Perthame}, {\em Two-dimensional
  {K}eller-{S}egel model: optimal critical mass and qualitative properties of
  the solutions}, Electron. J. Differential Equations,  (2006), pp.~No. 44, 32.

\bibitem{Brenner_Constantin_Leo_Schenkel_Venkataramani_steady_state_99}
{\sc M.~P. Brenner, P.~Constantin, L.~P. Kadanoff, A.~Schenkel, and S.~C.
  Venkataramani}, {\em Diffusion, attraction and collapse}, Nonlinearity, 12
  (1999), pp.~1071--1098.

\bibitem{MR2759829}
{\sc H.~Brezis}, {\em Functional analysis, {S}obolev spaces and partial
  differential equations}, Universitext, Springer, New York, 2011.

\bibitem{buckmaster2022smooth}
{\sc T.~Buckmaster, G.~Cao-Labora, and J.~G{\'o}mez-Serrano}, {\em {S}mooth
  imploding solutions for 3{D} compressible fluids}, arXiv preprint
  arXiv:2208.09445,  (2022).

\bibitem{buseghin2023existence}
{\sc F.~Buseghin, J.~Davila, M.~del Pino, and M.~Musso}, {\em {E}xistence of
  finite time blow-up in {K}eller-{S}egel system}, arXiv preprint
  arXiv:2312.01475,  (2023).

\bibitem{MR3932458}
{\sc J.~A. Carrillo, K.~Craig, and Y.~Yao}, {\em Aggregation-diffusion
  equations: dynamics, asymptotics, and singular limits}, in Active particles.
  {V}ol. 2. {A}dvances in theory, models, and applications, Model. Simul. Sci.
  Eng. Technol., Birkh\"{a}user/Springer, Cham, 2019, pp.~65--108.

\bibitem{Chen_Hou_Euler_blowup_theory_2023}
{\sc J.~Chen and T.~Y. Hou}, {\em Stable nearly self-similar blowup of the 2{D}
  {B}oussinesq and 3{D} {E}uler equations with smooth data {I}: {A}nalysis},
  preprint, arXiv:2210.07191,  (2023).

\bibitem{Chen_Hou_Euler_blowup_numerical_2023}
\leavevmode\vrule height 2pt depth -1.6pt width 23pt, {\em Stable nearly
  self-similar blowup of the 2{D} {B}oussinesq and 3{D} {E}uler equations with
  smooth data {II}: {R}igorous {N}umerics}, preprint, arXiv:2305.05660,
  (2023).

\bibitem{Collot_Ghoul_Masmoudi_Nguyen_2DtypeII_blowup22}
{\sc C.~Collot, T.-E. Ghoul, N.~Masmoudi, and V.~T. Nguyen}, {\em Refined
  description and stability for singular solutions of the 2{D} {K}eller-{S}egel
  system}, Comm. Pure Appl. Math., 75 (2022), pp.~1419--1516.

\bibitem{Collot_Ghoul_Masmoudi_Nguyen_3Dblowup_Collasping-ring_blowup23}
\leavevmode\vrule height 2pt depth -1.6pt width 23pt, {\em Collapsing-ring
  blowup solutions for the {K}eller-{S}egel system in three dimensions and
  higher}, J. Funct. Anal., 285 (2023), pp.~Paper No. 110065, 41.

\bibitem{MR3986939}
{\sc C.~Collot, P.~Rapha\"{e}l, and J.~Szeftel}, {\em On the stability of type
  {I} blow up for the energy super critical heat equation}, Mem. Amer. Math.
  Soc., 260 (2019), pp.~v+97.

\bibitem{MR2099126}
{\sc L.~Corrias, B.~Perthame, and H.~Zaag}, {\em Global solutions of some
  chemotaxis and angiogenesis systems in high space dimensions}, Milan J.
  Math., 72 (2004), pp.~1--28.

\bibitem{cui2024suppression}
{\sc S.~Cui, L.~Wang, and W.~Wang}, {\em {S}uppression of blow-up in the 3{D}
  {P}atlak-{K}eller-{S}egel-{N}avier-{S}tokes system via non-parallel shear
  flows}, arXiv preprint arXiv:2401.15982,  (2024).

\bibitem{Dolbeault_Perthame_globalexistence04}
{\sc J.~Dolbeault and B.~Perthame}, {\em Optimal critical mass in the
  two-dimensional {K}eller-{S}egel model in {$\Bbb R^2$}}, C. R. Math. Acad.
  Sci. Paris, 339 (2004), pp.~611--616.

\bibitem{MR2909934}
{\sc R.~Donninger and B.~Sch\"{o}rkhuber}, {\em Stable self-similar blow up for
  energy subcritical wave equations}, Dyn. Partial Differ. Equ., 9 (2012),
  pp.~63--87.

\bibitem{MR3537340}
\leavevmode\vrule height 2pt depth -1.6pt width 23pt, {\em On blowup in
  supercritical wave equations}, Comm. Math. Phys., 346 (2016), pp.~907--943.

\bibitem{MR4334974}
{\sc T.~Elgindi}, {\em Finite-time singularity formation for {$C^{1,\alpha}$}
  solutions to the incompressible {E}uler equations on {$\Bbb R^3$}}, Ann. of
  Math. (2), 194 (2021), pp.~647--727.

\bibitem{MR4445341}
{\sc T.~M. Elgindi, T.-E. Ghoul, and N.~Masmoudi}, {\em On the stability of
  self-similar blow-up for {$C^{1,\alpha}$} solutions to the incompressible
  {E}uler equations on {$\Bbb R^3$}}, Camb. J. Math., 9 (2021), pp.~1035--1075.

\bibitem{engel2000one}
{\sc K.-J. Engel and R.~Nagel}, {\em One-parameter semigroups for linear
  evolution equations}, vol.~194 of Graduate Texts in Mathematics,
  Springer-Verlag, New York, 2000.

\bibitem{Fuest_blowup_optimal_2021}
{\sc M.~Fuest}, {\em Approaching optimality in blow-up results for
  {K}eller-{S}egel systems with logistic-type dampening}, NoDEA Nonlinear
  Differential Equations Appl., 28 (2021), pp.~Paper No. 16, 17.

\bibitem{MR4685953}
{\sc I.~Glogi\'{c} and B.~Sch\"{o}rkhuber}, {\em Stable {S}ingularity
  {F}ormation for the {K}eller--{S}egel {S}ystem in {T}hree {D}imensions},
  Arch. Ration. Mech. Anal., 248 (2024), p.~4.

\bibitem{He_Gong_8pithresholdforNSKS}
{\sc Y.~Gong and S.~He}, {\em On the {$8\pi$}-critical-mass threshold of a
  {P}atlak-{K}eller-{S}egel-{N}avier-{S}tokes system}, SIAM J. Math. Anal., 53
  (2021), pp.~2925--2956.

\bibitem{MR3826109}
{\sc S.~He}, {\em Suppression of blow-up in parabolic-parabolic
  {P}atlak-{K}eller-{S}egel via strictly monotone shear flows}, Nonlinearity,
  31 (2018), pp.~3651--3688.

\bibitem{MR4612142}
\leavevmode\vrule height 2pt depth -1.6pt width 23pt, {\em Enhanced dissipation
  and blow-up suppression in a chemotaxis-fluid system}, SIAM J. Math. Anal.,
  55 (2023), pp.~2615--2643.

\bibitem{helffer2013spectral}
{\sc B.~Helffer}, {\em Spectral theory and its applications}, no.~139,
  Cambridge University Press, 2013.

\bibitem{MR1651769}
{\sc M.~A. Herrero, E.~Medina, and J.~J.~L. Vel\'azquez}, {\em Self-similar
  blow-up for a reaction-diffusion system}, J. Comput. Appl. Math., 97 (1998),
  pp.~99--119.

\bibitem{hillesdon1995development}
{\sc A.~Hillesdon, T.~Pedley, and J.~Kessler}, {\em The development of
  concentration gradients in a suspension of chemotactic bacteria}, Bulletin of
  mathematical biology, 57 (1995), pp.~299--344.

\bibitem{MR2013508}
{\sc D.~Horstmann}, {\em From 1970 until present: the {K}eller-{S}egel model in
  chemotaxis and its consequences. {I}}, Jahresber. Deutsch. Math.-Verein., 105
  (2003), pp.~103--165.

\bibitem{MR2073515}
\leavevmode\vrule height 2pt depth -1.6pt width 23pt, {\em From 1970 until
  present: the {K}eller-{S}egel model in chemotaxis and its consequences.
  {II}}, Jahresber. Deutsch. Math.-Verein., 106 (2004), pp.~51--69.

\bibitem{kevinSashastokesboussi}
{\sc Z.~Hu and A.~Kiselev}, {\em Suppression of chemotactic blowup by strong
  buoyancy in {Stokes}-{Boussinesq} flow with cold boundary}, J. Funct. Anal.,
  287 (2024), p.~46.
\newblock Id/No 110541.

\bibitem{hukiselevyao2023suppression}
{\sc Z.~Hu, A.~Kiselev, and Y.~Yao}, {\em {S}uppression of chemotactic
  singularity by buoyancy}, to appear in Geom. Funct. Anal.,  (2023).
\newblock Available at arXiv:2305.01036.

\bibitem{jiaSverakillposedness}
{\sc H.~Jia and V.~Sverak}, {\em Are the incompressible 3d {N}avier-{S}tokes
  equations locally ill-posed in the natural energy space?}, J. Funct. Anal.,
  268 (2015), pp.~3734--3766.

\bibitem{kim2022self}
{\sc J.~Kim}, {\em Self-similar blow up for energy supercritical semilinear
  wave equation}, arXiv preprint arXiv:2211.13699,  (2022).

\bibitem{Kiselevandxusuppresionof}
{\sc A.~Kiselev and X.~Xu}, {\em Suppression of chemotactic explosion by
  mixing}, Arch. Ration. Mech. Anal., 222 (2016), pp.~1077--1112.

\bibitem{MR3991564}
{\sc H.~Kozono, M.~Miura, and Y.~Sugiyama}, {\em Time global existence and
  finite time blow-up criterion for solutions to the {K}eller-{S}egel system
  coupled with the {N}avier-{S}tokes fluid}, J. Differential Equations, 267
  (2019), pp.~5410--5492.

\bibitem{li2023suppression}
{\sc H.~Li, Z.~Xiang, and X.~Xu}, {\em Suppression of blow-up in
  {P}atlak-{K}eller-{S}egel-{N}avier-{S}tokes system via the {P}oiseuille
  flow}, arXiv preprint arXiv:2312.01069,  (2023).

\bibitem{li2023stability}
{\sc Z.~Li}, {\em On stability of self-similar blowup for mass supercritical
  {NLS}}, arXiv preprint arXiv:2304.02078,  (2023).

\bibitem{MR2901320}
{\sc A.~Lorz}, {\em A coupled {K}eller-{S}egel-{S}tokes model: global existence
  for small initial data and blow-up delay}, Commun. Math. Sci., 10 (2012),
  pp.~555--574.

\bibitem{MR4359478}
{\sc F.~Merle, P.~Rapha\"{e}l, I.~Rodnianski, and J.~Szeftel}, {\em On blow up
  for the energy super critical defocusing nonlinear {S}chr\"{o}dinger
  equations}, Invent. Math., 227 (2022), pp.~247--413.

\bibitem{MR4445443}
\leavevmode\vrule height 2pt depth -1.6pt width 23pt, {\em On the implosion of
  a compressible fluid {II}: {S}ingularity formation}, Ann. of Math. (2), 196
  (2022), pp.~779--889.

\bibitem{MR2729284}
{\sc F.~Merle, P.~Rapha\"{e}l, and J.~Szeftel}, {\em Stable self-similar
  blow-up dynamics for slightly {$L^2$} super-critical {NLS} equations}, Geom.
  Funct. Anal., 20 (2010), pp.~1028--1071.

\bibitem{MR1427848}
{\sc F.~Merle and H.~Zaag}, {\em Stability of the blow-up profile for equations
  of the type {$u_t=\Delta u+|u|^{p-1}u$}}, Duke Math. J., 86 (1997),
  pp.~143--195.

\bibitem{MR1361006}
{\sc T.~Nagai}, {\em Blow-up of radially symmetric solutions to a chemotaxis
  system}, Adv. Math. Sci. Appl., 5 (1995), pp.~581--601.

\bibitem{Naito_Suzuki_typeIIblowup}
{\sc Y.~Naito and T.~Suzuki}, {\em Self-similarity in chemotaxis systems},
  Colloq. Math., 111 (2008), pp.~11--34.

\bibitem{nguyen2023construction}
{\sc V.~Nguyen, N.~Nouaili, and H.~Zaag}, {\em Construction of type {I}-{L}og
  blowup for the {K}eller-{S}egel system in dimensions $3$ and $4$}, arXiv
  preprint arXiv:2309.13932,  (2023).

\bibitem{MR3438649}
{\sc T.~Ogawa and H.~Wakui}, {\em Non-uniform bound and finite time blow up for
  solutions to a drift--diffusion equation in higher dimensions}, Anal. Appl.
  (Singap.), 14 (2016), pp.~145--183.

\bibitem{MR1145013}
{\sc T.~J. Pedley and J.~O. Kessler}, {\em Hydrodynamic phenomena in
  suspensions of swimming microorganisms}, in Annual review of fluid mechanics,
  {V}ol. 24, Annual Reviews, Palo Alto, CA, 1992, pp.~313--358.

\bibitem{Raphael_Schweyer_2DtypeII_blowup14}
{\sc P.~Rapha\"{e}l and R.~Schweyer}, {\em On the stability of critical
  chemotactic aggregation}, Math. Ann., 359 (2014), pp.~267--377.

\bibitem{MR3936129}
{\sc P.~Souplet and M.~Winkler}, {\em Blow-up profiles for the
  parabolic-elliptic {K}eller-{S}egel system in dimensions {$n\geq 3$}}, Comm.
  Math. Phys., 367 (2019), pp.~665--681.

\bibitem{tuval2005bacterial}
{\sc I.~Tuval, L.~Cisneros, C.~Dombrowski, C.~W. Wolgemuth, J.~O. Kessler, and
  R.~E. Goldstein}, {\em Bacterial swimming and oxygen transport near contact
  lines}, Proceedings of the National Academy of Sciences, 102 (2005),
  pp.~2277--2282.

\bibitem{winkler_blowup_passive_flow}
{\sc M.~Winkler}, {\em Can fluid interaction influence the critical mass for
  taxis-driven blow-up in bounded planar domains?}, Acta Appl. Math., 169
  (2020), pp.~577--591.

\bibitem{MR4222377}
{\sc L.~Zeng, Z.~Zhang, and R.~Zi}, {\em Suppression of blow-up in
  {P}atlak-{K}eller-{S}egel-{N}avier-{S}tokes system via the {C}ouette flow},
  J. Funct. Anal., 280 (2021), pp.~Paper No. 108967, 40.

\end{thebibliography}

\end{document}